\definecolor{linkred}{rgb}{0.7,0.2,0.2}
\definecolor{linkblue}{rgb}{0,0.2,0.6}
\numberwithin{figure}{section}
\DeclareFontFamily{OMS}{rsfs}{\skewchar\font'60}
\DeclareFontShape{OMS}{rsfs}{m}{n}{<-5>rsfs5 <5-7>rsfs7 <7->rsfs10 }{}
\DeclareSymbolFont{rsfs}{OMS}{rsfs}{m}{n}
\DeclareSymbolFontAlphabet{\scr}{rsfs}
\DeclareSymbolFontAlphabet{\scr}{rsfs}
\DeclareMathOperator{\Aut}{Aut}
\DeclareMathOperator{\codim}{codim}
\DeclareMathOperator{\Id}{Id}
\DeclareMathOperator{\Pic}{Pic}
\DeclareMathOperator{\rank}{rank}
\DeclareMathOperator{\reg}{reg}
\DeclareMathOperator{\sEnd}{\mathcal{E}\negthinspace \mathit{nd}}
\DeclareMathOperator{\sing}{sing}
\DeclareMathOperator{\supp}{supp}
\newcommand{\sE}{\scr{E}}
\newcommand{\sF}{\scr{F}}
\newcommand{\sG}{\scr{G}}
\newcommand{\sL}{\scr{L}}
\newcommand{\sO}{\scr{O}}
\newcommand{\sT}{\scr{T}}
\newcommand{\cA}{\mathcal A}
\newcommand{\cC}{\mathcal C}
\newcommand{\cD}{\mathcal D}
\newcommand{\cV}{\mathcal V}
\newcommand{\bB}{\mathbb{B}}
\newcommand{\bC}{\mathbb{C}}
\newcommand{\bD}{\mathbb{D}}
\newcommand{\bN}{\mathbb{N}}
\newcommand{\bP}{\mathbb{P}}
\newcommand{\bQ}{\mathbb{Q}}
\newcommand{\bR}{\mathbb{R}}
\newcommand{\bZ}{\mathbb{Z}}
\theoremstyle{plain}   
\newtheorem{thm}{Theorem}[section]
\newtheorem{cor}[thm]{Corollary}
\newtheorem{defn}[thm]{Definition} 
\newtheorem{lem}[thm]{Lemma}
\newtheorem{prop}[thm]{Proposition}
\theoremstyle{remark}
\newtheorem{assumption}[thm]{Assumption} 
\newtheorem{claim}[thm]{Claim}
\newtheorem{c-n-d}[thm]{Claim and Definition}
\newtheorem{construction}[thm]{Construction}
\newtheorem{example}[thm]{Example}
\newtheorem{notation}[thm]{Notation}
\newtheorem{rem}[thm]{Remark}
\newtheorem*{rem-nonumber}{Remark}
\newtheorem{warning}[thm]{Warning}
\numberwithin{equation}{thm}
\setlist[enumerate]{label=(\thethm.\arabic*), before={\setcounter{enumi}{\value{equation}}}, after={\setcounter{equation}{\value{enumi}}}}
\newcommand{\into}{\hookrightarrow}
\newcommand{\wtilde}{\widetilde}
\newcommand{\what}{\widehat}
\def\clap#1{\hbox to 0pt{\hss#1\hss}}
\def\mathclap{\mathpalette\mathclapinternal}
\def\mathclapinternal#1#2{%
\clap{$\mathsurround=0pt#1{#2}$}}
\newcommand\CounterStep{\addtocounter{thm}{1}\setcounter{equation}{0}}
\newcommand{\factor}[2]{\left. \raise 2pt\hbox{$#1$} \right/\hskip -2pt\raise -2pt\hbox{$#2$}}
\let\saveqed\qed
\renewcommand\qed{%
   \ifmmode\displaymath@qed
   \else\saveqed
   \fi}
\newcommand{\Publication}[1]{}
\newcommand{\subversionInfo}{}
\newcommand{\svnid}[1]{}
\newcommand{\approvals}[1]{}
\author{Daniel Greb}
\address{Daniel Greb, Essener Seminar für Algebraische Geometrie und Arithmetik, Fakultät für Mathematik, Universität Duisburg--Essen, 45117 Essen, Germany}
\email{\href{mailto:daniel.greb@uni-due.de}{daniel.greb@uni-due.de}}
\urladdr{\href{http://www.esaga.uni-due.de/daniel.greb}{http://www.esaga.uni-due.de/daniel.greb}}
\author{Stefan Kebekus}
\address{Stefan Kebekus, Mathematisches Institut, Albert-Ludwigs-Universität Freiburg, Eckerstraße 1, 79104 Freiburg im Breisgau, Germany}
\email{\href{mailto:stefan.kebekus@math.uni-freiburg.de}{stefan.kebekus@math.uni-freiburg.de}}
\urladdr{\href{http://home.mathematik.uni-freiburg.de/kebekus}{http://home.mathematik.uni-freiburg.de/kebekus}}
\thanks{Stefan Kebekus gratefully acknowledges the support through a joint
  fellowship of the Freiburg Institute of Advanced Studies (FRIAS) and the
  University of Strasbourg Institute for Advanced Study (USIAS). Daniel Greb was partially supported by the DFG-Collaborative Research
  Center SFB/TR 45 ``Periods, Moduli and Arithmetic of Algebraic Varieties''. Behrouz Taji
  was partially supported by the DFG-Graduiertenkolleg GK1821 ``Cohomological
  Methods in Geometry'' at Freiburg.}
\author{Behrouz Taji}
\address{Behrouz Taji, Mathematisches Institut, Albert-Ludwigs-Universität Freiburg, Eckerstraße 1, 79104 Freiburg im Breisgau, Germany}
\email{\href{mailto:behrouz.taji@math.uni-freiburg.de}{behrouz.taji@math.uni-freiburg.de}}
\urladdr{\href{http://home.mathematik.uni-freiburg.de/taji}{http://home.mathematik.uni-freiburg.de/taji}}
\keywords{Classification Theory, Uniformization, Ball Quotients, Minimal Models of General Type,
Miyaoka-Yau inequality, Higgs Sheaves, KLT Singularities, Canonical Models, Stability, Hyperbolicity, Flat Vector Bundles.}
\subjclass[2010]{32Q30, 14E05, 32Q26, 14E05, 14E20, 14E30, 53B10, 53C07, 14C15, 14C17, 14M05.}
\title{Uniformisation of higher-dimensional minimal varieties}
\date{\today}
\DeclareMathOperator{\Div}{Div}
\DeclareMathOperator{\GL}{GL}
\newcommand{\drefl}{d_{\operatorname{refl}}}
\newcommand{\pCVHS}{{\sf p$ℂ$VHS}{}}
\theoremstyle{remark}
\newtheorem{reminder}[thm]{Reminder}
\newtheorem*{goal}{Goal}
\begin{document}

\maketitle
\tableofcontents

%
% Do not edit the following line.  The text is automatically updated by
% subversion.
%
\svnid{$Id: 01.tex 125 2016-08-19 06:43:23Z kebekus $}

\section{Introduction}
\subversionInfo

\subsection{Uniformisation of Riemann surfaces}
\approvals{Behrouz & yes \\Daniel & yes \\Stefan & yes}

One of the main reasons for the beauty and elegance of Riemann surface theory is
the fact that there is a very short list of simply-connected Riemann surfaces,
given by the uniformisation theorem of Koebe and Poincaré.

\begin{thm}[Uniformisation theorem for Riemann surfaces, 1907]
  Let $X$ be a simply-connected Riemann surface.  Then, $X$ is biholomorphic to
  exactly one of the following Riemann surfaces:
  \begin{itemize}
  \item the Riemann sphere $\what{ℂ} = ℂ ∪ \{\infty\}$,
  \item the complex plane $ℂ$,
  \item the unit disk $\bD = \{z ∈ ℂ \mid |z| < 1\}$.
  \end{itemize}
\end{thm}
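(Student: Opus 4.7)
The plan is first to verify that the three model surfaces are pairwise non-biholomorphic and then, more substantively, to show that every simply-connected Riemann surface $X$ is biholomorphic to one of them. Distinguishing the three is immediate: only $\what{\bC}$ is compact, and any biholomorphism $\bC \to \bD$ would be a bounded entire function, hence constant by Liouville's theorem.

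For the existence of a model map, I would split along compactness. If $X$ is compact, then by simple connectedness $H_1(X,\bZ) = 0$, so the genus of $X$ is zero, and Riemann--Roch on a compact Riemann surface of genus zero produces a non-constant meromorphic function with exactly one simple pole. Such a function is easily checked to be a biholomorphism onto $\what{\bC}$.

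The bulk of the work is the non-compact case, for which I would use the potential-theoretic approach of Perron. Fix a base point $p \in X$. For each relatively compact, smoothly bounded domain $\Omega \Subset X$ containing $p$, solve the Dirichlet problem for the harmonic function on $\Omega \setminus \{p\}$ with logarithmic singularity $-\log|z-p|$ at $p$ and vanishing boundary values on $\partial \Omega$; call the resulting local Green's function $g_\Omega$. Exhaust $X$ by such $\Omega$ and pass to the supremum. Two cases arise: either the supremum is finite, in which case $X$ carries a genuine Green's function $g_p$ (the \emph{hyperbolic} case), or it is everywhere infinite (the \emph{parabolic} case), in which case one must instead extract an Evans kernel, a harmonic function on $X \setminus \{p\}$ with logarithmic pole at $p$ and growing along every exhaustion, via a more delicate normalised limiting argument.

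In both cases the construction is completed by passing to the conjugate harmonic function, whose existence on the simply-connected surface $X$ is guaranteed by the monodromy theorem. In the hyperbolic case, the map $f := e^{-(g_p + i h_p)}$ is a holomorphic map $X \to \bD$ with $f(p) = 0$; one verifies using the maximum principle and the local form of $g_p$ near $p$ that $f$ is injective, proper onto $\bD$, and of everywhere non-vanishing differential, hence a biholomorphism onto $\bD$. In the parabolic case an analogous construction built from the Evans function yields a biholomorphism $X \to \bC$. The chief analytic obstacle, and the reason the theorem is genuinely difficult, is the parabolic case: the Perron upper-envelope method directly produces the hyperbolic Green's function, but offers no canonical route to the Evans kernel, and one has to run a carefully normalised exhaustion argument to extract it and verify that the resulting globally defined holomorphic function exhausts all of $\bC$.
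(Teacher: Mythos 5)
The paper does not actually prove this statement: it is quoted as the classical Koebe--Poincaré theorem, with only a pointer to the historical survey \cite{MR1295591}, so there is no in-text argument to compare yours against. Judged on its own, your sketch is a correct outline of one of the standard proofs (the Perron/Green's-function route, as in Ahlfors--Sario or Forster). Three points would need more care in a full write-up. First, in the compact case, applying Riemann--Roch to an abstract compact Riemann surface presupposes the analytic existence theorems (finiteness of $H^1(X,\sO_X)$, equivalently the existence of non-constant meromorphic functions), which are non-trivial and are usually established by the same potential-theoretic machinery you deploy in the non-compact case. Second, the harmonic conjugate $h_p$ is constructed on $X \setminus \{p\}$, which is \emph{not} simply connected; the single-valuedness of $f = e^{-(g_p + i h_p)}$ follows not from the monodromy theorem on $X$ but from the fact that the only period of $dh_p$ is the one around $p$, equal to $2\pi$ by the normalisation of the logarithmic singularity. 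Third, and most substantively, the injectivity of $f$ in the hyperbolic case is a genuine theorem (typically proved by comparing $g_p$ with $g_q$ for a second point $q$, using the symmetry of the Green's function, or by a maximum-principle argument applied to $f$ composed with a disk automorphism); the phrase ``one verifies using the maximum principle'' compresses the hardest step of that half of the argument. With those caveats, the architecture --- trichotomy by compactness and by existence of a Green's function, then exponentiating a potential --- is the classical one and does lead to a complete proof.
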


The history of this result is rich, begins with the Riemann mapping theorem in
the 1850s, and involves many of the most important figures of mathematics at the
end of the $19$th and the beginning of the $20$th century.  It is surveyed for
example in \cite{MR1295591}.

One of the main consequences of the uniformisation theorem is the ``great
trichotomy'' seen in the geometry of \emph{compact} Riemann surfaces,
cf.~\cite{MR1957061}: they fall into three classes that can
be defined and characterised in topological\footnote{Euler characteristic
  positive, zero or negative; genus $0$, $1$ or $≥ 2$; fundamental group
  trivial, Abelian or nonabelian}, numerical\footnote{degree of canonical line
  bundle negative, zero or positive}, algebro-geometric\footnote{canonical line
  bundle anti-ample, trivial or ample}, and
differential-geometric\footnote{existence of a metric of constant positive, zero
  or positive curvature} terms.

Some of the equivalences contained in the previous enumeration can actually be
proven using the uniformisation theorem: The existence of constant curvature
metrics can be deduced from the existence of constant curvature metrics on the
universal covers that are invariant under the respective deck transformation
groups: the Fubini-Study metric on $\what{ℂ} = ℙ^1$, the standard flat metric on
$ℂ$, and the Poincaré metric on $\bD$.

One further aspect that will appear again later is the fact that while the
theorem of Gauß-Bonnet tells us that the integral over the curvature of
\emph{any} Riemannian metric on a compact Riemann surface $X$ equals $2-2g(X)$,
there always exists a \emph{distinguished} metric of constant curvature, whose
sign is dictated by the topology.  Moreover, we can determine the universal
cover of a given compact Riemann surface up to biholomorphism just by computing
the degree of the canonical bundle on the surface itself.

In some sense, the uniformisation theorem reduces the study of the geometry of
(compact) Riemann surfaces to the investigation of the $π_1$-equivariant
geometry of the universal cover.  The corresponding function theory is the study
of theta functions in the genus zero case, and of modular forms in the higher
genus case.  One result that can be obtained using the study of these special
functions is that every compact Riemann surface is in fact projective, see for
example \cite[Chapter~IX]{MR1328834}.

\subsection{Problems in higher dimensions}
\approvals{Behrouz & yes \\Daniel & yes \\Stefan & yes}

Moving on to higher dimensions, one quickly realises that a statement similar to
Koebe's and Poincaré's result is not possible, as many new phenomena appear.  In
some sense, there are just too many simply-connected complex manifolds in any
given dimension, as exemplified by the following.

\begin{itemize}
\item[a)] The only simply-connected compact Riemann surface is $ℙ^1$, defining
  the class of the trichotomy with negative canonical bundle.  On the other
  hand, Lefschetz' theorem implies that for example any smooth hypersurface of
  degree greater than or equal to five in $ℙ^3$ is simply-connected.  The
  canonical bundle of such a surface is ample and it therefore belongs to the
  opposite end of the spectrum.  At the same time, this yields non-trivial
  families of (compact) simply-connected non-biholomorphic manifolds, which also
  do not exist in dimension one.

\item[b)] Every complex manifold with ample anti-canonical bundle, i.e., every
  \emph{Fano manifold}, is rationally chain-connected by work of Campana
  \cite{Campana92} and Kollár-Miyaoka-Mori \cite{KMM92}, and hence
  simply-connected, see for example \cite[Thm.~3.5]{Campana91}.  Moreover, Fano
  manifolds of fixed dimension form a bounded family, see again \cite{KMM92}.
  While classification up to deformation was achieved in dimension three by
  Iskovskih \cite{MR463151, MR503430} and Mori-Mukai \cite{MR641971, MR1969009}
  at the beginning of the 1980's building on work of Shokurov \cite{MR534602},
  it is impossible in higher dimensions.

\item[c)] \emph{Calabi-Eckmann manifolds} are complex manifolds whose underlying
  real-differentiable manifold is isomorphic to the product $S^{2n+1}⨯ S^{2m+1}$
  of to odd-dimensional spheres, see \cite{MR0057539}.  They contain open
  subsets diffeomorphic to $ℝ^{2(n+m)+2}$ which do not admit any non-constant
  holomorphic function.  In particular, the corresponding complex structure on
  $ℝ^{2(n+m)+2}$ cannot be described using a single coordinate chart.

\item[d)] On the other hand, looking at basins of attractions for (the iteration
  of) certain holomorphic maps $f: ℂ^n → ℂ^n$, one finds open subsets of $ℂ^n$
  that are biholomorphic to $ℂ^n$, the so-called \emph{Fatou-Bieberbach
    domains}; see \cite[Chapter~6.3]{MR1747010}.  These examples stand in sharp
  contrast to the statement of the Riemann mapping theorem in dimension one.

\item[e)] The unit disk is the only Riemann surface in the list that admits
  non-constant bounded holomorphic functions.  In higher dimensions, there are
  many more examples: First shown by Poincaré, the unit ball
  $$
  𝔹^n = \bigl\{(z_1, \dots, z_n) ∈ ℂ^n \mid z_1^2 + \cdots + z_n^2 < 1 \bigr\}
  $$
  and the unit polydisk $\bD⨯ \cdots ⨯ \bD$ are not biholomorphically equivalent
  for $n≥ 2$, see for example \cite[Chapter~5, Proposition~4]{MR1324108}.  In
  fact, every bounded homogeneous domain is contractible by \cite{MR0158415} and
  hence simply-connected.  Bounded symmetric domains were classified by Cartan,
  see for example \cite[Chapter~X]{MR1834454}, but starting in dimension four
  not every bounded homogeneous domain is symmetric, as shown by a famous
  example of Piatetski-Shapiro, \cite{MR0252690}.  Starting in dimension $7$
  there are infinite families of bounded homogeneous domains that are not
  symmetric.

\item[f)] Given one simply-connected complex manifold $X$ of dimension greater
  than one, one can produce infinitely many new ones by blowing-up points in
  $X$.  Hence, to achieve some understanding one certainly has to impose some
  minimality condition.  This point will reappear in Section~\ref{subsect:MMP}.
\end{itemize}

One solution to the issue raised by the enumeration above is to modify the
question and ask for a characterisation of those compact complex
manifolds/smooth projective varieties whose universal cover is biholomorphic to
some fixed simply-connected model space having well-understood geometry.
Motivated by the uniformisation theorem for Riemann surfaces, in the following
discussion we hence concentrate on the following

\begin{goal}
  Characterise those compact complex manifolds whose universal cover is
  biholomorphic to $ℙ^n$, $ℂ^n$, or $𝔹^n$.
\end{goal}

In fact, as in dimension one, every holomorphic automorphism of $ℙ^n$ has a
fixed point, and so for this part one is left with the task of characterising
$ℙ^n$.  The techniques used in various approaches to this problem are mostly
based on studying rational curves and are hence different in spirit from the
other two cases.  We refer the reader to \cite{Mori79, MR577360, Wah83, AW01,
  CMSB02, Kebekus02b, MR2439607} and from
Section~\ref{subsect:deriving_conditions} onwards restrict ourselves to studying
quotients of $ℂ^n$ and $𝔹^n$.  We note that as in the case of Riemann surfaces,
the geometry of these manifolds can be studied using $π_1$-equivariant objects
on the universal cover, theta functions and automorphic forms.

\subsection{Metric characterisations}
\approvals{Behrouz & yes \\Daniel & yes \\Stefan & yes}

From the differential-geometric formulation of the great trichotomy, one derives
the idea that searching for special metrics is one approach to the
uniformisation problem also in higher dimensions.  And indeed, one has the
following result; cf.~\cite[Chapter~IX, Theorem.~7.9]{MR1393941}, where it is
credited to Hawley and Igusa:

\begin{thm}\label{thm:constantcurvature}
  If a projective manifold admits a Kähler metric of constant holomorphic
  sectional curvature, the universal cover of $X$ is biholomorphic to $ℙ^n$,
  $ℂ^n$, or $𝔹^n$ (depending on the sign of the curvature).
\end{thm}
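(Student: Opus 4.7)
The plan is to lift the given Kähler metric to the universal cover and invoke the classical classification of simply-connected complete Kähler manifolds of constant holomorphic sectional curvature.

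First, since $X$ is projective and hence compact, every Riemannian metric on $X$ is complete by Hopf–Rinow. Let $\pi \colon \wtilde{X} \to X$ denote the universal covering map, which is a local biholomorphism. Pulling back the given metric $g$ yields a Kähler metric $\wtilde{g} = \pi^* g$ on $\wtilde X$ which still has constant holomorphic sectional curvature $c$, because $\pi$ is a local isometry and the holomorphic sectional curvature is a local invariant. Completeness also lifts along covering maps (any unit-speed geodesic on $\wtilde X$ projects to a geodesic in $X$, which exists for all time). Thus $(\wtilde X, \wtilde g)$ is a simply-connected complete Kähler manifold of constant holomorphic sectional curvature $c$.

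Next I would put on the table the three standard model spaces of constant holomorphic sectional curvature, in each case a simply-connected complete Kähler manifold: $\bP^n$ with a Fubini–Study metric (curvature $>0$), $\bC^n$ with the Euclidean metric (curvature $=0$), and $\bB^n$ with a Bergman metric (curvature $<0$). By rescaling $\wtilde{g}$ by a positive constant if necessary, one may assume that the holomorphic sectional curvature of $\wtilde g$ coincides with that of the model whose sign matches $\operatorname{sign}(c)$.

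The theorem then follows from the classical uniqueness statement attributed in the excerpt to Hawley and Igusa and recorded in Kobayashi–Nomizu Vol.~II, Chapter~IX: any two simply-connected complete Kähler manifolds with the same constant holomorphic sectional curvature are holomorphically isometric. Applied to $(\wtilde X, \wtilde g)$ and the appropriate model, this gives the required biholomorphism. The main obstacle, if one wished to reprove this uniqueness statement from scratch rather than cite it, is the passage from the pointwise algebraic rigidity (constant holomorphic sectional curvature forces the full Kähler curvature tensor into a universal algebraic form) to a global holomorphic isometry. The standard way to bridge this gap is a Cartan–Ambrose–Hicks style development: completeness and simple connectedness let one parallel-transport frames along geodesics from a basepoint to construct a well-defined diffeomorphism with the model, and the coincidence of Kähler curvature tensors upgrades this diffeomorphism from a Riemannian isometry to a \emph{holomorphic} isometry.
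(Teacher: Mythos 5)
Your proposal is correct and follows essentially the same route as the paper, which states this result with no proof beyond the citation to Kobayashi--Nomizu, Vol.~II, Chapter~IX (Hawley, Igusa): the content of that reference is precisely the classification of simply-connected complete Kähler manifolds of constant holomorphic sectional curvature, and your argument is the standard reduction to it via lifting the metric to the universal cover. The details you supply (completeness from compactness, invariance of holomorphic sectional curvature under the local biholomorphism $\pi$, rescaling to match the model) are all sound.
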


However, determining whether a given projective manifold admits a Kähler metric
of constant curvature is a difficult task, and \emph{a priori} not an
algebro-geometric (or even topological) condition.

\subsection{Deriving necessary algebro-geometric conditions}
\label{subsect:deriving_conditions}
\approvals{Behrouz & yes \\Daniel & yes \\Stefan & yes}

Let $X$ be a projective manifold whose universal cover is biholomorphic to
$ℂ^n$.  It was conjectured by Iitaka and proven by Nakayama in dimension less
than or equal to three (and in all dimensions assuming the Abundance Conjecture)
that in this situation there exists an Abelian variety $A$ and a finite group
$G$ of fixed-point free holomorphic automorphisms of $A$ such that
$X \cong A/G$, \cite[Theorem~1.4]{MR1691481}.  Consequently, the tangent bundle
of $X$ is flat, and therefore we obtain the intersection-theoretic conditions
$c_1(X)=0 ∈ H²\bigl(X,\, ℝ\bigr)$ and $c_2(X)·[H]^{n-2}=0$, where $H$ is any
ample divisor on $X$.  In particular, we have
\begin{equation}\label{eq:MY=1}
  \Bigl(c_2(X)- \frac{n}{2(n+1)}·c_1^2(X) \Bigr)·[H]^{n-2} = 0.
\end{equation}

If $X$ is a projective manifold whose universal cover is biholomorphic to $𝔹^n$,
then the Bergman metric of $𝔹^n$, which has constant negative holomorphic
sectional curvature, is invariant under the deck transformation group.  It
induces a Kähler metric $g$ on $X$ whose associated $(1,1)$-form is the
curvature form of a metric in the canonical bundle of $X$, which is therefore
ample by Kodaira's theorem.  Note that ampleness can be detected using
intersection theory by the Nakai-Moishezon criterion.  Additionally, using the
fact that $g$ has constant holomorphic sectional curvature and that we can
compute the Chern classes of $X$ from $g$, or using the Hirzebruch
proportionality principle \cite[Appendix~1]{MR1335917} one concludes that
\begin{equation}\label{eq:MY=2}
  \Bigl( c_2(X)- \frac{n}{2(n+1)}·c_1^2(X) \Bigr)·[K_X]^{n-2} = 0.
\end{equation}

\subsection{The Miyaoka-Yau inequality and uniformisation for higher-dimensional manifolds}
\label{subsect:MYIsmooth}
\approvals{Behrouz & yes \\Daniel & yes \\Stefan & yes}

In fact, the two equations \eqref{eq:MY=1} and \eqref{eq:MY=2} represent the
extremal case of an inequality and they characterise exactly those projective
manifolds whose universal cover is isomorphic to $ℂ^n$ and $𝔹^n$ by the
following result of Yau \cite{MR0451180}.

\begin{thm}[Miyaoka-Yau inequality]\label{thm:YausTheorem}
  Let $X$ be an $n$-dimensional projective manifold whose canonical class is
  numerically trivial or ample, respectively.  Then, we have
 \begin{equation}\label{eq:BMYI}
   \Bigl(c_2(X)- \frac{n}{2(n+1)}·c_1^2(X)\Bigr)·[A]^{n-2} ≥ 0,
 \end{equation}
 where $A$ is either an arbitrary ample divisor on $X$ or equal to $K_X$,
 respectively.  We call \eqref{eq:BMYI} the \emph{Miyaoka-Yau inequality}.  In
 case of equality, the universal cover of $X$ is biholomorphic to $ℂ^n$ or
 $𝔹^n$, respectively.
\end{thm}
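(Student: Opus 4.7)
The plan is to produce a distinguished Kähler-Einstein metric on $X$ via the Aubin-Yau solution of the Calabi conjecture, to rewrite the Miyaoka-Yau combination as the integral of a pointwise non-negative expression in the curvature tensor, and to use Theorem~\ref{thm:constantcurvature} to handle the equality case. If $K_X \equiv 0$, then $c_1(X) = 0$ in $H^2(X,\bR)$ and, for any ample divisor $A$, Yau's theorem supplies a unique Ricci-flat Kähler metric $\omega$ with $[\omega] = [A]$. If $K_X$ is ample, Aubin and Yau produce a unique Kähler-Einstein metric $\omega$ with $\mathrm{Ric}(\omega) = -\omega$, so that $[\omega]$ is a positive multiple of $c_1(K_X)$. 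In both cases, Chern-Weil theory represents $c_1(X)$ and $c_2(X)$ by differential forms built algebraically from the curvature tensor $R$ of $\omega$, and $[A]^{n-2}$, respectively $[K_X]^{n-2}$, is represented by a positive constant times $\omega^{n-2}$.

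Step two is the pointwise curvature inequality. The intersection number
\[
\Bigl(c_2(X) - \frac{n}{2(n+1)} c_1^2(X)\Bigr) \cdot [A]^{n-2}
\]
becomes a positive multiple of $\int_X F(R)\,\omega^n$, where $F(R)$ is a purely algebraic quadratic function of the curvature tensor. The Einstein condition fixes the trace part of $R$, i.e.\ the Ricci tensor, as a multiple of $\omega$, so after substituting this in, $F(R)$ depends only on the traceless remainder $B$ of $R$, the Bochner tensor. A direct algebraic computation shows that the very specific coefficient $n/2(n+1)$ is precisely the one for which all cross terms involving the trace cancel and $F(R)$ reduces to a positive constant times the pointwise squared norm $|B|^2$. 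Integration over $X$ then yields the Miyaoka-Yau inequality \eqref{eq:BMYI}.

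For the equality case, if equality holds in \eqref{eq:BMYI}, then $B \equiv 0$ on $X$, so $\omega$ has the curvature tensor of a space of constant holomorphic sectional curvature at every point. Since this property, together with the Einstein hypothesis, forces $\omega$ to have globally constant holomorphic sectional curvature, Theorem~\ref{thm:constantcurvature} identifies the universal cover of $X$ with one of $ℙ^n$, $ℂ^n$, or $𝔹^n$. The sign of the curvature is dictated by the Einstein constant: it vanishes when $K_X \equiv 0$ and is negative when $K_X$ is ample, ruling out the $ℙ^n$ case and leaving $ℂ^n$ and $𝔹^n$ in the two respective scenarios.

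The main obstacle is the pointwise identity $F(R) = c\cdot|B|^2$ with $c > 0$. Writing down $F(R)$ from Chern-Weil theory is routine, but verifying that the precise linear combination $c_2 - \tfrac{n}{2(n+1)}\, c_1^2$ produces a manifestly non-negative integrand under the Einstein hypothesis requires careful bookkeeping of the symmetries of the Kähler curvature tensor and uses the Einstein normalisation in an essential way. This is the purely algebraic heart of Yau's 1977 argument, and it is also the step that cannot be generalised naively to singular $X$, where no smooth Einstein metric on $X$ itself is available.
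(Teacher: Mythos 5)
Your proposal follows the same route as the paper's own sketch: the Aubin--Yau Kähler--Einstein metric, a pointwise algebraic identity for the Chern--Weil representative of $c_2 - \tfrac{n}{2(n+1)}c_1^2$ that becomes manifestly non-negative under the Einstein condition, integration to get \eqref{eq:BMYI}, and in the equality case constant holomorphic sectional curvature followed by Theorem~\ref{thm:constantcurvature} with the sign fixed by the Einstein constant. The extra detail you give on the traceless (Bochner) part of the curvature is consistent with the computation the paper cites to Guggenheimer and to \cite[pp.~225f]{MR1777835}, so nothing further is needed.
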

\begin{proof}[Sketch of proof]
  The proof is based on Yau's solution of the Calabi conjecture, which in the
  situation at hand produces a Kähler-Einstein metric on $X$ whose associated
  $(1,1)$-form represents the class of $A$ in $H^{1,1}\bigl(X,\, ℝ\bigr)$.
  Using this metric to compute differential forms representing the Chern classes
  of the tangent bundle and exploiting the symmetries of the curvature tensor
  imposed by the Kähler-Einstein condition, one sees that the desired equality
  holds \emph{pointwise} for the chosen differential forms.  The inequality
  \eqref{eq:BMYI} follows by integration.  Yau credits this part of the argument
  to Guggenheimer.

  In case of equality, the pointwise computations done before yield enough
  restrictions on the metric to see that $X$ has constant holomorphic sectional
  curvature; the complete computation can for example be found in
  \cite[pp.~225f]{MR1777835}.  We conclude using
  Theorem~\ref{thm:constantcurvature}.
\end{proof}

Hence, after Yau's result, the question of the existence of a constant curvature
metric in a sense is intersection-theoretic, as it is \emph{a posteriori}
guaranteed by numerical triviality/ampleness of the canonical bundle and
equality in Theorem~\ref{thm:YausTheorem}.  The result is very close in spirit
to the one-dimensional case: again, one can \emph{a priori} use any metric on
$X$ to check whether the Chern classes of $X$ satisfy equality in
\eqref{eq:BMYI}.  However, if this is the case, there exists a distinguished
metric having constant curvature, whose sign again depends on the sign of the
canonical class.

Generalisations of the Miyaoka-Yau inequality, the question whether there is an
algebro-geometric proof, and the problem of uniformisation in case of equality
have attracted considerable interest in the last few decades, see
Section~\ref{subsect:earlierwork} for a discussion.  Here, we only mention that
one important approach to the problem that avoids the construction of
Kähler-Einstein metrics is based on results of Donaldson \cite{Donaldson85},
Uhlenbeck-Yau \cite{UhlenbeckYau86}, and Simpson \cite{MR944577} concerning the
existence of Hermitian Yang-Mills connections in stable holomorphic (Higgs)
bundles.  These metrics, although \emph{a priori} less directly connected to the
geometry of $X$, are then used to conclude that in case equality is attained in
\eqref{eq:BMYI} the tangent bundle is flat (in the numerically trivial case) or
that $\sT_X ⊕ \sO_X$ is projectively flat (in the case of ample canonical
bundle).  We will see later that this second approach generalises to the
singular setup in a natural way.

\subsection{Relation to the minimal model program}
\label{subsect:MMP}\approvals{Behrouz & yes \\Daniel & yes \\Stefan & yes}

In a sense, Theorem~\ref{thm:YausTheorem} gives a satisfactory answer to the
uniformisation question for projective manifolds in higher dimensions.  As it
can be applied to projective manifolds with numerically trivial or ample
canonical bundle, it is natural to look for a way of producing such varieties.
At this point the minimal model program comes into play.

Let $X$ be a projective $n$-dimensional manifold of Kodaira dimension $n$.  In
general, though the canonical divisor is rather positive, it will not be ample.
However, by \cite{BCHM10}, the variety $X$ admits a minimal model $X_{min}$ with
terminal singularities and nef canonical divisor, which is moreover semiample by
the basepoint-free theorem, \cite[Theorem~3.3]{KM98}.  The corresponding
morphism $\varphi: X_{min} → X_{can}$ maps $X_{min}$ birationally onto the
canonical model $X_{can}$ of $X$, which has canonical singularities and
\emph{ample canonical divisor}.  A variety with at worst terminal singularities
and nef canonical divisor will be called \emph{minimal},
cf.~Reminder~\vref{remi:cm}.

At least conjecturally, the picture is the same in the case of projective
manifolds $X$ of Kodaira dimension zero: we expect $X$ to have a minimal model
$X_{min}$ with terminal singularities and \emph{numerically trivial canonical
  divisor}, which then in fact is torsion, due to a theorem of Kawamata
\cite[Theorem~8.2]{Kawamata85}.

In both cases, the fact that we made the canonical divisor of $X$ to have
definite sign on the minimal/canonical model came at the cost of introducing
terminal/canonical singularities\footnote{In fact, for technical reasons it is
  very often necessary to work in the slightly bigger class of klt
  singularities.}.  As a result, Yau's Theorem cannot be applied to outcomes of
the minimal model program.  While existence of singular Kähler-Einstein
structures on varieties with klt singularities and trivial/ample canonical
bundle has been established in \cite{MR2505296}, the asymptotics of the metric
near the singularities is currently not understood well-enough to argue as in
the proof of Theorem~\ref{thm:YausTheorem} sketched above.

\subsection{The Miyaoka-Yau inequality and uniformisation for higher-dimensional minimal varieties: recent results}
\approvals{Behrouz & yes \\Daniel & yes \\Stefan & yes}

In order to formulate the main result discussed in this note, we start with the
following observations concerning the singularities of minimal and canonical
models that we have to deal with:

If $X$ is a variety with terminal singularities\footnote{for example the minimal
  model of a projective manifold of Kodaira dimension zero}, then the singular
locus of $X$ has codimension at least three.  When this is the case, we say that
$X$ is \emph{smooth in codimension two}.  As a consequence, the localisation
sequence for Chow groups, \cite[Chapter 1, Proposition~1.8]{Fulton98}, allows to
define first and second Chern classes of coherent sheaves, as in the
non-singular situation.  Furthermore, every $2$-dimensional klt singularity is
analytically equivalent to a quotient singularity $ℂ^2/G$, where $G$ is a finite
subgroup of $\GL_2(ℂ)$.  Consequently, for every klt variety $X$ there exists a
closed subvariety $Z$ of codimension at least three such that $X\setminus Z$ has
at worst quotient singularities.  In this case, we say that $X$ \emph{has
  quotient singularities in codimension two}.  This allows to define first and
second \emph{orbifold Chern classes}\footnote{also called ``$ℚ$-Chern classes''}
of reflexive sheaves, written as $\widehat{c}_1(\sE)$, $\widehat{c}_2(\sE)$.  In
particular, rational intersection numbers of $\widehat c_2(\sE)$ with
$(n-2)$-tuples of Cartier divisors exist.

We can now formulate the main results discussed in these notes.  These show that
the fundamental Chern class inequalities continue to hold in the singular
setting, characterise singular torus- and ball-quotients in terms of Chern
classes, and give purely numerical criteria to guarantee that a space with klt
singularities has in fact only quotient singularities.

\begin{thm}[\protect{Characterisation of singular quotients of Abelian varieties, cf.~\cite[Theorem~1.17]{GKP13}}]\label{thm:TQ}
  Let $X$ be a normal, complex, projective variety of dimension $n$ with at
  worst canonical singularities.  Assume that $X$ is smooth in codimension two
  and that the canonical divisor is numerically trivial, $K_X \equiv 0$.
  Further, assume that there exists an ample Cartier divisor $H ∈ \Div(X)$ such
  that $c_2(\sT_X) · [H]^{n-2} = 0$.  Then, there exists an Abelian variety $A$
  and a finite, surjective, Galois morphism $A → X$ that is étale in codimension
  two.
\end{thm}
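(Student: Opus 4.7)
The plan is to follow the analytic strategy sketched in Section~\ref{subsect:MYIsmooth} based on Hermite-Einstein structures on (Higgs) sheaves, adapted to the klt setting: deduce from the vanishing of $c_2(\sT_X) \cdot [H]^{n-2}$ that the reflexive tangent sheaf carries a flat structure, and then integrate that flatness to construct a finite Galois cover by an abelian variety that is étale in codimension two.

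First I would establish that the reflexive tangent sheaf $\sT_X$ is $H$-semistable. Its slope vanishes because $c_1(\sT_X) \cdot [H]^{n-1} = -K_X \cdot [H]^{n-1} = 0$; $H$-semistability in the singular setting should follow from a suitable extension of Miyaoka's semistability theorem for varieties with nef canonical divisor, the case $K_X \equiv 0$ being particularly clean. The hypotheses then say that the orbifold Bogomolov discriminant $\bigl(2(n+1)\widehat{c}_2(\sT_X) - n\widehat{c}_1(\sT_X)^2\bigr) \cdot [H]^{n-2}$ vanishes. Combining $H$-semistability with a Bogomolov-Gieseker inequality for reflexive sheaves on klt spaces---formulated via orbifold Chern classes, which are available because $X$ has quotient singularities in codimension two---should force $\sT_X$ to be $H$-polystable and moreover place us in the equality case of the Kobayashi-Lübke inequality.

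The analytic heart of the proof is then to invoke a Donaldson-Uhlenbeck-Yau-type correspondence for reflexive sheaves on klt varieties. Applied to the polystable tangent sheaf with vanishing discriminant, this should produce a Hermite-Einstein metric whose curvature on the locus $X_0 := X \setminus Z$ of codimension-two quotient singularities (with $\codim_X Z \geq 3$) is pointwise zero. In other words, $\sT_X|_{X_0}$ arises from a unitary representation $\rho \colon \pi_1(X_0) \to U(n)$, and passing to a finite-index subgroup containing $\ker \rho$ yields an étale Galois cover $\widetilde{X}_0 \to X_0$ on which the pulled-back tangent sheaf becomes trivial.

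To conclude, use Zariski-Nagata purity to extend this cover uniquely to a finite Galois morphism $A \to X$ that is étale in codimension two; then $A$ has at worst canonical singularities, satisfies $K_A \equiv 0$, and has trivial tangent sheaf on the complement of a set of codimension at least three. Standard arguments (a trivial reflexive tangent sheaf on a variety with rational singularities forces smoothness) show that $A$ is in fact smooth, hence a compact Kähler manifold with trivial tangent bundle and therefore a complex torus; projectivity is inherited from $X$, so $A$ is abelian. The step I expect to be the main obstacle is the singular Donaldson-Uhlenbeck-Yau correspondence: one must make sense of Hermite-Einstein metrics on reflexive sheaves across the singular locus and ensure that the Chern class bookkeeping is compatible with the orbifold computations used in the polystability step. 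This is precisely the point at which the Higgs-sheaf and orbifold framework developed elsewhere in the paper should enter the argument.
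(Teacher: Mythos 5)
Your proposal takes a genuinely different route from the paper, and as written it has two gaps that I would consider fatal rather than technical. The first is the singular Donaldson--Uhlenbeck--Yau correspondence that you yourself flag as ``the main obstacle'': a Hermite--Einstein theory for reflexive sheaves on klt spaces, with enough control of the metric near the singular set to conclude that $\sT_X|_{X_0}$ is given by a representation of $\pi_1(X_0)$, is precisely what is \emph{not} available, and the entire architecture of the paper's proof is designed to avoid it. The paper never does analysis on the singular space: it cuts down to a general complete intersection surface $S \subseteq X_{\reg}$ (possible because $X$ is smooth in codimension two), applies Flenner's restriction theorem and Simpson's \emph{smooth} correspondence \cite[Corollary~3.10]{MR1179076} to see that $\sT_X|_S$ is flat, transports the resulting representation through the Goresky--MacPherson isomorphism $\pi_1(S) \cong \pi_1(X_{\reg})$, and then identifies the resulting flat bundle with $\sT_{X_{\reg}}$ using boundedness of flat sheaves of fixed rank together with a Bertini-type theorem for isomorphism classes in bounded families \cite[Corollary~5.3]{GKP13}. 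You cannot simply cite a singular DUY theorem here; if you want to keep your outline you must replace that step by this cutting-down argument or prove the analytic input from scratch.

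The second gap is the passage from a flat unitary structure to a finite cover trivialising the tangent sheaf. You write that ``passing to a finite-index subgroup containing $\ker\rho$ yields an étale Galois cover $\widetilde X_0 \to X_0$ on which the pulled-back tangent sheaf becomes trivial''; this presupposes that $\img\rho \subseteq U(n)$ is \emph{finite}, which is not justified and is not known at this stage. Finiteness of holonomy in this context is a Bieberbach-type statement that requires a \emph{compact smooth} flat Kähler manifold; it fails as an abstract statement about unitary representations of $\pi_1$ of a non-compact open variety such as $X_0$. The paper's order of operations exists exactly to manufacture the compact smooth object first: one passes to a maximally quasi-étale cover $\gamma: \wtilde X \to X$ (Theorem~\ref{thm:gfa:2}, resting on Xu's theorem \cite{Xu12}), uses $\what\pi_1(\wtilde X_{\reg}) \cong \what\pi_1(\wtilde X)$ to extend the flat sheaf $\sT_{\wtilde X}|_{\wtilde X_{\reg}}$ to a flat, locally free sheaf on all of $\wtilde X$ (Theorem~\ref{thm:flat:2}), concludes that $\wtilde X$ is smooth via the Lipman--Zariski theorem (Theorem~\ref{thm:LZ:1}), and only then invokes Yau's Theorem~\ref{thm:YausTheorem} on the smooth projective variety $\wtilde X$ to realise it as an étale quotient of an abelian variety. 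Note also that extending the flat structure across the singularities is genuinely obstructed by local fundamental groups, which is why the quasi-étale cover cannot be dispensed with; your purity-plus-normalisation step produces \emph{a} finite cover, but the argument that its tangent sheaf is trivial and that it is smooth only goes through once the two gaps above are closed.
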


In other words, once the assumptions of Theorem~\ref{thm:TQ} are fulfilled for
$X$, we can realise it as the quotient of an Abelian variety by a finite group
whose fixed points lie in codimension three or higher.  In particular, in this
case $X$ has at worst quotient singularities.  In a sense, the map $A → X$
provides a \emph{singular uniformisation} of $X$, cf.~Nakayama's result
discussed in Section~\ref{subsect:deriving_conditions}.  Generalisations to klt
spaces have been obtained in \cite{LT14}.  The proof of Theorem~\ref{thm:TQ}
presented in Section~\ref{subsect:torusproof} uses the inequality
$c_2(\sT_Z) · [H]^{n-2} ≥ 0$, proven by Miyaoka \cite{Miyaoka87}, that holds for
any canonical variety $Z$ that is smooth in codimension two and whose canonical
divisor is numerically trivial.

\begin{thm}[\protect{$ℚ$-Miyaoka-Yau inequality, \cite[Theorem~1.1]{GKPT15}}]\label{thm:MYinequality}
  Let $X$ be an $n$-dimensional, projective, klt variety of general type whose
  canonical divisor $K_X$ is nef.  Then,
  \begin{equation}\label{eq:X2}
    \Bigl( 2(n+1)· \widehat{c}_2(\sT_X)-n·\widehat{c}_1(\sT_X)^2
    \Bigr)·[K_X]^{n-2}≥ 0.
  \end{equation}
\end{thm}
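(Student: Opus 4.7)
My plan is to adapt Simpson's Higgs-bundle proof of the smooth Miyaoka-Yau inequality to the klt setting via orbifold Chern classes. Concretely, I would apply a Bogomolov-Gieseker type inequality for Higgs-semistable reflexive sheaves to a carefully chosen rank-$(n+1)$ Higgs sheaf built out of $\sT_X$, normalised so that its first orbifold Chern class vanishes and its second orbifold Chern class, intersected with $[K_X]^{n-2}$, recovers exactly the Miyaoka-Yau discriminant of $\sT_X$.

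\textbf{Reduction and the Higgs sheaf.} I first reduce to the case where $K_X$ is ample by passing to the canonical model $X_{\mathrm{can}}$: by the basepoint-free theorem applied to the of-general-type, klt variety $X$ with $K_X$ nef, the canonical model is klt with ample canonical divisor and crepant-birational to $X$, leaving the orbifold intersection numbers in \eqref{eq:X2} invariant. So I may assume $K_X$ is ample. Writing $L := \sO_X\bigl(\tfrac{1}{n+1}K_X\bigr)$ as a $\bQ$-line bundle (pass to an integer tensor power throughout), consider the reflexive rank-$(n+1)$ sheaf
\[
  \sE \;:=\; (\sT_X \otimes L) \,\oplus\, L,
\]
together with the Higgs field $\theta\colon \sE \to \sE \otimes \Omega_X^{[1]}$ that vanishes on $\sT_X \otimes L$ and acts on the $L$-summand via the tautological element $\mathrm{id} \in \sT_X \otimes \Omega_X^{[1]}$; the integrability $\theta\wedge\theta=0$ is automatic, since $\theta^2 = 0$. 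By construction $\widehat c_1(\sE) = 0$, and a direct splitting-principle computation gives
\[
  2(n+1)\,\widehat c_2(\sE)\cdot [K_X]^{n-2}
  \;=\; \bigl(2(n+1)\,\widehat c_2(\sT_X) - n\,\widehat c_1(\sT_X)^2\bigr)\cdot [K_X]^{n-2},
\]
which is precisely the quantity whose non-negativity is claimed.

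\textbf{Semistability of $\sE$.} Next I argue that $(\sE,\theta)$ is Higgs-semistable with respect to $K_X$. Enoki's theorem, extended to klt varieties with $K_X$ nef and big through the reflexive-differentials technology of Greb-Kebekus-Kovács-Peternell, shows that $\Omega_X^{[1]}$, equivalently $\sT_X$, is $K_X$-semistable, hence so is $\sT_X \otimes L$. Given a $\theta$-invariant saturated reflexive subsheaf $\sF \subset \sE$, one of two things happens: either $\sF \subset \sT_X \otimes L$, in which case $\mu_{K_X}(\sF) \le \mu_{K_X}(\sT_X\otimes L) = 0 = \mu_{K_X}(\sE)$ by semistability of the twist; or the projection of $\sF$ to $L$ is non-zero, in which case $\theta$-invariance together with the tautological shape of $\theta$ forces $\sF$ to contain $\sT_X \otimes L$ as well, so $\sF = \sE$ by saturation. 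Either way $\mu_{K_X}(\sF) \leq \mu_{K_X}(\sE)$, as desired.

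\textbf{Main obstacle: orbifold Bogomolov-Gieseker for Higgs sheaves.} The technical heart of the argument is an orbifold Bogomolov-Gieseker inequality
\[
   \bigl(2r\,\widehat c_2(\sE) - (r-1)\,\widehat c_1(\sE)^2\bigr)\cdot [K_X]^{n-2} \;\geq\; 0
\]
for every $K_X$-semistable reflexive Higgs sheaf $\sE$ of rank $r$ on a klt projective variety $X$; applied to our $\sE$ with $\widehat c_1(\sE)=0$ this gives Theorem~\ref{thm:MYinequality}. In the smooth projective case this is Simpson's theorem, proved by constructing a Hermitian-Yang-Mills metric on the polystable graded pieces of $\sE$ and applying Chern-Weil calculus. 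The delicate point I expect to be hardest is that this analytic input does not obviously extend across the singular locus of $X$: the singular Kähler-Einstein metric constructed in \cite{MR2505296} has poorly understood behaviour near $X_{\mathrm{sing}}$, as already noted in Section~\ref{subsect:MMP}. I would proceed by passing to a global quasi-étale Galois cover $\tilde X \to X$ adapted to the codimension-two quotient singularities of $X$ — so that orbifold Chern intersection numbers on $X$ become ordinary Chern intersection numbers on $\tilde X$ divided by the covering degree — running Simpson's Hermitian-Yang-Mills machinery equivariantly on a suitable resolution of $\tilde X$, and controlling the exceptional and singular locus contributions through the reflexive-sheaf and orbifold Chern class formalism developed in the Greb-Kebekus-Peternell-Taji circle of ideas. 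Matching the analytic inequality on the smooth part with the algebraic orbifold bookkeeping on all of $X$ is the principal technical burden.
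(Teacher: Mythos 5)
Your overall architecture coincides with the paper's: a rank-$(n+1)$ Higgs sheaf built from the (co)tangent sheaf with the tautological Higgs field, whose Bogomolov--Gieseker discriminant computes the Miyaoka--Yau discriminant of $\sT_X$; stability deduced from the Enoki--Guenancia polystability of the tangent sheaf of a klt variety with ample canonical class; and Simpson's inequality for stable Higgs bundles as the ultimate input. Three local remarks. First, the paper works with the untwisted sheaf $Ω^{[1]}_X ⊕ \sO_X$, which avoids all $ℚ$-line-bundle bookkeeping, since the discriminant is invariant under twisting anyway. Second, $μ_{K_X}(\sT_X ⊗ L) = -K_X^n/(n(n+1))$, not $0$; your inequality survives, but the identity you state is false. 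Third, your reduction to ample $K_X$ via the canonical model is \emph{not} an invariance statement: the paper's own computation \eqref{eq:c2up} shows that the Miyaoka--Yau discriminant can strictly drop under $X → X_{can}$. The inequality happens to go in the direction you need, but establishing it is itself a non-trivial Shepherd-Barron--Wilson-type orbifold Chern class computation; the paper instead keeps $K_X$ nef and approximates it by ample divisors, which Simpson's theory tolerates.

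The genuine gap sits exactly where you place your ``main obstacle'', and your proposed way around it would not work. The key ingredient you are missing is the Mehta--Ramanathan-type Restriction Theorem~\ref{thm:restriction} for stable Higgs sheaves on klt pairs: the paper cuts $X$ by $n-2$ general members of $|m·K_X|$ to obtain a complete-intersection surface $S$ lying in the locus where $X$ has at worst quotient singularities (in the simplified setting, in $X_{\reg}$), shows that the restricted Higgs sheaf remains stable, and then applies Simpson's Bogomolov--Gieseker inequality \emph{on the surface $S$}, where it is already available. This entirely sidesteps any analysis near $X_{\sing}$. By contrast, your route --- a global quasi-étale Galois cover $\wtilde X → X$ on which orbifold Chern intersection numbers become ordinary ones --- requires $\wtilde X$ to be smooth in codimension two, and such a global cover does not exist in general: quotient singularities admit local smooth covers but no global one, which is precisely why the paper must invoke Mumford's $ℚ$-variety formalism with local, merely Cohen--Macaulay, global covers. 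Running Hermitian--Yang--Mills theory equivariantly on a resolution faces the further problems that the polarisation ceases to be ample and that stability is not preserved under pull-back to the resolution --- this is exactly the poorly-understood singular Kähler--Einstein territory you acknowledge. As written, the central inequality of your argument is therefore asserted rather than proved, and the restriction theorem is the idea needed to prove it.
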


\begin{thm}[\protect{Characterisation of singular ball quotients, \cite[Theorem~1.2]{GKPT15}}]\label{thm:BQ}
  Let $X$ be an $n$-dimensional minimal variety of general type.  If equality
  holds in the $ℚ$-Miyaoka-Yau inequality~\eqref{eq:X2}, then the canonical
  model $X_{can}$ is smooth in codimension two, there exists a ball quotient $Y$
  and a finite, Galois, quasi-étale morphism $f: Y → X_{can}$.  In particular,
  $X_{can}$ has only quotient singularities.
\end{thm}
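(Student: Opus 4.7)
The plan is to deduce the statement from the Higgs-theoretic proof of the $ℚ$-Miyaoka-Yau inequality in Theorem~\ref{thm:MYinequality}, in the spirit of the second approach to Theorem~\ref{thm:YausTheorem} mentioned at the end of Section~\ref{subsect:MYIsmooth}. As a first step, I would reduce to the canonical model $X_{can}$. Since $X$ is minimal and of general type, $K_X$ is semiample and the induced birational morphism $\varphi \colon X \to X_{can}$ satisfies $K_X = \varphi^* K_{X_{can}}$; this implies that equality in \eqref{eq:X2} transfers from $X$ to $X_{can}$ with $[K_X]^{n-2}$ replaced by $[K_{X_{can}}]^{n-2}$. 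From here on I would work on $X_{can}$, which is klt (and hence has quotient singularities in codimension two) and whose canonical class is ample.

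The second step is to extract a projective-flatness statement from equality. The proof of Theorem~\ref{thm:MYinequality} goes through the reflexive Higgs sheaf $\sE := \sT_{X_{can}} \oplus \sO_{X_{can}}$ together with its tautological Higgs field $\theta \colon \sE \to \sE \otimes \Omega^{1}_{X_{can}}$. After pulling back to a quasi-étale cover that trivialises the local orbifold structure, $\sE$ becomes an honest slope-stable Higgs bundle with respect to $K_{X_{can}}$, and the Bogomolov--Gieseker inequality for stable Higgs bundles (Simpson--Langer--Mochizuki) yields \eqref{eq:X2}. Equality forces the discriminant of $\sE$ to vanish, and the Donaldson--Uhlenbeck--Yau theorem in its Higgs version then produces a Hermite--Einstein metric making $\sE$ projectively flat; thus $\sE$ carries a projectively flat integrable connection on the smooth locus of (a suitable quasi-étale cover of) $X_{can}$.

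The main obstacle I anticipate is the upgrade from projective flatness to the statement that $X_{can}$ is smooth in codimension two. The idea is that the local contribution to the orbifold Chern number $\bigl( 2(n+1)\widehat{c}_2(\sT_{X_{can}}) - n\,\widehat{c}_1(\sT_{X_{can}})^{2} \bigr) \cdot [K_{X_{can}}]^{n-2}$ coming from a codimension-two klt singularity étale-locally modelled on $ℂ^2/G$, with $G \subset \GL_2(ℂ)$ finite, is strictly positive unless $G$ is trivial. Since equality is attained globally, every such local $G$ must be trivial, i.e., all codimension-two klt singularities of $X_{can}$ are in fact smooth points. Making this precise requires either an explicit comparison of Langer's orbifold discriminant with the usual one, or a local analysis of the projectively flat connection around such singularities and a comparison with the local ball-quotient model; the delicate point is that both the notion of stability and of $\widehat{c}_i$ have to be controlled through the codimension-two orbifold correction.

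Finally, the ball-quotient statement follows by integrating the projectively flat connection on $\sE$. Projective flatness produces a monodromy representation $\rho \colon \pi_1\bigl((X_{can})_{\reg}\bigr) \to \PSU(1,n)$, and the Higgs data assemble into a polarised complex variation of Hodge structure whose period map, on the universal cover of $(X_{can})_{\reg}$, is a $\rho$-equivariant developing map into the symmetric space $𝔹^n$ of $\PSU(1,n)$. Passing to the Galois closure $Y$ of the maximally quasi-étale cover of $X_{can}$ on which $\sE$ becomes actually flat, one shows that $Y$ is smooth and that the developing map induces a biholomorphism between the universal cover of $Y$ and $𝔹^n$; thus $Y$ is a ball quotient and $f \colon Y \to X_{can}$ is the desired finite Galois quasi-étale morphism. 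Since $Y$ is smooth and $f$ is Galois and quasi-étale, $X_{can}$ automatically has only quotient singularities.
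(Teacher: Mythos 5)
Your overall architecture matches the paper's: reduce to the canonical model, pass to a maximally quasi-étale cover, prove that the cover is smooth, and conclude. The central difficulty, however, is exactly the smoothness of the cover $Y$, and your proposal dispatches it with the phrase that on $Y$ ``$\sE$ becomes actually flat, one shows that $Y$ is smooth''. This does not work as stated. After the Miyaoka--Yau equality, the Higgs bundle $\sEnd(\sE)|_S$ on a general complete intersection surface $S$ is polystable with vanishing Chern classes and therefore corresponds to a representation of $\pi_1(S)$ --- but via the nonabelian Hodge correspondence, which changes the holomorphic structure. The flat \emph{holomorphic} bundle attached to that representation is not $\sEnd(\sE)|_S$, so the extension theorem for flat sheaves on maximally quasi-étale covers (Theorem~\ref{thm:flat:2}) extends the local system while giving no control over the reflexive sheaf $\sEnd(\sE)$ itself; in particular local freeness of $\sT_Y$ does not follow. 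This is precisely where the ball-quotient case diverges from the torus-quotient case, where $\sT_X|_S$ really is flat as a holomorphic bundle. The paper's resolution occupies all of Section~\ref{ssec:scpc3}: one checks via Theorem~\ref{thm:charPCVHS} that $\sEnd(\sE)|_S$ is induced by a \pCVHS{} (polystability from Guenancia's theorem plus the Restriction Theorem~\ref{thm:restriction}, vanishing Chern classes from the MY equality, and $\bC^*$-invariance because $\sE$ is a system of Hodge bundles), extends the \pCVHS{} to a resolution $\wtilde Y$ using Lefschetz and Corollary~\ref{cor:pfccs}, and then --- crucially --- descends the \emph{Hodge} bundle to a locally free sheaf on $Y$ itself via the factorisation of the period map through the rationally chain connected fibres of the resolution (Proposition~\ref{prop:higgsfromdownst}); a Bertini-type boundedness argument then identifies the descended sheaf with $\sEnd(\sE)$, so that Theorem~\ref{thm:LZ:1} applies. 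None of this is present in your outline, and no workable substitute is offered.

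Two further points. Your reduction to $X_{can}$ asserts that equality ``transfers'' along $\varphi: X \to X_{can}$; since $\sT_X$ is not the pullback of $\sT_{X_{can}}$, this is not a formal consequence of $K_X \sim_{\bQ} \varphi^* K_{X_{can}}$: one needs the Chern class comparison \eqref{eq:c2up} bounding the MY discriminant of $\sT_{X_{can}}$ from \emph{above} by that of $\sT_X$, combined with Theorem~\ref{thm:MYinequality} applied to $X_{can}$ to bound it from below by zero. Your local-positivity heuristic for codimension-two singularities is the right idea for smoothness in codimension two and matches the paper's orbifold Euler characteristic argument, but, as you acknowledge, it is not carried out. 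Finally, integrating a projectively flat connection and constructing a developing map into $\bB^n$ is a harder route than necessary for the last step: once $Y$ is known to be smooth with $K_Y$ ample and MY equality, the paper simply quotes Yau's Theorem~\ref{thm:YausTheorem}.
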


Here, a \emph{ball quotient} is a projective manifold whose universal cover is
the unit ball.  In fact, it can be shown that in the situation of
Theorem~\ref{thm:BQ}, the canonical model $X_{can}$ can be realised as the
quotient of $𝔹^n$ by a properly discontinuous action of $Γ=π_1(X_{can,reg})$
that is free in codimension two, cf.~\cite[Theorem~1.3]{GKPT15}.  The variety
$X_{can}$ in this sense admits a \emph{singular uniformisation} by the unit
ball.  This motivates the term \emph{singular ball quotients}.  We emphasise at
this point that the theory of automorphic forms does not require the discrete
group $Γ$ to act freely on the unit ball, and can therefore be applied to study
the geometry of $X_{can}=𝔹^n/Γ$, see for example \cite[Part II]{Kollar95s}.

Our approach to the proof of the above results is based on stability properties
of (Higgs) sheaves and is motivated by Simpson's approach to the uniformisation
problem alluded to at the end of Section~\ref{subsect:MYIsmooth}.  We generalise
flatness criteria and relevant results of nonabelian Hodge theory to the
singular setting.  In particular, we develop a theory of Higgs sheaves on
singular spaces.  We refer the reader to Section~\ref{subsect:outline} below,
where the contents of this article are described in detail.

\subsection{Earlier work}\label{subsect:earlierwork}
\approvals{Behrouz & yes \\Daniel & yes \\Stefan & yes}

Generalisations of the Miyaoka-Yau inequality and uniformisation in case of
equality have attracted considerable interest in the last few decades.

Inequality~\eqref{eq:BMYI} and the uniformisation result were extended to the
context of compact Kähler varieties with only quotient singularities by
Cheng-Yau~\cite{MR0833802} using orbifold Kähler-Einstein metrics.  Tsuji
established Inequality~\eqref{eq:BMYI} for \emph{smooth minimal models} of
general type in \cite{MR0976585}.  Enoki's result on the semistability of
tangent sheaf of minimal models, \cite{Eno87}, was used by Sugiyama
\cite{MR1145268} to establish the Bogomolov-Gieseker inequality for the tangent
sheaf of any resolution of a given minimal model of general type with only
canonical singularities, the polarisation given by the pullback of the canonical
bundle on the minimal model.  By using a strategy very similar to ours, that is
via results of Simpson~\cite{MR944577}, Langer in~\cite[Thm.~5.2]{MR1954067}
established the Miyaoka-Yau inequality in this context.  He recently also gave
the first purely algebraic proof of the Bogomolov inequality for semistable
Higgs sheaves on smooth projective varieties over fields of arbitrary
characteristic, cf.~\cite{MR3314517}.  A strong uniformisation result, together
with the Miyaoka-Yau inequality, was established by Kobayashi~\cite{MR0799669}
in the case of open orbifold surfaces.

After the work of Tsuji, the past few years have witnessed significant
developments in the theory of singular Kähler-Einstein metrics and Kähler-Ricci
flow.  These are evident, for example, in the works of Tian-Zhang
\cite{Tian-Zhang}, Eyssidieux-Guedj-Zeriahi \cite{MR2505296}, and Zhang
\cite{Zhang06}.  In particular, Inequality \eqref{eq:BMYI} together with
a uniformisation result for \emph{smooth minimal models} of general type have
been successfully established by Zhang~\cite{MR2497488}.

\subsection{Outline of the paper}
\label{subsect:outline}
\approvals{Behrouz & yes \\Daniel & yes \\Stefan & yes}

After introducing some basic notions and definitions in
Sections~\ref{sec:notation} and~\ref{sec:reflDiff}, an important construction is
recalled in Section~\ref{sec:qec}: Maximally quasi-étale covers of mildly
singular spaces over which global, flat, analytic shaves extend across the
singular locus.  Later on, in Sections~\ref{ssec:toriq} and~\ref{sec:bq}, these
covers turn out to be extremely useful for the uniformisation problems.  In
Section~\ref{sec:naht}, Simpson's work on nonabelian Hodge theory is briefly
recalled in a setting that is specifically useful for dealing with the
ball-quotient problem in Section~\ref{sec:bq}.  In Section~\ref{sec:Higgs} we
introduce the notion of Higgs sheaves over singular spaces and briefly discuss
their various fundamental properties.  The material of Sections~\ref{sec:naht}
and \ref{sec:Higgs} is used in Sections~\ref{sec:MY} and \ref{sec:bq}, where we
establish the Miyaoka-Yau inequality and uniformisation by the ball, so the
reader who is only interested in Theorem~\ref{thm:TQ} can safely skip them.

In Section~\ref{ssec:toriq} we work out the sketch of the proof of the
uniformisation by Euclidean space.  Section~\ref{sec:MY} is devoted to
establishing the Miyaoka-Yau inequality.  The main ingredients here are the
stability result of \cite{Guenancia} and the Restriction
Theorem~\ref{thm:restriction}.  The concluding Section~\ref{sec:bq} discusses
the proof of Theorem~\ref{thm:BQ}.

\subsection*{Acknowledgements}
\approvals{Behrouz & yes \\Daniel & yes \\Stefan & yes}

All three authors found the 2015 AMS Summer Research Institute exceptionally
fruitful.  They would like to thank the organisers for the invitation and the
opportunity to present their results.  The authors would also like to thank two
anonymous referees for helpful comments.

This overview article summarises the content of several research articles,
including \cite{GKP11, GKKP11, ExtApplications, GKP13, GKPT15}, which are joint
work with Thomas Peternell.  The results presented here are therefore not new.
The exposition frequently follows the original articles.  There exists some
overlap with \cite{KP14}.

%%% Local Variables:
%%% mode: latex
%%% TeX-master: "SLC"
%%% End:

%
% Do not edit the following line.  The text is automatically updated by
% subversion.
%
\svnid{$Id: 02-notation.tex 116 2016-08-18 12:12:37Z kebekus $}

\section{Notation}
\label{sec:notation}

\subsection{Global conventions}
\label{ssec:conventions}
\approvals{Behrouz & yes \\Daniel & yes \\Stefan & yes}

Throughout this paper, all schemes, varieties and morphisms will be defined over
the complex number field.  We follow the notation and conventions of
Hartshorne's book \cite{Ha77}.  In particular, varieties are always assumed to
be irreducible.  For all notation around Mori theory, such as klt spaces and klt
pairs, we refer the reader to \cite{KM98}.

\subsection{Varieties}
\label{ssec:defnVar}
\approvals{Behrouz & yes \\Daniel & yes \\Stefan & yes}

Once in a while, we need to switch between algebraic and analytic categories.
The following notation is then useful.

\begin{notation}[Complex space associated with a variety]
  Given a variety $X$, denote by $X^{an}$ the associated complex space, equipped
  with the Euclidean topology.  If $f : X → Y$ is any morphism of varieties or
  schemes, denote the induced map of complex spaces by
  $f^{an} : X^{an} → Y^{an}$.  If $\sF$ is any coherent sheaf of
  $\sO_X$-modules, denote the associated coherent analytic sheaf of
  $\sO_{X^{an}}$-modules by $\sF^{an}$.
\end{notation}

\begin{defn}[Minimal varieties]\label{def:minimal}
  A normal, projective variety $X$ is called \emph{minimal} if $X$ has at worst
  terminal singularities and if $K_X$ is nef.
\end{defn}

\begin{reminder}[Basepoint-free theorem and canonical models]\label{remi:cm}
  If $X$ is a projective, klt variety of general type whose canonical divisor
  $K_X$ is nef, the basepoint-free theorem asserts that $K_X$ is semiample,
  \cite[Theorem~3.3]{KM98}.  A sufficiently high multiple of $K_X$ thus defines
  a birational morphism $φ: X → Z$ to a normal projective variety with at worst
  klt singularities whose canonical divisor $K_{Z}$ is ample, cf.\
  \cite[Lemma~2.30]{KM98}.  There exists a $ℚ$-linear equivalence
  $K_X \sim_ℚ φ^* K_{Z}$.  If $X$ is a minimal variety of general type, then $Z$
  has at worst canonical singularities.  In this case, we set $Z = X_{can}$, and
  call it the \emph{canonical model} of $X$.
\end{reminder}

\begin{defn}[Ball quotient]\label{defn:BQ}
  A smooth projective variety $X$ of dimension $n$ is a \emph{ball quotient} if
  the universal cover of $X^{an}$ is biholomorphic to the unit ball
  $𝔹^n = \{ (z_1, \dots, z_n) ∈ ℂ^n \mid |z_1|^2 + \dots + |z_n|^2 < 1\}$.
  Equivalently, there exists a discrete subgroup $Γ < \Aut_\sO(𝔹^n)$ of the
  holomorphic automorphism group of $𝔹^n$ such that the action of $Γ$ on $𝔹^n$
  is cocompact and fixed-point free, and such that $X$ is isomorphic to $𝔹^n/Γ$.
\end{defn}

The following will be used for notational convenience.

\begin{notation}[Big and small subsets]
  Let $X$ be a normal, quasi-projective variety.  A closed subset $Z ⊂ X$ is
  called \emph{small} if $\codim_X Z ≥ 2$.  An open subset $U ⊆ X$ is called
  \emph{big} if $X \setminus U$ is small.
\end{notation}

Fundamental groups are basic objects in our arguments.  We will use the
following notation.

\begin{defn}[Fundamental group and étale fundamental group]
  If $X$ is a complex, quasi-projective variety, we set
  $π_1\bigl(X\bigr) := π_1\bigl(X^{an}\bigr)$, and call it the \emph{fundamental
    group of $X$}.  The étale fundamental group of $X$ will be denoted by
  $\what{π}_1\bigl(X\bigr)$.
\end{defn}

\begin{rem}
  Recall that $\what{π}_1(X)$ is isomorphic to the profinite completion of
  $π_1(X)$, cf.~\cite[§5 and references given there]{Milne80}.
\end{rem}

\subsection{Morphisms}
\approvals{Behrouz & yes \\Daniel & yes \\Stefan & yes}

Galois morphisms appear prominently in the literature, but their precise
definition is not consistent.  We will use the following definition, which does
not ask Galois morphisms to be étale.

\begin{defn}[Covers and covering maps, Galois morphisms]\label{def:cover}
  A \emph{cover} or \emph{covering map} is a finite, surjective morphism
  $γ : X → Y$ of normal, quasi-projective varieties.  The covering map $γ$ is
  called \emph{Galois} if there exists a finite group $G ⊂ \Aut(X)$ such that
  $γ$ is isomorphic to the quotient map.
\end{defn}

\begin{defn}[Quasi-étale morphisms]\label{defn:quasietale}
  A morphism $f : X → Y$ between normal varieties is called \emph{quasi-étale}
  if $f$ is of relative dimension zero and étale in codimension one.  In other
  words, $f$ is quasi-étale if $\dim X = \dim Y$ and if there exists a closed,
  subset $Z ⊆ X$ of codimension $\codim_X Z ≥ 2$ such that
  $f|_{X \setminus Z} : X \setminus Z → Y$ is étale.
\end{defn}

\subsection{Intersection and slope}
\approvals{Behrouz & yes \\Daniel & yes \\Stefan & yes}

Given a normal, $n$-dimensional projective variety $X$ and a Cartier divisor
$H ∈ \Div(X)$, we write $[H]$ for its numerical class, ditto with $ℚ$-Cartier
$ℚ$-divisors.  If $H$ is Cartier and $D$ is a Weil-divisor on $X$, there is a
well-defined intersection number between $D$ and $[H]^{n-1}$, which we denote by
$[D]·[H]^{n-1} ∈ ℤ$.  The construction is found in Fulton's book \cite{Fulton98}
and is reviewed in \cite[Section~2.6]{GKPT15}.  In particular, if $\sE$ is any
coherent sheaf, we can associated a Weil divisor to $\det \sE$ and compute its
intersection number with $[H]^{n-1}$.  The result of this operation is written
as $[\sE]·[H]^{n-1} ∈ ℤ$.

\begin{defn}[Slope with respect to a nef divisor]\label{def:slope2}
  Let $X$ be a normal, projective variety and $H$ be a nef $ℚ$-Cartier divisor
  on $X$.  If $\sE \ne 0$ is any torsion free, coherent sheaf on $X$, define the
  \emph{slope of $\sE$ with respect to $H$} as
  $$
  μ_H(\sE) := \frac{ [\sE]·[H]^{\dim X-1}}{\rank \sE}.
  $$
\end{defn}

%%% Local Variables:
%%% mode: latex
%%% TeX-master: "SLC"
%%% End:

\part{Techniques}
%
% Do not edit the following line.  The text is automatically updated by
% subversion.
%
\svnid{$Id: 03-ext.tex 116 2016-08-18 12:12:37Z kebekus $}

\section{Reflexive differentials}
\label{sec:reflDiff}
\subversionInfo
\approvals{Behrouz & yes \\Daniel & yes \\Stefan & yes}

Kähler differentials are among the most fundamental objects of algebraic
geometry.  Defined by universal properties, they behave will with respect to
pull-back and form a presheaf on the category of schemes.  Given a singular
space $X$ the sheaves $Ω^p_X$ of Kähler differentials are however generally hard
to deal with.  Even in the simplest of settings, these sheaves have torsion as
well as cotorsion; we refer the reader to the paper \cite{MR2915479} for a
discussion and for a series of elementary examples.

To obtain a more manageable sheaf, we will often consider the double dual of
$Ω^p_X$.  The resulting sheaf of \emph{reflexive differentials} is reflexive,
and thus much better behaved geometrically.  On the downside, reflexive
differentials can not possibly have the universal properties known from Kähler
differentials: since the latter are \emph{defined} by universal properties, any
other construction that satisfies the same universal properties necessarily
gives us the sheaf of Kähler differentials back!  Once we restrict ourselves to
spaces with klt singularities, however, there is more we can say.  It has been
observed in a series of papers by Greb-Kebekus-Kovács \cite{GKK08} and
Greb-Kebekus-Kovács-Peternell \cite{GKKP11} that reflexive differentials do have
some universal properties once we restrict ourselves to (morphisms between) klt
spaces.  This allows to study reflexive differentials in the context of the
minimal model program.  These results have been applied to a variety of
settings, including a study of hyperbolicity of moduli spaces, \cite{KK10}, a
partial generalisation of the Beauville--Bogomolov decomposition theorem
\cite{GKP11}, and deformations of Calabi--Yau varieties \cite{MR3380453}.

\subsection{Definitions and main results}
\approvals{Behrouz & yes \\Daniel & yes \\Stefan & yes}

We briefly recall the relevant definitions and results below.  Since reflexive
differentials have already been discussed in a few other surveys, we restrict
ourselves to the smallest amount of material required in our applications.
There are more general results for dlt and log canonical pairs, including the
existence of residue maps, for which we refer the reader to the references
listed in Section~\ref{sssec:pb-ref} below.

\begin{defn}[Reflexive differentials]
  Given a normal, complex variety $X$, a \emph{reflexive differential} on $X$ is
  a differential form defined only on the smooth locus, without imposing any
  boundary condition near the singularities.  Equivalently, a reflexive
  differential is a section in the double dual of the sheaf of Kähler
  differentials.  Denoting the sheaf of reflexive differentials by $Ω^{[p]}_X$,
  we have
  $$
  Ω^{[p]}_X = \bigl( Ω^p_X \bigr)^{**} = ι_* \bigl( Ω^p_{X_{\reg}} \bigr),
  $$
  where $ι: X_{\reg} → X$ denotes the inclusion of the smooth locus.  More
  generally, given a quasi-projective variety $X$ and a coherent sheaf $\sE$ on
  $X$, write
  $$
  Ω^{[p]}_X := \bigl(Ω^p_X \bigr)^{**}, \quad \sE^{[m]} := \bigl(\sE^{⊗ m}
  \bigr)^{**} \quad\text{and}\quad \det \sE := \bigl( Λ^{\rank \sE} \sE
  \bigr)^{**}.
  $$
  Given any morphism $f : Y → X$, write $f^{[*]} \sE := (f^* \sE)^{**}$, etc.
\end{defn}

The following result asserts the existence of a useful pull-back morphism for
reflexive differentials in the klt setting.

\begin{thm}[\protect{Existence of pull-back morphisms in general, \cite[Theorems~1.3 and 5.2]{MR3084424}}]\label{thm:main}
  Let $f : X → Y$ be any morphism between normal, complex varieties.  Assume
  that there exists a Weil divisor $D$ on $Y$ such that the pair $(Y,D)$ is klt.
  Then there exists a pull-back morphism
  $$
  \drefl f : f^* Ω^{[p]}_Y → Ω^{[p]}_X,
  $$
  uniquely determined by natural universal properties.  \qed
\end{thm}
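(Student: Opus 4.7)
The strategy is to use reflexivity of the target for uniqueness, and for existence to combine the standard Kähler pull-back on a log resolution of $(Y, D)$ with the Greb--Kebekus--Kovács--Peternell (GKKP) extension theorem, which is the place where the klt hypothesis enters decisively.

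\emph{Uniqueness.} The natural universal property demands that $\drefl f$ restrict, on the open subset $U ⊆ X$ where $f$ sends a smooth point to a smooth point of $Y \setminus \supp D$, to the ordinary Kähler pull-back $f^{*}Ω^{p}_{Y}|_{U} → Ω^{p}_{X}|_{U}$. Since the target $Ω^{[p]}_{X}$ is reflexive, any morphism into it is determined by its restriction to any big open subset of $X$. As $X$ is normal, $X_{\reg}$ is big, so uniqueness reduces to showing that the pull-back is forced also at the codimension-one points of $X$ where $f$ may be badly behaved.

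\emph{Existence; reduction to $X$ smooth.} Let $\pi_{X}\colon \widetilde{X} → X$ be a strong resolution and suppose the theorem has been established for $g := f \circ \pi_{X}$, yielding $\drefl g \colon g^{*} Ω^{[p]}_{Y} → Ω^{p}_{\widetilde{X}}$. Pushing forward via $\pi_{X}$ then produces a morphism $f^{*} Ω^{[p]}_{Y} → (\pi_{X})_{*} Ω^{p}_{\widetilde{X}}$, and the right-hand side embeds naturally into $Ω^{[p]}_{X}$: it is torsion-free, agrees with $Ω^{p}_{X}$ on the big open set $X_{\reg}$, and $Ω^{[p]}_{X}$ is its reflexive hull. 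Hence I may assume $X$ is smooth. Next, choose a strong log resolution $\pi_{Y}\colon \widetilde{Y} → Y$ of $(Y, D)$, and resolve the indeterminacies of the rational map $X \dashrightarrow \widetilde{Y}$ by a further modification $\sigma \colon \widehat{X} → X$, so that we obtain a regular morphism $\widehat{f}\colon \widehat{X} → \widetilde{Y}$. The GKKP extension theorem, applied to the klt pair $(Y, D)$, factors the Kähler pull-back on $Y_{\reg}$ through a morphism $\pi_{Y}^{[*]} Ω^{[p]}_{Y} → Ω^{p}_{\widetilde{Y}}(\log \widetilde{D})$, where $\widetilde{D}$ is the appropriate snc log divisor on $\widetilde{Y}$. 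Composing with the ordinary pull-back of logarithmic differentials along $\widehat{f}$ and pushing down by $\sigma$ produces a morphism $f^{*} Ω^{[p]}_{Y} → \sigma_{*} Ω^{p}_{\widehat{X}}(\log E')$ for a suitable snc divisor $E'$ on $\widehat{X}$. It remains to show that this morphism factors through $\sigma_{*} Ω^{p}_{\widehat{X}} = Ω^{p}_{X}$, i.e.\ that all logarithmic residues along the components of $E'$ vanish.

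\emph{Main obstacle.} Establishing this residue-vanishing is the heart of the argument. Residues along an exceptional component $E'_{i}$ give a well-defined map to $Ω^{p-1}_{E'_{i}}$, and the task is to show that the image of the composed morphism above has zero residue on every such $E'_{i}$. This is accomplished by a local, discrepancy-based argument of GKKP type, exploiting the fact that klt pairs have strictly greater-than-$(-1)$ discrepancies along every exceptional divisor to rule out non-trivial residues. The klt hypothesis on $(Y, D)$ is indispensable here; for merely log canonical pairs the pull-back would only exist as a logarithmic form. Once residue vanishing is established, independence of $\drefl f$ from the chosen resolutions follows from the uniqueness statement, and the stated universal property is automatic.
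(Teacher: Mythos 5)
There is a genuine gap, and it sits exactly at the point the paper itself flags as the surprising content of this theorem. Your entire construction presupposes that $f$ maps a dense open subset of $X$ into the log-smooth locus of $(Y,D)$: the uniqueness argument restricts to the open set $U \subseteq X$ of points mapping to $Y_{\reg} \setminus \supp D$, and the existence argument pulls back logarithmic forms along the lifted morphism $\widehat{f}\colon \widehat{X} \to \widetilde{Y}$. But the theorem makes no such assumption, and the survey explicitly emphasises that the relevant and difficult case is the one where the image of $f$ does \emph{not} meet $Y_{\reg}$ --- for instance $f\colon X = Y_{\sing} \to Y$ the inclusion (or normalisation) of the singular locus. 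In that situation your set $U$ is empty, so reflexivity of $Ω^{[p]}_X$ gives you no uniqueness; and $\widehat{f}(\widehat{X})$ is contained in the snc divisor $\widetilde{D}$ on $\widetilde{Y}$, so ``the ordinary pull-back of logarithmic differentials along $\widehat{f}$'' is simply not defined (a form with a pole along a divisor containing the image cannot be restricted). Your residue-vanishing step also conflates two different divisors: the log poles you acquire live on $\widehat{f}^{-1}(\widetilde{D})$, whose components need not be $\sigma$-exceptional, so pushing forward by $\sigma$ does not dispose of them, and the discrepancies of $(Y,D)$ do not directly control residues on $\widehat{X}$.

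For comparison: the survey gives no proof of this statement --- it is quoted from \cite[Theorems~1.3 and 5.2]{MR3084424} --- but it does record that the Extension Theorem~\ref{thm:extpb} is a main ingredient, and that the universal properties are formulated via agreement with the Kähler pull-back \emph{wherever this makes sense} together with functoriality under composition. You have correctly identified the extension theorem and the klt discrepancy condition as the decisive inputs, and your reduction to smooth $X$ via $(\pi_X)_* Ω^p_{\widetilde X} \hookrightarrow Ω^{[p]}_X$ is sound. What is missing is the mechanism that defines and pins down $\drefl f$ when $f$ factors through the singular or boundary locus; in the cited work this is what the functoriality-in-composition clause of the universal property and the bulk of the technical argument are for, and it cannot be recovered from restriction to a (possibly empty) open set where the naive Kähler pull-back exists.
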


\subsection{Discussion}
\approvals{Behrouz & yes \\Daniel & yes \\Stefan & yes}

The ``natural universal properties'' mentioned in Theorem~\ref{thm:main} are a
little awkward to formulate.  Precise statements are given in
\cite[Section~5.3]{MR3084424} .  In essence, it is required that the pull-back
morphisms agree with the pull-back of Kähler differentials wherever this makes
sense, and that pull-back is functorial in composition of morphisms.  The
following theorem, which appeared first, is thus a special case, but also forms
a main ingredient in the proof of Theorem~\ref{thm:main}.

\begin{thm}[\protect{Extension theorem, \cite[Theorem~1.4]{GKKP11}}]\label{thm:extpb}
  Let $Y$ be a normal variety and $f: X → Y$ a resolution of singularities.
  Assume that there exists a Weil $ℚ$-divisor $D$ on $Y$ such that the pair
  $(Y,D)$ is klt.  If
  $$
  σ ∈ H^0 \bigl( Y,\, Ω^{[p]}_Y \bigr) = H^0 \bigl( Y_{\reg},\,
  Ω^p_{Y_{\reg}} \bigr)
  $$
  is any reflexive differential form on $Y$, then there exists a differential
  form $τ ∈ H^0 \bigl( X,\, Ω^p_X \bigr)$ that agrees on the complement of the
  $f$-exceptional set with the usual pull-back of the Kähler differential
  $σ|_{Y_{\reg}}$.  \qed
\end{thm}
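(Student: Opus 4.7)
The plan is to prove that the holomorphic pull-back $\tau_0 := f^*\sigma$, a priori defined only on the open set $X \setminus \Exc(f)$, extends across the exceptional locus to a global section $\tau \in H^0\bigl(X,\, \Omega^p_X\bigr)$. By blowing up further if necessary I may assume that $f$ is a log resolution of $(Y,D)$, so that the reduced $f$-exceptional divisor $E = \bigcup E_i$, together with the strict transform of $D$, has simple normal crossing support. Because $\Omega^p_X$ is locally free on the smooth variety $X$, extension is a local question in codimension one: it suffices to show that $\tau_0$ has no pole along any component $E_i$ of $E$, and then Hartogs' principle yields the global extension.

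First I would reduce to the case $D = 0$ with $Y$ having only canonical singularities. By passing to a suitable Galois cover $\tilde{Y} \to Y$ that is étale in codimension one (an index-one or cyclic cover adapted to $D$), the pair becomes $(\tilde Y, 0)$ with $\tilde Y$ canonical, and reflexive differentials lift naturally along such quasi-étale morphisms; Galois invariance then transfers the extension statement between $Y$ and $\tilde Y$. Next, I would argue by induction on $n := \dim Y$. The base case $n = 2$ follows from the classical fact that canonical surface singularities are rational double points, hence quotient singularities $\bC^2/G$, so reflexive differentials extend by Chevalley-style averaging of $G$-invariant holomorphic forms on the smooth cover. For the inductive step, one cuts by a very general hyperplane section $H \subset Y$, noting that $(H,0)$ remains canonical by a Bertini argument, that $f|_{\tilde H} : \tilde H \to H$ is again a resolution, and that $\sigma|_{H_{\reg}}$ is a reflexive differential on $H$ to which the inductive hypothesis applies.

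The crux of the argument — and the principal technical obstacle — is to bridge the inductive datum on hyperplane sections with extension on $X$. A standard Deligne-type argument produces a canonical logarithmic extension $\tilde\tau \in H^0\bigl(X,\, \Omega^p_X(\log E)\bigr)$, and the remaining task is to show that the residue of $\tilde\tau$ along each exceptional component $E_i$ vanishes. This is the only place where the klt hypothesis genuinely enters, and it requires a Hodge-theoretic mechanism rather than a formal manipulation. Concretely, one would combine the numerical content of the canonical condition — namely, that the discrepancy divisor $K_X - f^*K_Y$ is effective on the support of $E$ — with a Kodaira-Akizuki-Nakano-Saito-type vanishing theorem applied to the mixed Hodge structure on the reducible divisor $E$, together with the compatibility of residues under restriction to $\tilde H$ supplied by the inductive hypothesis. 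Piecing these together forces the residues to be trivial and yields the desired holomorphic extension $\tau$. The hard part is precisely this residue vanishing; everything else amounts to reductions and bookkeeping of Galois actions.
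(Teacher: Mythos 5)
The paper does not actually prove Theorem~\ref{thm:extpb}; it quotes it from \cite{GKKP11} with a \qed, so your proposal has to be measured against the proof given there. Your opening moves are fine: passing to a log resolution, observing that since $\Omega^p_X$ is locally free the problem is to rule out poles along the finitely many components $E_i$ of the exceptional divisor, and then extending by normality. After that, however, several steps contain genuine gaps. The reduction to $D=0$ with $Y$ canonical via a quasi-étale cover fails: for a klt pair $(Y,D)$ the divisor $K_Y$ alone need not be $\bQ$-Cartier, and there are klt-type singularities with no nontrivial quasi-étale covers at all --- for instance the cone over $\bP^1\times\bP^1$ polarised by $\sO(1,2)$ has simply connected smooth locus and non-$\bQ$-Cartier canonical class, so it cannot be covered quasi-étalely by anything canonical. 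The hyperplane induction as you set it up also cannot reach the hardest case: if $f$ contracts a component $E_i$ to a \emph{point} $y\in Y$, a general member of a very ample system misses $y$, so $\tilde H$ is disjoint from $E_i$ and the inductive hypothesis carries no information about poles or residues along $E_i$; cutting with special hyperplanes through $y$ destroys the Bertini statement that $(H,0)$ stays klt. This is precisely the case that forces \cite{GKKP11} to bring in the relative minimal model program over $Y$ and Steenbrink-type vanishing for cohomology with supports along $E$.

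Two further points. The existence of the logarithmic extension $\tilde\tau\in H^0\bigl(X,\,\Omega^p_X(\log E)\bigr)$ is not ``a standard Deligne-type argument'': Deligne's canonical extension concerns flat connections, not pull-backs of reflexive forms, and proving that $f^*\sigma$ has at worst logarithmic poles is already the main content of the extension theorem for log canonical pairs in \cite{GKKP11}; the klt refinement is then that the residues vanish. So both halves of your ``log extension plus residue vanishing'' scheme are asserted rather than proved, and the ingredients you name (Kodaira--Akizuki--Nakano-type vanishing on the mixed Hodge structure of $E$, compatibility of residues with restriction) do not assemble into an argument without the dlt residue theory and the MMP-based reduction of the exceptional configuration that the original proof relies on. Finally, in the surface base case, averaging only produces the $G$-invariant form on $\bC^2$; extending that form across the exceptional locus of a resolution of $\bC^2/G$ is itself the content of the (known, but not formal) surface case rather than a consequence of Chevalley averaging.
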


It should be noted that Theorem~\ref{thm:main} does not require the image of $f$
to intersect the smooth locus of $Y_{\reg}$.  One particularly relevant setting
to which Theorem~\ref{thm:main} applies is that of a klt space $Y$, and the
inclusion (or normalisation) of the singular locus, say $f: X = Y_{\sing} → Y$.
It might seem surprising that a pull-back morphism exists in this context,
because reflexive differential forms on $Y$ are, by definition, differential
forms defined on the \emph{complement} of $Y_{\sing}$, and no boundary
conditions are imposed that would govern the behaviour of those forms near the
singularities.

\subsection{Immediate consequences}
\approvals{Behrouz & yes \\Daniel & yes \\Stefan & yes}

It had been known for a long time that the existence of a pull-back functor for
reflexive forms will give partial answers to the Lipman-Zariski conjecture.  The
following corollary is perhaps not obvious, but follows in fact rather quickly
using an argument going back to Steenbrink and van Straten.

\begin{thm}[\protect{The Lipman-Zariski conjecture for klt spaces, \cite[Theorem~6.1]{GKKP11}}]\label{thm:LZ:1}
  Let $X$ be a normal, projective, klt variety.  If the tangent sheaf $\sT_X$ is
  locally free, then $X$ is smooth.  \qed
\end{thm}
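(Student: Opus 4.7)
The plan is to argue by contradiction, following the classical approach of Steenbrink and van Straten, adapted to the klt setting via the extension theorem (Theorem~\ref{thm:extpb}) and the reflexive pullback of Theorem~\ref{thm:main}.

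First, I observe that if $\sT_X$ is locally free of rank $n = \dim X$, then so is its dual $\Omega_X^{[1]} = \sT_X^\vee$. Consequently the line bundle $\omega_X = \det \Omega_X^{[1]}$ is locally free, so $X$ is Gorenstein; combined with the klt hypothesis, this forces $X$ to have canonical singularities. The conclusion is local, so suppose for contradiction that $X$ is singular at some point $x_0$, and take a log resolution $\pi \colon \tilde X \to X$.

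Working in a neighbourhood $U$ of $x_0$ over which $\sT_X$ is free, I pick a frame $v_1,\ldots, v_n$ of $\sT_X|_U$ with dual frame $\omega_1,\ldots, \omega_n$ of $\Omega_X^{[1]}|_U$, and set $\tilde U := \pi^{-1}(U)$. By the extension theorem, each $\omega_i$ extends to a regular $1$-form $\tilde \omega_i \in H^0(\tilde U, \Omega_{\tilde U}^1)$ agreeing with $\pi^* \omega_i$ on $\pi^{-1}(U_{\reg})$; these $\tilde\omega_i$ are precisely the images of $\pi^* \omega_i$ under the pullback morphism $d\pi \colon \pi^* \Omega_X^{[1]} \to \Omega_{\tilde X}^1$ of Theorem~\ref{thm:main}. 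The decisive step is to show that this morphism is in fact an isomorphism on all of $\tilde U$. Granting this, its dual $\sT_{\tilde U} \to \pi^* \sT_U$ is an isomorphism of locally free sheaves, which makes $\pi$ étale on $\tilde U$; since $\pi$ is birational and $X$ is normal, Zariski's Main Theorem then yields that $\pi|_{\tilde U}$ is an isomorphism, contradicting the singularity of $X$ at $x_0$.

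The hardest part of the plan is establishing that $d\pi$ is an isomorphism over $\tilde U$, i.e., that the $\tilde\omega_i$ remain pointwise linearly independent over the exceptional locus. Following Steenbrink--van Straten, I would argue dually: the vector fields $v_i$ themselves lift to regular vector fields $\tilde v_i \in H^0(\tilde U, \sT_{\tilde X})$ (again by the extension theorem, applied in its dual guise), and the duality relation $\langle \tilde\omega_i, \tilde v_j\rangle = \delta_{ij}$ propagates from the big open $\pi^{-1}(U_{\reg})$ to all of $\tilde U$ because every sheaf in sight is reflexive. Ruling out vanishing of $\tilde\omega_1 \wedge \cdots \wedge \tilde\omega_n$ along any exceptional component amounts to ruling out a strictly positive discrepancy, and this is the subtle point where the klt/canonical Gorenstein structure of $X$, together with the precise content of the extension theorem, has to be exploited carefully.
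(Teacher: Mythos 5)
Your overall architecture is the right one, and it is the one the paper has in mind: the survey does not prove Theorem~\ref{thm:LZ:1} at all, but merely cites \cite[Theorem~6.1]{GKKP11} and \cite[Section~6]{KP14} with the remark that the statement follows from the Extension Theorem~\ref{thm:extpb} by the argument of Steenbrink and van Straten. Your reduction to the Gorenstein canonical case, the choice of a frame $v_1,\dots,v_n$ with dual coframe $\omega_1,\dots,\omega_n$, the extension to $\widetilde\omega_i$ on a log resolution, and the endgame (coframe everywhere $\Rightarrow$ $\pi$ unramified $\Rightarrow$ isomorphism by Zariski's Main Theorem) all match the intended proof. The problem is that you have not actually proved the step you yourself identify as decisive, and both mechanisms you gesture at for it are flawed.

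First, the Extension Theorem has no ``dual guise'': it is a statement about differential forms whose proof rests on vanishing theorems, and it does not dualise to the assertion that vector fields lift to a resolution, i.e.\ that $\pi_*\sT_{\widetilde X}=\sT_X$. That lifting statement fails for general normal varieties and is not something you may invoke for free here; if it were available, the relation $\langle\widetilde\omega_i,\widetilde v_j\rangle=\delta_{ij}$ would indeed finish the proof on the spot, which shows that you have silently assumed something at least as strong as the conclusion of the hard step. Second, your reformulation via discrepancies is correct as a bookkeeping identity --- the divisor of $\widetilde\omega_1\wedge\cdots\wedge\widetilde\omega_n$ is $K_{\widetilde X}-\pi^*K_X$ --- but it leads nowhere: canonical Gorenstein singularities routinely have strictly positive discrepancies (the ordinary double point in dimension three has discrepancy $1$ on its blow-up), so there is no general principle ruling them out; any such vanishing must be \emph{deduced} from the freeness of $\sT_X$, which is precisely the content of the missing step. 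The actual Steenbrink--van Straten mechanism is different from either suggestion: one exploits the identity $d(\pi^*f)=\sum_j\pi^*(v_j f)\,\widetilde\omega_j$ for functions $f$ on $X$, pairs it against tangent vectors of the positive-dimensional fibres of $\pi$ over singular points (on which $\pi^*f$ is constant), and derives the pointwise independence of the $\widetilde\omega_i$ from the fact that the $v_j$ form a frame of $\sT_X$ and hence separate directions at the singular point. Your proposal never uses the positive-dimensionality of these fibres before the final appeal to Zariski's Main Theorem, so the contradiction has no source. As written, the argument is a correct outline with the central lemma left unproved and mis-signposted.
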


We refer the reader to \cite[Section~6]{KP14}, which sketches a proof of
Theorem~\ref{thm:LZ:1} as a consequence of Theorem~\ref{thm:extpb}.  There are
generalisations as well as newer proofs that do not rely on the extension
theorem; cf.~\cite{Druel13a, MR3247804, MR3278896, MR3343876}.

\subsection{References}
\label{sssec:pb-ref}
\approvals{Behrouz & yes \\Daniel & yes \\Stefan & yes}

The universal properties of reflexive differentials on klt and log canonical
spaces were first established in the papers \cite{GKK08, GKKP11}.  The
formulation presented here comes from the subsequent paper \cite{MR3084424}.
The interested reader will definitively also want to look at \cite{MR3272910}
for a different take on the same circle of ideas.  The papers
\cite{ExtApplications, GKP11} as well as the surveys \cite{Keb13a, KP14} discuss
reflexive differentials and their applications in greater detail, see
\cite{Huber15} for a different perspective.  Kollár's book on the singularities
of the minimal model program also reviews the basic results,
\cite[Section~8.5]{MR3057950}.

%%% Local Variables:
%%% mode: latex
%%% TeX-master: "SLC"
%%% End:

%
% Do not edit the following line.  The text is automatically updated by
% subversion.
%
\svnid{$Id: 04-gfa.tex 110 2016-08-18 11:27:16Z kebekus $}

\section{Existence of maximally quasi-étale covers}
\label{sec:qec}
\subversionInfo
\approvals{Behrouz & yes \\Daniel & yes \\Stefan & yes}

Representations of fundamental groups feature prominently in nonabelian Hodge
theory, and are one of the recurring themes in this survey,
cf.~Section~\ref{ssec:nahc} below.  If $X$ is smooth, projective and of
dimension $n := \dim X ≥ 3$, the classical Lefschetz hyperplane theorem allows
to reduce complexity by cutting down.  If $\sL ∈ \Pic(X)$ is very ample and
$H_1, …, H_{n-2} ∈ |\sL|$ are general hyperplanes with associated complete
intersection $S := H_1 ∩ \cdots ∩ H_{n-2}$, it asserts that the group morphism
induced by the inclusion, $π_1(S) → π_1(X)$, is isomorphic.  We refer to
\cite[Theorem~3.1.21]{Laz04-I} for a discussion.

The situation is substantially more involved when $X$ is singular, even in the
simple case where $X$ has isolated singularities, or somewhat more general,
where $X$ is smooth in codimension two ---this will be our most relevant
setting, since spaces with terminal singularities always have this property.
Under these assumptions, the general complete intersection surface $S$ is still
smooth and contained in the smooth locus $X_{\reg}$, but the appropriate
generalisation of the Lefschetz hyperplane theorem, \cite[Theorem in
Section~II.1.2]{GoreskyMacPherson}, only gives an isomorphism between $π_1(S)$
and $π_1(X_{\reg})$, rather than between $π_1(S)$ and $π_1(X)$.

In summary, we see that to use the cutting-down method successfully, we need to
compare $π_1(X_{\reg})$ and $π_1(X)$.  Since we are chiefly interested in
\emph{representations of fundamental groups} rather than fundamental groups
themselves, the following theorem of Grothendieck simplifies the problem
somewhat.

\begin{thm}[\protect{Profinite completions dictate representations, \cite[Theorem~1.2b]{MR0262386}}]\label{thm:gro1}
  Let $α : G → H$ be a morphism of between finitely generated groups, and let
  $α^* : \operatorname{Rep}_{ℂ}(H) → \operatorname{Rep}_{ℂ}(G)$ be the
  associated pull-back functor of finite-dimensional representation.  If the
  associated morphism $\what{α} : \what{G} → \what{H}$ between profinite
  completions is bijective, then $α^*$ induces an equivalence of categories.
  \qed
\end{thm}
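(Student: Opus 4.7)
The plan is to follow Grothendieck's strategy and verify the three conditions for $\alpha^*$ to be an equivalence of categories---faithfulness, fullness, and essential surjectivity---by reducing each statement to assertions about representations with finite image, which are controlled by the profinite completion by construction. The guiding principle is that every finite-dimensional complex representation of a finitely generated group is determined, up to isomorphism, by its family of finite-image quotients.

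First, since $G$ is finitely generated, any representation $\rho \colon G → \GL_n(ℂ)$ has image inside $\GL_n(A)$ for some finitely generated $ℤ$-subalgebra $A ⊂ ℂ$, namely the ring generated by the matrix entries of the images of a finite generating set. I would then invoke the classical residual finiteness of $\GL_n(A)$ going back to Mal'cev: by the Nullstellensatz for Jacobson rings, maximal ideals $\mathfrak{m} ⊂ A$ have finite residue fields, and the Krull intersection theorem gives an embedding of $\GL_n(A)$ into $\prod_{\mathfrak{m},k} \GL_n(A/\mathfrak{m}^k)$. Consequently $\rho$ is determined up to isomorphism by the system of finite-image representations $\rho_{\mathfrak{m},k} \colon G → \GL_n(A/\mathfrak{m}^k)$, each of which factors through $\what{G}$ and, by the hypothesis that $\what{\alpha}$ is an isomorphism, through $\what{H}$ in a unique way.

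Faithfulness and fullness of $\alpha^*$ should then fall out of this determination by finite quotients: two representations of $H$, as well as any morphism between them, are captured by their compatible family of finite-image pieces, and these families are transported bijectively by $\what{\alpha}$. The main obstacle, and the step I expect to be hardest, is essential surjectivity. Starting from $\rho \colon G → \GL_n(ℂ)$, the recipe above produces via $\what{\alpha}^{-1}$ a compatible system of finite-image representations of $H$ with values in the various $\GL_n(A/\mathfrak{m}^k)$, and one must promote this system to an honest representation $\tilde{\rho} \colon H → \GL_n(ℂ)$. The argument requires a descent: one considers the Zariski closure $\Gamma$ of the image of $\rho$ as an algebraic subgroup of $\GL_{n,A}$, checks that the compatible system of $\what{A}_{\mathfrak{m}}$-valued points carves out the same $\Gamma$ over $H$, and then uses flatness of $A → \prod_{\mathfrak{m}} \what{A}_{\mathfrak{m}}$ together with a lifting argument to descend to an actual $A$-point, hence to a complex representation realising $\rho$ after composition with $\alpha$.

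I would close by emphasising that the finite generation hypothesis on both $G$ and $H$ is essential, entering precisely in the spreading-out step that confines everything inside $\GL_n(A)$ for a single finitely generated ring $A$; the theorem is known to fail for groups that are not finitely generated, as then the comparison between complex representations and systems of finite quotients breaks down.
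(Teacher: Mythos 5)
A preliminary remark on the comparison you were asked to make: the paper offers no proof of this statement at all --- it is imported verbatim from Grothendieck's article and closed immediately --- so the only benchmark is Grothendieck's original argument, whose toolkit you have correctly identified: spreading out over a finitely generated $\bZ$-subalgebra $A \subset \bC$, Mal'cev-type residual finiteness via the finite congruence quotients $A/\mathfrak{m}^k$, and the Krull--Jacobson fact that $\bigcap_{\mathfrak{m},k}\mathfrak{m}^k=0$ in such a domain. Within that framework your treatment of full faithfulness is essentially right once made precise: faithfulness is automatic (a morphism of $H$-representations and its image under $\alpha^*$ are literally the same linear map), and for fullness one spreads out $E$, $F$ and a $G$-equivariant map $f$ over a common $A$, uses surjectivity of $\what{\alpha}$ to see that $G$ and $H$ have the same image in every finite quotient $\GL_n(A/\mathfrak{m}^k)$, and concludes that the entries of $f\rho_E(h)-\rho_F(h)f$ lie in $\bigcap\mathfrak{m}^k=0$. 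Do be careful with your ``guiding principle'', though: a complex representation is \emph{not} determined up to isomorphism by the abstract family of its finite-image quotients (for $G=\bZ$ the characters $1\mapsto t$, with $t$ ranging over transcendental numbers, all have ``the same'' finite quotients); what is true, and what the proof actually uses, is the relative statement for data already defined over one fixed finitely generated ring.

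The genuine gap is in essential surjectivity. Your compatible system does assemble into a homomorphism $\sigma: H \to \GL_n(B)$, $B=\prod_{\mathfrak{m}}\what{A}_{\mathfrak{m}}$, with $\sigma\circ\alpha=\rho$, but the proposed final step --- flatness of $A\to B$ plus ``a lifting argument to descend to an actual $A$-point'' --- does not work: faithfully flat descent descends objects equipped with descent data, not arbitrary $B$-points of a scheme, and the asserted conclusion is in any case too strong, since $\sigma(h)$ only lies in the closure of $\rho(G)$ in $\GL_n(B)$ and there is no reason for its matrix entries to lie in $A$ (the representation of $H$ extending $\rho$ may genuinely require a larger coefficient ring than the one generated by the entries of $\rho$ on generators of $G$). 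A correct conclusion is more elementary and avoids descent altogether: let $A'\subseteq B$ be the $A$-subalgebra generated by the entries of $\sigma(h_i)^{\pm1}$ for a finite generating set $h_1,\dots,h_r$ of $H$ (here finite generation of $H$ enters). Then $A'$ is a finitely generated, hence Noetherian, $A$-algebra with finitely many minimal primes; since $A\hookrightarrow A'$ and $A$ is a domain, one of these primes, say $\mathfrak{p}$, satisfies $\mathfrak{p}\cap A=(0)$. The quotient $A''=A'/\mathfrak{p}$ is then a finitely generated domain of characteristic zero containing $A$, so the given embedding $A\hookrightarrow\bC$ extends to $A''\hookrightarrow\bC$, and $\sigma\bmod\mathfrak{p}$ yields a representation $H\to\GL_n(\bC)$ with $(\sigma\bmod\mathfrak{p})\circ\alpha=\rho$. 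Without a step of this kind (or Grothendieck's own variant of it), the essential surjectivity argument is incomplete.
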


For spaces with klt singularities, we have shown that the difference between
profinite completions $\what{π}_1(X)$ and $\what{π}_1(X_{\reg})$ can be made to
vanish.

\begin{thm}[\protect{Existence of maximally quasi-étale covers, \cite[Theorem~1.4]{GKP13}}]\label{thm:gfa:2}
  Let $X$ be a normal, complex, quasi-projective variety.  Assume that there
  exists a Weil $ℚ$-divisor $Δ$ such that $(X, Δ)$ is klt.  Then, there exists a
  normal variety $\wtilde X$ and a quasi-étale, Galois morphism
  $γ: \wtilde X → X$, such that the following, equivalent conditions hold.
  \begin{enumerate}
  \item\label{il:mc1} Any finite, étale cover of $\wtilde X_{\reg}$ extends to a
    finite, étale cover of $\wtilde X$.

  \item\label{il:mc2} The natural map
    $\what{ι}_* : \what{π}_1(\wtilde X_{\reg}) → \what{π}_1(\wtilde X)$ of étale
    fundamental groups induced by the inclusion of the smooth locus,
    $ι : \wtilde X_{\reg} → \wtilde X$, is an isomorphism.  \qed
  \end{enumerate}
\end{thm}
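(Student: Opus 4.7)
The plan is to build $\widetilde X$ at the top of a tower of quasi-étale Galois covers of $X$, chosen to successively kill the ``defect'' $K(Y) := \ker\bigl(\what{π}_1(Y_{\reg}) \to \what{π}_1(Y)\bigr)$, with termination provided by the finiteness theorem for local étale fundamental groups at klt singularities.

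First I would observe that conditions \eqref{il:mc1} and \eqref{il:mc2} are equivalent for any normal variety $Y$. By Zariski--Nagata purity of the branch locus, every finite étale cover of $Y_{\reg}$ has a unique prolongation to a normal finite cover of $Y$ that is étale in codimension one (obtained by normalising $Y$ in the function field of the cover). Under the Galois correspondence this prolongation is everywhere étale precisely when the corresponding open subgroup of $\what{π}_1(Y_{\reg})$ contains $K(Y)$; since $K(Y)$ is the intersection of all such open subgroups, \eqref{il:mc1} holds if and only if $K(Y) = \{1\}$, which is \eqref{il:mc2}. The existence statement therefore reduces to producing a quasi-étale Galois cover $\widetilde X \to X$ with $K(\widetilde X) = \{1\}$.

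I would then proceed by iteration. Set $X_0 := X$. If $K(X_i)$ is non-trivial, pick a continuous surjection $K(X_i) \twoheadrightarrow G$ onto a non-trivial finite group, and let $γ_{i+1}: X_{i+1} \to X_i$ be the quasi-étale Galois cover arising as the normalisation of $X_i$ in the function field of the corresponding étale $G$-cover of $X_{i,\reg}$. Quasi-étale covers preserve the klt condition---the ramification divisor is empty, so discrepancies are inherited---so every $X_i$ remains klt, and a Galois closure over $X$ presents each $X_i \to X$ as a quasi-étale Galois cover of $X$ itself. By construction, each step strictly enlarges the open subgroup of $\what{π}_1(X_{\reg})$ that comes from a quasi-étale cover factoring through the tower.

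The crucial and hardest ingredient is termination. Here one invokes the theorem that the local étale fundamental group $\what{π}_1\bigl(\Spec \sO^h_{X,x} \setminus \{x\}\bigr)$ is \emph{finite} at every klt singular point $x$. Since $K(X_i)$ is topologically normally generated by the images of these local groups, a stratification argument on the (finitely many analytic singularity types of the) singular locus yields a uniform upper bound on $\deg(X_i \to X)$; as these degrees are strictly increasing through the tower, the procedure must stop, producing the desired $\widetilde X$. The main obstacle in carrying this out is precisely the global-from-local passage: $\what{π}_1(X_{\reg})$ need not be topologically finitely generated even when every local fundamental group is finite, so one has to argue carefully that only boundedly many local obstructions feed into each cover in the tower. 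This global control of the local input is the technical heart of the matter in \cite{GKP13}.
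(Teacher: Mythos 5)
First, a point of calibration: the survey you are reading does not prove Theorem~\ref{thm:gfa:2} at all --- it quotes it from \cite{GKP13} and only remarks that the proof builds on Xu's theorem \cite{Xu12} on local étale fundamental groups. So your proposal has to be measured against the argument of \cite{GKP13}. At the level of architecture you do match it: the equivalence of \ref{il:mc1} and \ref{il:mc2} via purity and the Galois correspondence is correct as you state it (and you correctly render Xu's result as a \emph{finiteness} statement, where the survey's wording ``vanish'' is loose), the reduction to killing $K(\wtilde X)$ is right, and $\wtilde X$ is indeed built by iterating non-extendable quasi-étale Galois covers, with klt preserved up the tower and termination coming from Xu. One local repair: a continuous surjection $K(X_i) \twoheadrightarrow G$ does not define a cover of $X_{i,\reg}$; you must instead choose an open normal subgroup of $\what{\pi}_1(X_{i,\reg})$ that fails to contain $K(X_i)$ and take the corresponding Galois cover.

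The genuine gap is your termination mechanism. There is no ``uniform upper bound on $\deg(X_i \to X)$'', and none can be extracted from the finiteness of the local fundamental groups. The group by which you cover at stage $i$ is a finite quotient of the \emph{global} group $\what{\pi}_1(X_{i,\reg})$ in which $K(X_i)$ merely survives; its order is not controlled by the local groups. Already at the first step one can ramify only over a single quotient singularity with local group $\bZ/2$ while taking a Galois group of arbitrarily large order, so even the minimal admissible per-step degree, let alone the total degree, admits no bound in terms of local data. What the local finiteness controls is the \emph{ramified part} of the tower, not its degree, so the invariant to track is local rather than global. The argument of \cite{GKP13} runs accordingly: assuming an infinite tower in which no step is étale, one produces a compatible chain of points $x_i \in X_i$ at which étaleness fails; for a quasi-étale cover of normal klt spaces the local étale fundamental group upstairs sits inside the one downstairs with index equal to the local degree, which is $>1$ at a non-étale point, so the orders of these finite groups strictly decrease --- contradicting Xu's finiteness. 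The two real difficulties, which your sketch does not engage, are (a) producing that compatible chain, since the branch loci of the individual covers may wander over $X_{\sing}$ and one needs a Noetherian-induction argument on the images of these loci (your appeal to ``finitely many analytic singularity types'' of the singular locus is not a fact one can lean on), and (b) making the local finiteness available not just at closed points but at the generic points of the positive-dimensional strata of the singular set. These, and not a degree bound, constitute the technical heart of \cite{GKP13}.
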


The proof of Theorem~\ref{thm:gfa:2} builds on work of Chenyang Xu who proved
that local étale fundamental groups vanish for spaces with isolated klt
singularities, \cite{Xu12}.  We want to emphasise that Xu's result is by no
means elementary, and uses many of the recent advances in higher-dimensional
birational geometry, such as boundedness results for log Fano manifolds.

\subsection{Application to flatness}
\approvals{Behrouz & yes \\Daniel & yes \\Stefan & yes}

We aim to apply Theorem~\ref{thm:gfa:2} to the study of flat sheaves on klt
spaces.  Since we are dealing with singular spaces, we do not attempt to define
flat sheaves via connections.  Instead, a flat sheaf $\sF$ will always be an
analytic, locally free sheaf, given by a representation of the fundamental
group.

\begin{defn}\label{defn:flat:1}
  If $Y$ is any complex space, and $\sG$ is any locally free sheaf on $Y$, we
  call $\sG$ \emph{flat} if it is defined by a representation of the fundamental
  group.  A locally free, algebraic sheaf on a complex algebraic variety $Y$ is
  called flat if and only if the associated analytic sheaf on the underlying
  complex space $Y^{an}$ is flat.
\end{defn}

We obtain the following consequences of Theorems~\ref{thm:gro1} and
\ref{thm:gfa:2}.

\begin{thm}[\protect{Étale fundamental groups dictate flatness, \cite[Section~11.1]{GKP13}}]\label{thm:flat:1}
  Let $X$ be a normal, complex, quasi-projective variety, and assume that the
  natural inclusion map between étale fundamental groups,
  $\what{ι}_* : \what{π}_1(X_{\reg}) → \what{π}_1(X)$, is isomorphic.  If $\sF°$
  is any flat, locally free, analytic sheaf defined on the complex manifold
  $X_{\reg}^{an}$, then there exists a flat, locally free, analytic sheaf $\sF$
  on $X^{an}$ such that $\sF° = \sF|_{X_{\reg}^{an}}$.
\end{thm}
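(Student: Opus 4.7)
The plan is to translate the extension problem for flat locally free analytic sheaves into an extension problem for finite-dimensional complex representations of fundamental groups, and then invoke Grothendieck's Theorem~\ref{thm:gro1} together with the hypothesis.

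First, I would reduce to a statement about representations. On the complex manifold $X_{\reg}^{an}$, the Riemann--Hilbert/monodromy correspondence identifies flat, locally free analytic sheaves of rank $r$ with finite-dimensional representations $\rho : \pi_1(X_{\reg}^{an}) \to \GL_r(ℂ)$, up to isomorphism. Exactly the same statement holds on the complex analytic space $X^{an}$: it is locally contractible (being a normal complex analytic space), so flat locally free analytic sheaves on $X^{an}$ in the sense of Definition~\ref{defn:flat:1} are classified by finite-dimensional representations of $π_1(X^{an}) = π_1(X)$. Under these identifications, restriction along the open immersion $ι^{an} : X_{\reg}^{an} \hookrightarrow X^{an}$ corresponds to pull-back along the natural homomorphism $ι_* : π_1(X_{\reg}) \to π_1(X)$. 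Thus the theorem reduces to showing that, given the representation $ρ$ associated to $\sF°$, there exists a representation $\wtilde ρ$ of $π_1(X)$ with $ι^* \wtilde ρ \cong ρ$.

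Second, I would check that Grothendieck's Theorem~\ref{thm:gro1} applies to the homomorphism $ι_*$. Since $X$ is quasi-projective, both $X^{an}$ and $X_{\reg}^{an}$ have the homotopy type of a finite CW complex, so $π_1(X)$ and $π_1(X_{\reg})$ are finitely generated. The hypothesis of Theorem~\ref{thm:flat:1} states precisely that the induced map on profinite completions $\what ι_* : \what π_1(X_{\reg}) \to \what π_1(X)$ is bijective. Theorem~\ref{thm:gro1} then gives that
\[
 ι^* : \operatorname{Rep}_ℂ \bigl( π_1(X) \bigr) \longrightarrow \operatorname{Rep}_ℂ \bigl( π_1(X_{\reg}) \bigr)
\]
is an equivalence of categories.

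Third, I would extract the desired sheaf. By essential surjectivity, there exists a representation $\wtilde ρ$ of $π_1(X)$ together with an isomorphism $ι^* \wtilde ρ \cong ρ$. Let $\sF$ be the flat locally free analytic sheaf on $X^{an}$ associated to $\wtilde ρ$. By construction, $\sF|_{X_{\reg}^{an}}$ is isomorphic to $\sF°$, and one may transport the sheaf structure along this isomorphism so that the restriction is literally equal to $\sF°$, as required.

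The main obstacle is entirely front-loaded into the input: the delicate point is the hypothesis that $\what ι_*$ is an isomorphism, which is exactly what Theorem~\ref{thm:gfa:2} supplies in the klt setting. Given that input and Theorem~\ref{thm:gro1}, the argument is essentially formal; the only subtlety to keep in mind is to make sure the monodromy correspondence is set up correctly on the singular space $X^{an}$, which is fine because normal complex analytic spaces are locally simply connected.
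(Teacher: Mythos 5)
Your proposal is correct and follows exactly the route the paper intends: the paper presents Theorem~\ref{thm:flat:1} as a consequence of Grothendieck's Theorem~\ref{thm:gro1}, i.e.\ one identifies flat locally free analytic sheaves with finite-dimensional representations of the (finitely generated) topological fundamental groups, notes that the étale fundamental groups are the profinite completions, and uses the resulting equivalence of representation categories to extend the monodromy representation of $\sF°$ from $π_1(X_{\reg})$ to $π_1(X)$. Nothing is missing; the only point worth recording explicitly, which you do, is that $X^{an}$ is locally contractible so that representations of $π_1(X)$ do give locally free flat sheaves on the singular space.
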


\begin{thm}[\protect{Flat sheaves on maximally quasi-étale covers, \cite[Theorem~1.13]{GKP13}}]\label{thm:flat:2}
  Let $X$ be a normal, complex, quasi-projective variety.  Assume that there
  exists a Weil $ℚ$-divisor $Δ$ such that $(X, Δ)$ is klt.  Then, there exists a
  normal variety $\wtilde X$ and a quasi-étale, Galois morphism
  $γ: \wtilde X → X$, such that the following holds.  If $\sG°$ is any flat,
  locally free, analytic sheaf on the complex space $\widetilde X_{\reg}^{an}$,
  there exists a flat, locally free, algebraic sheaf $\sG$ on $\wtilde X$ such
  that $\sG°$ is isomorphic to the analytification of $\sG|_{\wtilde X_{\reg}}$.
  \qed
\end{thm}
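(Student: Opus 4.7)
The plan is to assemble the statement from the two deeper results already established in this section: Theorem~\ref{thm:gfa:2} provides the maximally quasi-étale cover, and Theorem~\ref{thm:flat:1} gives the analytic extension across the singular locus. The remaining ingredient is a GAGA-type algebraization step on the cover.

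First, I would apply Theorem~\ref{thm:gfa:2} to the klt pair $(X, Δ)$. This produces a normal, quasi-projective variety $\wtilde X$ together with a quasi-étale, Galois morphism $γ : \wtilde X → X$ satisfying property~(\ref{il:mc2}); in particular, the natural map $\what{ι}_* : \what{π}_1(\wtilde X_{\reg}) → \what{π}_1(\wtilde X)$ is an isomorphism. So the variety $\wtilde X$ itself satisfies the hypothesis of Theorem~\ref{thm:flat:1}, and the maximality built into the construction is precisely the bridge that will make the extension step available on $\wtilde X$. Given then any flat, locally free, analytic sheaf $\sG^\circ$ on $\wtilde X_{\reg}^{an}$, I would invoke Theorem~\ref{thm:flat:1} with the role of $X$ played by $\wtilde X$. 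This produces a flat, locally free, analytic sheaf $\widetilde{\sG}^{an}$ on all of $\wtilde X^{an}$ whose restriction to $\wtilde X_{\reg}^{an}$ is isomorphic to $\sG^\circ$.

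The remaining and principal task is to algebraize. On the smooth, quasi-projective locus $\wtilde X_{\reg}$, the flat analytic bundle $\sG^\circ$ corresponds to a finite-dimensional representation of $π_1(\wtilde X_{\reg})$; by Deligne's Riemann--Hilbert correspondence for regular singular flat connections on smooth quasi-projective varieties, it underlies a uniquely determined algebraic locally free sheaf $\sG_{\reg}$ on $\wtilde X_{\reg}$ whose analytification recovers $\sG^\circ$. I would then set $\sG := ι_* \sG_{\reg}$, where $ι : \wtilde X_{\reg} \hookrightarrow \wtilde X$ is the inclusion of the big open subset; since $\wtilde X$ is normal and the complement of $\wtilde X_{\reg}$ is small, this pushforward is automatically coherent and reflexive.

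The main obstacle I expect is showing that $\sG$ is in fact locally free on all of $\wtilde X$, not merely reflexive. To handle this, I would compare the analytification $\sG^{an}$ with the sheaf $\widetilde{\sG}^{an}$ produced above: both are reflexive analytic sheaves on $\wtilde X^{an}$ whose restrictions to $\wtilde X_{\reg}^{an}$ are canonically isomorphic via the construction of $\sG_{\reg}$; by the Hartogs-type extension property for reflexive sheaves on a normal complex space, these two extensions must agree globally. Since $\widetilde{\sG}^{an}$ is locally free by Theorem~\ref{thm:flat:1}, so is $\sG^{an}$, and local freeness then transfers to $\sG$ because it can be checked on stalks of the analytic sheaf. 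Flatness of $\sG$, finally, follows from flatness of $\widetilde{\sG}^{an}$, which is built into the output of Theorem~\ref{thm:flat:1}. The delicate points throughout are the two appeals to Hartogs-type statements --- once to see that $ι_* \sG_{\reg}$ is coherent, and once to identify $\sG^{an}$ with $\widetilde{\sG}^{an}$ --- both of which rely crucially on the codimension-two assumptions that come with the klt setting.
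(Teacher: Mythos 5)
Your proposal is correct and follows essentially the same route the paper indicates: the survey states this theorem as a consequence of Theorem~\ref{thm:gfa:2} combined with Theorem~\ref{thm:flat:1} (itself resting on Grothendieck's Theorem~\ref{thm:gro1}), and the closing remark of that subsection invokes Deligne's algebraization result precisely to show that the reflexive algebraic extension is locally free, which is your comparison argument. The details are deferred to \cite[Theorem~1.13]{GKP13}, but your assembly of the three ingredients, including the Hartogs-type identification of the two reflexive extensions, matches the intended derivation.
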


Given a normal variety $X$ and a flat, locally free, analytic sheaf $\sF°$ on
$X_{\reg}^{an}$, Deligne has shown in \cite[II.5, Corollary~5.8 and
Theorem~5.9]{Deligne70} that $\sF°$ is algebraic, and thus extends to a
coherent, reflexive, algebraic sheaf $\sF$ on $X$.  The above theorems hence
provide criteria to guarantee that Deligne's extended sheaves are in fact
locally free.

\subsection{References}
\approvals{Behrouz & yes \\Daniel & yes \\Stefan & yes}

The existence of maximally quasi-étale covers has been shown in
\cite[Theorem~1.4]{GKP13}.  The paper contains more general results, discusses
the relation to flatness in details and gives applications.  The survey paper
\cite{KP14} covers these results in greater detail.

%%% Local Variables:
%%% mode: latex
%%% TeX-master: "SLC"
%%% End:

%
% Do not edit the following line.  The text is automatically updated by
% subversion.
%
\svnid{$Id: 05-higgs-bdls.tex 124 2016-08-19 06:29:47Z kebekus $}

\section{Nonabelian Hodge theory}
\label{sec:naht}
\subversionInfo
\approvals{Behrouz & yes \\Daniel & yes \\Stefan & yes}

The proof of our main result makes heavy use of Simpson's nonabelian Hodge
correspondence, which relates representations of the fundamental group to Higgs
bundles.  We will also use Simpson's construction of variations of Hodge
structures from special Higgs bundles.  Before recalling these results in more
detail below, we begin with the definition of a Higgs bundle and present a few
examples.

\begin{defn}[Higgs bundle]\label{def:HiggsBdle}
  Let $X$ be a complex manifold.  A \emph{Higgs bundle} is a pair $(\sE,θ)$
  consisting of a holomorphic vector bundle $\sE$, together with an
  $\sO_X$-linear morphism $θ:\sE → \sE ⊗ Ω_X^1$, called \emph{Higgs field}, such
  that the composed morphism
  $$
  \xymatrix{ %
    \sE \ar[r]^(.35){θ} & \sE ⊗ Ω¹_X \ar[rr]^(.4){θ ⊗ \Id} && \sE ⊗ Ω¹_X ⊗ Ω¹_X
    \ar[rr]^(.55){\Id ⊗ Λ²} && \sE ⊗ Ω²_X }
  $$
  vanishes.  Following tradition, the composed morphism will be denoted by
  $θ Λ θ$.  A coherent subsheaf $\sF ⊆ \sE$ is said to be $θ$-invariant if
  $θ(\sF) ⊆ \sF⊗ Ω^1_X$.
\end{defn}

\begin{defn}[System of Hodge bundles]
  Let $X$ be a complex manifold.  A \emph{system of Hodge bundles} is a Higgs
  bundle $(\sE,θ)$ on $X$, together with a number $n ∈ ℕ$ and a direct sum
  decomposition
  $$
  \sE = \bigoplus_{p+q=n} \sE^{p,q}
  $$
  such that for all indices $(p,q)$, the restriction $θ|_{\sE^{p+q}}$ takes its
  image in $\sE^{p-1,q+1}⊗ Ω_X^1$.  The restricted maps are traditionally
  written as $θ^{p,q}:\sE^{p,q}→ \sE^{p-1,q+1}⊗ Ω_X^1$.
\end{defn}

\begin{example}[Higgs sheaves with trivial field]
  Let $X$ be a complex manifold.  Let $\sE$ be any holomorphic vector bundle.
  Then, together with and consider the zero morphism $θ : \sE → \sE ⊗ Ω¹_X$.  In
  this example, any subsheaf of $\sE$ is $θ$-invariant.
\end{example}

\begin{example}[A natural Higgs sheaf attached to a complex manifold]\label{ex:BQfield0}
  Let $X$ be a complex manifold.  Set $\sE := Ω¹_X ⊕ \sO_X$ and define an
  operator $θ$ as follows,
  $$
  \begin{matrix}
    θ : & \mathclap{Ω¹_X} & ⊕ & \mathclap{\sO_X} & \xrightarrow{\ \ \ \ \ } & \Bigl( Ω¹_X & ⊕ & \sO_X \Bigr) & ⊗ & Ω¹_X \\
    & a & + & b & \mapsto & ( 0 & + & 1 ) & ⊗ & a.
  \end{matrix}
  $$
  An elementary computation shows that $θ Λ θ = 0$, so that $(\sE, θ)$ forms a
  Higgs bundle.  Observe that the direct summand $\sO_X ⊆ \sE$ is $θ$-invariant.
  On the other hand, non-zero subsheaves of the direct summand $Ω¹_X$ are never
  invariant.  In fact, $(\sE, θ)$ a system of Hodge bundles.  Indeed, the
  corresponding direct sum decomposition is given by
  $$
  \sE = \sE^{1,0} ⊕ \sE^{0,1}, \quad\text{with}\quad \sE^{1,0} = Ω¹_X \text{ and
  } \sE^{0,1} = \sO_X.
  $$
\end{example}

\subsection{Elementary constructions}
\approvals{Behrouz & yes \\Daniel & yes \\Stefan & yes}

The Higgs bundles on a given complex manifold form a category, with the obvious
definition for a morphism.  The following additional constructions allow for
direct sums, tensor products, duals, and pulling-back.

\begin{construction}[Direct sum and tensor product]\label{cons:dstp}
  Let $X$ be a complex manifold, and let $(\sE_1, θ_1)$ and $(\sE_2, θ_2)$ be
  two Higgs bundles on $X$.  Then, there are natural Higgs fields on the direct
  sum and tensor product,
  $$
  (\sE_1 ⊕ \sE_2,\; θ_1 ⊕ θ_2) \quad\text{and}\quad (\sE_1 ⊗ \sE_2,\; θ_1 ⊗
  \Id_{\sE_2} + \Id_{\sE_1} ⊕ θ_2).
  $$
\end{construction}

\begin{construction}[Dual and endomorphisms]\label{cons:dl}
  Let $X$ be a complex manifold, and let $(\sE, θ)$ be a Higgs bundle on $X$.
  The Higgs field can be seen as a section of the sheaf $\sE^* ⊗ \sE ⊗ Ω¹_X$,
  which is naturally isomorphic to $\sE ⊗ \sE^* ⊗ Ω¹_X$.  This allows to equip
  $\sE^*$, and then also $\sEnd(\sE) = \sE^* ⊗ \sE$ with natural Higgs fields.
\end{construction}

\begin{construction}[Pull-back]\label{cons:pb1}
  Let $X$ be a complex manifold, and let $(\sE, θ)$ be a Higgs bundle on $X$.
  Given a morphism of manifolds, $f : Y → X$, consider the sheaf morphism $θ'$,
  defined as the composition of the following maps,
  $$
  f^* \sE \xrightarrow{f^* θ} f^* \bigl( \sE ⊗ Ω¹_X \bigr) = f^* \sE ⊗ f^* Ω¹_X
  \xrightarrow{\Id_{f^* \sE} ⊗ df} f^* \sE ⊗ Ω¹_Y.
  $$
  One verifies that $θ'Λ θ' = 0$, so that $θ'$ equips $f^* \sE$ with the
  structure of a Higgs bundle.
\end{construction}

\subsection{The nonabelian Hodge correspondence}
\label{ssec:nahc}
\approvals{Behrouz & yes \\Daniel & yes \\Stefan & yes}

The following major result of Simpson, known as the \emph{nonabelian Hodge
  correspondence} relates Higgs bundles to representations of the fundamental
group.  We define the appropriate notion of stability first.

\begin{defn}[Higgs stability]\label{def:hsta}
  Let $X$ be a projective manifold and $H ∈ \Div(X)$ be a nef divisor.  We say
  that $(\sE,θ)$ is \emph{semistable with respect to $H$} if the inequality
  $μ_H(\sF) ≤ μ_H(\sE)$ holds for all $θ$-invariant subsheaves with
  $0 < \rank \sF < \rank \sE$.  The Higgs sheaf is called \emph{stable} if the
  inequality is always strict.  Direct sums of stable Higgs bundles are called
  \emph{polystable}.
\end{defn}

\begin{thm}[\protect{Nonabelian Hodge correspondence, \cite[Theorem~1]{MR1159261}}]
  Let $X$ be a projective manifold and $H ∈ \Div(X)$ be an ample divisor.  Then,
  there exists an equivalence between the categories of all representations of
  $π_1(X)$, and of all $H$-semistable Higgs bundles with vanishing Chern
  classes.  \qed
\end{thm}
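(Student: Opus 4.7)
The plan is to construct two functors in opposite directions and show they are mutually quasi-inverse, building on two deep analytic existence theorems. A representation $\rho : \pi_1(X) \to \GL(V)$ yields the flat bundle $E_\rho := \wtilde{X} \times_\rho V$ equipped with a flat connection $D$; a Higgs bundle $(\sE, \theta)$ carries the operator $\bar\partial_\sE + \theta$. An auxiliary Hermitian metric $h$ on the underlying $C^\infty$-bundle provides a Chern connection $\nabla_h$ and an adjoint $\theta^*_h$, connecting the two viewpoints via the formal identity $D = \nabla_h + \theta + \theta^*_h$. The content of the correspondence is that one can always choose $h$ so that this decomposition is consistent in both directions, provided a suitable nonlinear equation is imposed on $h$.

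For the functor from representations to Higgs bundles, I would first reduce to the semisimple case, since the non-semisimple ones correspond to extensions of polystable objects and hence to semistable-but-not-polystable Higgs bundles. For a semisimple $\rho$, Corlette's theorem produces a harmonic metric $h$ on $E_\rho$. Splitting the difference $D - \nabla_h$ into types yields an $\sEnd(E_\rho)$-valued $(1,0)$-form $\theta$; the harmonicity condition is equivalent to both $\bar\partial \theta = 0$ and $\theta \wedge \theta = 0$, so the holomorphic structure coming from the $(0,1)$-part of $\nabla_h$ together with $\theta$ gives a Higgs bundle. Chern-Weil theory applied to the flat connection then shows that the resulting object is polystable of slope zero with vanishing Chern classes against $[H]^{n-1}$ and $[H]^{n-2}$.

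For the functor from polystable Higgs bundles with vanishing Chern classes to representations, I would invoke the Hitchin-Simpson existence theorem for Hermitian-Yang-Mills metrics on stable Higgs bundles. This yields a metric $h$ satisfying $\Lambda_H\bigl(F_{\nabla_h} + [\theta, \theta^*_h]\bigr) = 0$. Under the vanishing of the relevant Chern intersection numbers, integrating $\bigl|F_{\nabla_h} + [\theta, \theta^*_h]\bigr|^2$ against $\omega_H^n$ and comparing with the Chern-Weil expressions for $c_1^2$ and $c_2$ forces $F_{\nabla_h} + [\theta, \theta^*_h] \equiv 0$. Consequently, $\nabla_h + \theta + \theta^*_h$ is flat, and its monodromy defines the desired representation. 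Semistable Higgs bundles are handled through their Jordan-Hölder gradings, producing the correct equivalence on the level of S-equivalence classes.

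Finally I would verify that the two functors are mutually quasi-inverse and behave well on morphisms; this reduces to the essential uniqueness of harmonic and Hermitian-Yang-Mills metrics on simple objects (up to positive scalars), and is essentially formal once the existence theorems are in hand. The main obstacle is clearly those analytic inputs: Corlette's theorem is a harmonic-map problem for $\rho$-equivariant maps $\wtilde X \to \GL(V)/U(V)$, solved by energy-minimising heat-flow techniques, while the Hitchin-Simpson theorem extends the Donaldson-Uhlenbeck-Yau correspondence to the Higgs setting via a continuity method and nonlinear elliptic PDE analysis. Both are highly nontrivial and lie far outside anything the preceding sections provide, so they must be taken as black boxes in any streamlined account.
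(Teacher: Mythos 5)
The paper does not actually prove this statement: it is quoted from Simpson's work with a \verb+\qed+ attached, and all of the analytic content is deliberately treated as a black box. Your sketch is therefore being measured against the literature rather than against anything in the text, and on that score it is essentially the standard argument: Corlette's harmonic metric for semisimple representations in one direction, the Hitchin--Simpson Hermitian--Yang--Mills existence theorem in the other, and the Chern--Weil integration argument showing that vanishing of $ch_1(\sE)\cdot[H]^{n-1}$ and $ch_2(\sE)\cdot[H]^{n-2}$ forces the full curvature $F_{\nabla_h}+[\theta,\theta^*_h]$ to vanish identically rather than merely its $\Lambda_H$-trace. All of that is correct and is exactly how Simpson proceeds.

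The one genuine gap is your final step, where you pass from semisimple representations and polystable Higgs bundles to the full categories. A Jordan--H\"older filtration only controls the associated graded object, so the argument as you describe it yields a bijection on S-equivalence classes, i.e.\ on isomorphism classes of polystable Higgs bundles and of semisimple representations. The theorem asserts an equivalence of the \emph{whole} categories, including non-semisimple objects and all morphisms between them, and two semistable Higgs bundles with the same Jordan--H\"older graded pieces need not be isomorphic. To close this you must show that the correspondence on semisimple objects extends compatibly to iterated extensions; Simpson does this by identifying the relevant $\Ext^1$ groups on the two sides (the first cohomology of the endomorphism objects, computed via the Higgs complex and the de Rham complex respectively), after which the two functors become mutually quasi-inverse on all of the stated categories. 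With that addition your outline is a faithful account of the proof; without it you have only established the correspondence at the level of moduli points, which is strictly weaker than the categorical statement being claimed.
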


\subsection{Higgs bundles induced by variations of Hodge Structures}
\approvals{Behrouz & yes \\Daniel & yes \\Stefan & yes}

As mentioned in the introduction to this chapter, Simpson constructed variations of Hodge
structures via Higgs bundles.  We briefly recall the most relevant definitions
and results.

\begin{defn}[\protect{Polarised, complex variation of Hodge structures, \cite[Section~8]{MR944577}}]\label{def:pCVHS}
  Let $X$ be a complex manifold, and $w ∈ ℕ$ a natural number.  A
  \emph{polarised, complex variation of Hodge structures of weight $w$}, or
  \pCVHS\ in short, is a $\cC^\infty$-vector bundle $\cV$ with a direct sum
  decomposition $\cV = ⊕_{r+s=w} \cV^{r,s}$, a flat connection $D$ that
  decomposes as follows
  \begin{equation}\label{eq:Gtrans}
    D|_{\cV^{r,s}} : \cV^{r,s} → \cA^{0,1}(\cV^{r+1,s-1}) ⊕ \cA^{1,0}(\cV^{r,s}) ⊕
    \cA^{0,1}(\cV^{r,s}) ⊕ \cA^{1,0}(\cV^{r-1,s+1}),
  \end{equation}
  and a $D$-parallel Hermitian metric on $\cV$ that makes the direct sum
  decomposition orthogonal and that on $\cV^{r,s}$ is positive definite if $r$
  is even and negative definite if $r$ is odd.
\end{defn}

Given a \pCVHS, one constructs an associated Higgs bundle as follows.

\begin{construction}[Higgs sheaves induced by a \pCVHS]\label{cons:hs}
  Given a \pCVHS\ as in Definition~\ref{def:pCVHS}, use \eqref{eq:Gtrans} to
  decompose $D$ as $D = \overline{θ} ⊕ \partial ⊕ \overline{\partial} ⊕ θ$.  The
  operators $\overline{\partial}$ equip the $\cC^\infty$-bundles $\cV^{r,s}$
  with complex structures.  We write $\sE^{r,s}$ for the associated locally free
  sheaves of $\sO_X$-modules, and set $\sE := ⊕ \sE^{r,s}$.  The operators $θ$
  then define an $\sO_X$-linear morphism $\sE → \sE ⊗ Ω^1_X$.  As $D$ is flat,
  this is a Higgs field.
\end{construction}

\begin{defn}[Higgs bundles induced by a \pCVHS]\label{def:hscvhs}
  Let $X$ be a complex manifold and $(\sE, θ)$ a Higgs bundle on $X$.  We say
  that \emph{$(\sE, θ)$ is induced by a \pCVHS} if there exists a \pCVHS\ on $X$
  such that $(\sE, θ)$ is isomorphic to the Higgs bundle obtained from it via
  Construction~\ref{cons:hs}.
\end{defn}

Scaling the Higgs field induces an action of $ℂ^*$ on the set of isomorphism
classes of Higgs bundles.  Under suitable assumptions, Simpson shows that Higgs
bundles induced by a \pCVHS\ correspond exactly to $ℂ^*$-fixed points.  The
following theorem summarises his results.

\begin{thm}[{Higgs bundles induced by a \pCVHS, I, \cite[Corollary~4.2]{MR1179076}}]\label{thm:charPCVHS}
  Let $X$ be a complex, projective manifold of dimension $n$ and $H ∈ \Div(X)$
  be an ample divisor.  Let $(\sE, θ)$ be a Higgs bundle on $X$.  Then,
  $(\sE, θ)$ comes from a variation of Hodge structures in the sense of
  Definition~\ref{def:hscvhs} if and only if the following three conditions
  hold.
  \begin{enumerate}
  \item\label{il:emerson} The Higgs bundle $(\sE, θ)$ is $H$-polystable.
  \item\label{il:lake} The intersection numbers $ch_1(\sE)·[H]^{n-1}$ and
    $ch_2(\sE)·[H]^{n-2}$ both vanish.
  \item\label{il:palmer} For any $t ∈ ℂ^*$, the Higgs bundles $(\sE, θ)$ and
    $(\sE, t·θ)$ are isomorphic.  \qed
  \end{enumerate}
\end{thm}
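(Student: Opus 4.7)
The plan is to verify each direction separately, with the easy direction being the construction of a $ℂ^*$-fixed-point structure from a \pCVHS, and the hard direction being the reconstruction of a Hodge decomposition from the data of a $ℂ^*$-fixed, polystable Higgs bundle with vanishing Chern-class intersections.

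For the direction ``$⇒$'', assume $(\sE, θ)$ comes from a \pCVHS\ via Construction~\ref{cons:hs}.  The polystability in~\eqref{il:emerson} and the vanishing in~\eqref{il:lake} are consequences of the nonabelian Hodge correspondence applied to the underlying flat bundle: the harmonic metric provided by the \pCVHS\ produces a Hermitian-Yang-Mills structure on $(\sE, θ)$, which forces polystability with respect to any ample polarisation and vanishing of the relevant Chern-class intersections.  For item~\eqref{il:palmer}, observe that the decomposition $\sE = ⊕ \sE^{p,q}$ together with the fact that $θ$ maps $\sE^{p,q}$ to $\sE^{p-1,q+1} ⊗ Ω^1_X$ means that, for $t ∈ ℂ^*$, the $\sO_X$-linear automorphism $φ_t$ of $\sE$ that acts on $\sE^{p,q}$ as multiplication by $t^p$ satisfies $φ_t ◦ θ = t · θ ◦ φ_t$.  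This provides the required isomorphism between $(\sE, θ)$ and $(\sE, t·θ)$.

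For the direction ``$⇐$'', assume conditions~\eqref{il:emerson}–\eqref{il:palmer}.  Using~\eqref{il:emerson} and~\eqref{il:lake}, one invokes the nonabelian Hodge correspondence to equip $(\sE, θ)$ with a tame harmonic metric, and hence to interpret it as a $ℂ^*$-fixed point in the moduli space of Higgs bundles.  The isomorphisms $φ_t : (\sE, θ) → (\sE, t·θ)$ predicted by~\eqref{il:palmer} a priori depend on $t$, and the main technical step is to choose them coherently so as to obtain a holomorphic, one-parameter automorphism $t \mapsto φ_t$ of $\sE$ with $φ_t ◦ θ = t · θ ◦ φ_t$.  Once this is achieved, one decomposes $\sE = ⊕_p \sE^p$ into weight spaces of $φ_t$, where $\sE^p$ is the eigenbundle on which $φ_t$ acts by $t^p$.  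The intertwining relation forces $θ(\sE^p) ⊆ \sE^{p-1} ⊗ Ω^1_X$, producing the $(p,q)$-grading required in Definition~\ref{def:pCVHS}.

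The final step is to promote the harmonic metric on $\sE$ to the polarisation of a \pCVHS.  One verifies that the direct-sum decomposition into weight spaces is orthogonal for the harmonic metric (essentially because $φ_t$ must be an isometry up to scaling on each weight space), and that the signs $(-1)^p$ on $\sE^p$ make the metric into a polarisation of the required type; the flat connection on the underlying $\cC^\infty$-bundle decomposes according to~\eqref{eq:Gtrans} because the Higgs field raises/lowers the weight by one.  The most delicate part of the whole argument, and the main obstacle, is the coherent choice of the automorphisms $φ_t$ from the a priori unrelated isomorphisms $(\sE, θ) \cong (\sE, t·θ)$; this is where one uses the rigidity coming from stability (implicit in~\eqref{il:emerson}) to rule out ambiguity and assemble the $ℂ^*$-action on $\sE$ itself, as in Simpson's original argument in \cite{MR1179076}.
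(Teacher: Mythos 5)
The paper offers no proof of this statement: it is quoted verbatim from Simpson, \cite[Corollary~4.2]{MR1179076}, and closed with a \verb+\qed+ referring to that source. Your sketch is therefore compared against Simpson's original argument rather than against anything in the text. The forward direction of your sketch is fine: the harmonic metric underlying the \pCVHS{} gives polystability and the Chern-class vanishing, and the grading operator $φ_t$ acting by $t^p$ on $\sE^{p,q}$ realises the isomorphism $(\sE,θ)\cong(\sE,t·θ)$.

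In the converse direction you have identified as ``the main obstacle'' a step that Simpson's proof does not actually need, and your route through it is both harder and incompletely justified. Simpson does \emph{not} assemble a coherent holomorphic one-parameter family $t\mapsto φ_t$. He reduces to the stable case (polystability plus the two vanishing intersection numbers already give a flat connection, hence vanishing of all Chern classes, and a decomposition into stable summands), fixes a \emph{single} $t∈ℂ^*$ that is not a root of unity, and takes \emph{one} isomorphism $f:(\sE,θ)→(\sE,t·θ)$, unique up to scalar by stability. The intertwining relation $f◦θ=t·θ◦f$ shows that $θ$ maps the generalised $λ$-eigensheaf of $f$ into the $(tλ)$-eigensheaf tensored with $Ω^1_X$; since there are only finitely many eigenvalues and $t$ has infinite order, these eigenvalues organise into finite chains $λ, tλ, t²λ,\dots$, and relabelling along each chain produces the integer grading with $θ$ of degree $-1$. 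Note that the eigenvalues of $f$ need not be powers of $t$, so your claim that $\sE$ decomposes into eigenbundles ``on which $φ_t$ acts by $t^p$'' presupposes exactly the integral $ℂ^*$-weight structure you are trying to construct; the relabelling of eigenvalue chains is the step that supplies it. Once the grading is in place, the harmonic metric (orthogonality of the eigensheaves follows because $f$ can be taken compatible with the metric up to scale, again by uniqueness of the harmonic metric on a stable object) together with the flat connection yields the decomposition \eqref{eq:Gtrans} and the alternating-sign polarisation, as you say. So your outline is salvageable, but as written the backward direction has a gap precisely at the point you flag: either carry out Simpson's single-automorphism eigenspace argument, or actually prove that the $φ_t$ can be chosen to form a one-parameter group, which you do not do.
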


\begin{rem}\label{rem:charPCVHS}
  With $X$ and $H$ as in Theorem~\ref{thm:charPCVHS}, any Higgs bundle
  $(\sE, θ)$ that satisfies \ref{il:emerson} and \ref{il:lake} carries a flat
  $\cC^\infty$-connection, \cite[Theorem~1(2) and Corollary~1.3]{MR1179076}.  In
  particular, all its Chern classes vanish.
\end{rem}

As one immediate consequence of Theorem~\ref{thm:charPCVHS}, we obtain the
following minor strengthening of \cite[Corollary~4.3]{MR1179076}.

\begin{cor}[\protect{Higgs bundles induced by a \pCVHS, II, \cite[Corollary~6.36]{GKPT15}}]\label{cor:pfccs}
  Let $X$ be a projective manifold, and $H ∈ \Div(X)$ be an ample divisor.  Let
  $\imath: S \hookrightarrow X$ be a submanifold.  The push-forward map
  $\imath_*: π_1(S) → π_1(X)$ induces a restriction map
  $$
  \begin{array}{rccc}
    r : & \begin{Bmatrix}
      \text{Isomorphism classes of $H$-semi-}\\
      \text{stable Higgs bundles $(\sE, θ)$ on $X$} \\
      \text{with vanishing Chern classes.}
    \end{Bmatrix}
    & → &
    \begin{Bmatrix}
      \text{Isomorphism classes of $H$-semi-}\\
      \text{stable Higgs bundles $(\sE, θ)$ on $S$} \\
      \text{with vanishing Chern classes.}
    \end{Bmatrix} \\[0.6cm]
    & (\sE, θ) & \mapsto & (\sE, θ)|_S.
  \end{array}
  $$
  In particular, if $(\sE, θ)$ is any $H$-semistable Higgs bundle $(\sE, θ)$ on
  $X$ with vanishing Chern classes, then $(\sE, θ)|_S$ is again $H$-semistable.
  The map $r$ has the following properties.
  \begin{enumerate}
  \item\label{il:simon} If $\imath_*$ is surjective, then $r$ is injective.  In
    particular, if $(\sE, θ)$ is a Higgs bundle on $X$ such that $(\sE, θ)|_S$
    comes from a \pCVHS, then $(\sE, θ)$ comes from a \pCVHS.
  \item\label{il:garfunkel} If in addition the induced push-forward map
    $\what{\imath}_*:\what{π}_1(S) → \what{π}_1(X)$ of algebraic fundamental
    groups is isomorphic, then $r$ is surjective.
  \end{enumerate}
\end{cor}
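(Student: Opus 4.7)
The plan is to reduce the entire statement to the nonabelian Hodge correspondence of Section~\ref{ssec:nahc} together with Grothendieck's Theorem~\ref{thm:gro1}. The key functoriality input is that, under Simpson's correspondence, restriction of a Higgs bundle along $\imath$ matches pullback of the associated representation along $\imath_* : \pi_1(S) \to \pi_1(X)$; granting this, $(\sE,\theta)|_S$ is automatically $H$-semistable with vanishing Chern classes whenever $(\sE,\theta)$ is, so the map $r$ is well defined, and semistability of restrictions is an immediate by-product.

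For assertion~\ref{il:simon}, the surjectivity of $\imath_*$ makes the pullback of finite-dimensional representations injective on isomorphism classes, and hence so is $r$. To transfer the \pCVHS{} property from $S$ back up to $X$, I would verify the three conditions of Theorem~\ref{thm:charPCVHS} for $(\sE,\theta)$: the Chern class condition~\ref{il:lake} is part of the setup; the $\bC^*$-fixed point condition~\ref{il:palmer} for $(\sE,\theta)$ follows from the corresponding condition for $(\sE,\theta)|_S$ by applying injectivity of $r$ to the pairs $(\sE,\theta)$ and $(\sE, t\theta)$ for each $t \in \bC^*$; and polystability~\ref{il:emerson} transfers from $S$ to $X$ because polystability matches semisimplicity of the associated representation, and for a surjection $\imath_*$ the $\rho$-invariant subspaces and the $\imath^*\rho$-invariant subspaces coincide, so semisimplicity descends.

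For assertion~\ref{il:garfunkel}, where $\what{\imath}_*$ is assumed to be an isomorphism of profinite completions, I would start with any $H$-semistable Higgs bundle $(\sF,\eta)$ on $S$ with vanishing Chern classes, associate to it a representation of $\pi_1(S)$ via Simpson, invoke Theorem~\ref{thm:gro1} to realise it (up to isomorphism) as the pullback of a representation of $\pi_1(X)$, and then apply Simpson's correspondence on $X$ to obtain a Higgs bundle $(\sE,\theta)$ on $X$ satisfying $r(\sE,\theta) = (\sF,\eta)$.

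The main technical obstacle will be pinning down the functoriality of Simpson's correspondence with respect to restriction to the smooth submanifold $\imath : S \into X$: one must know that the Higgs bundle produced from a harmonic bundle on $X$ restricts to the Higgs bundle produced from the restricted harmonic bundle on $S$, so that passing between Higgs-theoretic and representation-theoretic sides genuinely intertwines restriction and pullback. This is the only place where something beyond formal category-theoretic bookkeeping is needed; once it is in hand, the equivalences supplied by Theorem~\ref{thm:gro1} and by Simpson's correspondence do all the remaining work.
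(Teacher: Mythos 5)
Your proposal is correct and follows essentially the same route as the paper: both reduce everything to the functoriality of Simpson's correspondence (restriction of Higgs bundles matching pullback of representations), deduce injectivity of $r$ from surjectivity of $\imath_*$, obtain the \pCVHS{} transfer in~\ref{il:simon} from $\bC^*$-equivariance of $r$ plus Theorem~\ref{thm:charPCVHS}, and prove~\ref{il:garfunkel} by lifting representations via Theorem~\ref{thm:gro1}. Your explicit verification of polystability via semisimplicity of the associated representation is a point the paper leaves implicit, and your identification of the restriction/pullback compatibility as the one non-formal input matches the paper, which simply cites Simpson for it.
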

\begin{proof}
  Simpson's nonabelian Hodge correspondence, Theorem~\ref{cons:hs}, gives an
  equivalence between the categories of representations of the fundamental group
  $π_1(X)$ (resp.\ $π_1(S)$) and $H$-semistable Higgs bundles on $X$ (resp.\
  $S$) with vanishing Chern classes.  The correspondence is functorial in
  morphisms between manifolds, and pull-back of Higgs bundles corresponds to the
  push-forward of fundamental groups, \cite[Remark~1 on Page~36]{MR1179076}.  In
  particular, we see that the restriction of an $H$-semistable Higgs bundle with
  vanishing Chern classes is again $H$-semistable.

  In the setting of \ref{il:simon} where the push-forward map $π_1(S) → π_1(X)$
  is surjective, this immediately implies that the restriction $r$ is injective.
  The restriction map $r$ is clearly equivariant with respect to the actions of
  $ℂ^*$ obtained by scaling the Higgs fields.  Injectivity therefore implies
  that the isomorphism class of a Higgs bundle $(\sE, θ)$ is $ℂ^*$-fixed if and
  only if the same is true for $(\sE, θ)|_S$.  Theorem~\ref{thm:charPCVHS} thus
  proves the second clause of \ref{il:simon}.

  Now assume that we are in the setting of \ref{il:garfunkel}, where in addition
  the push-forward map $\what{π}_1(S) → \what{π}_1(X)$ is assumed to be
  isomorphic.  Since fundamental groups of algebraic varieties are finitely
  generated, Theorem~\ref{thm:gro1} implies that every representation of
  $π_1(S)$ comes from a representation of $π_1(X)$.  The claim thus again
  follows from Simpson's nonabelian Hodge correspondence.
\end{proof}

The following proposition links Higgs bundles coming from variations of Hodge
structures to minimal model theory.  It is crucial for all that follows.

\begin{prop}[\protect{Higgs bundles and minimal model theory, \cite[Corollary~6.39]{GKPT15}}]\label{prop:higgsfromdownst}
  Let $Y$ be a normal, projective variety with at worst canonical singularities
  and let $π : \wtilde Y → Y$ be a resolution of singularities.  Let
  $(\sF_{\wtilde Y}, θ_{\wtilde Y})$ be a Higgs bundle on $\wtilde Y$ that is
  induced by a \pCVHS.  Then, $\sF_{\wtilde Y}$ comes from $Y$.  More precisely,
  there exists a locally free sheaf $\sF_Y$ on $Y$ such that
  $\sF_{\wtilde Y} = π^* \sF_Y$.  Necessarily, we then have
  $\sF_{\wtilde Y} \cong π_* (\sF_Y)^{**}$.
\end{prop}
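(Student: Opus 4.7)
The plan is to combine the flatness of Higgs bundles arising from a \pCVHS\ with a descent argument along the contraction $π$, exploiting that canonical singularities are rational and that fibers of a resolution of klt singularities are topologically simply-connected. First, I would apply Remark~\ref{rem:charPCVHS} to the hypothesis: since $(\sF_{\wtilde Y}, θ_{\wtilde Y})$ comes from a \pCVHS, $\sF_{\wtilde Y}$ has vanishing Chern classes and carries a flat $\cC^\infty$-connection, so the underlying local system is described by a linear representation $ρ : π_1(\wtilde Y^{an}) \to \GL_r(\bC)$ of the topological fundamental group.

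Second, I would prove that for every $y ∈ Y$ the restriction of $\sF_{\wtilde Y}$ to the reduced fiber $F_y := π^{-1}(y)_\red$ is holomorphically trivial. This is where the assumption of canonical (hence log-terminal) singularities plays its role: by Takayama's theorem on the local simple-connectedness of log-terminal singularities, the fiber $F_y$ is simply-connected in the analytic topology. Consequently, the monodromy representation $π_1(F_y) \to \GL_r(\bC)$ obtained by restricting $ρ$ is trivial, so $\sF_{\wtilde Y}|_{F_y}$ is a trivial flat bundle and, since $F_y$ is connected, is isomorphic to $\sO_{F_y}^{\oplus r}$ as a holomorphic vector bundle.

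The last step is a standard descent argument. As $π$ is proper birational with connected reduced fibers (by Zariski's main theorem and normality of $Y$) and the function $y \mapsto h^0\bigl(F_y,\, \sF_{\wtilde Y}|_{F_y}\bigr) = r$ is constant on $Y$, Grauert's cohomology-and-base-change theorem implies that $\sF_Y := π_* \sF_{\wtilde Y}$ is locally free of rank $r$ and that the adjunction morphism $π^* \sF_Y \to \sF_{\wtilde Y}$ is an isomorphism; the reflexivity statement at the end is then automatic because $\sF_Y$ is locally free. The principal obstacle is the second step, which rests on Takayama's nontrivial local simple-connectedness theorem for log-terminal singularities. A secondary technical issue is to transition from triviality on the reduced fiber to the scheme-theoretic base change required by Grauert's theorem; this can be handled by a nilpotent induction exploiting the vanishing $R^i π_* \sO_{\wtilde Y} = 0$ for $i>0$, which is valid because canonical singularities are rational.
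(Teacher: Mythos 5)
There is a genuine gap in your Step~2, at the point where you pass from triviality of the monodromy to holomorphic triviality of $\sF_{\wtilde Y}|_{F_y}$. A Higgs bundle induced by a \pCVHS{} and the underlying flat bundle share the same $\cC^\infty$-bundle but carry \emph{different} holomorphic structures: in the notation of Construction~\ref{cons:hs}, the flat bundle's holomorphic structure is the $(0,1)$-part $\overline{θ}+\overline{\partial}$ of $D$, whereas the Higgs bundle $\sE=⊕\sE^{r,s}$ is holomorphic for $\overline{\partial}$ alone. Triviality of the representation $π_1(F_y)→\GL_r(ℂ)$ therefore only trivialises the local system; the Higgs bundle on $F_y$ is the associated graded of the Hodge filtration on that trivial local system, and it is trivial only if the Hodge filtration -- equivalently the period map $F_y → \cD$ -- is \emph{constant}. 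Establishing that constancy is precisely the content of the paper's proof: the fibres of $π$ are rationally chain-connected by Hacon--McKernan (this is where canonical, hence klt, singularities enter a second time, beyond Takayama's theorem), so one reduces to maps $ℙ^1 → \cD$, which are constant by the hyperbolicity of the period domain. Neither rational chain-connectedness nor hyperbolicity appears in your argument, and without them the implication you assert does not follow. (One could alternatively argue that a horizontal holomorphic map from a compact connected space into $\cD$ is constant by negative curvature of the horizontal distribution, but that is again the hyperbolicity input, not a consequence of simple-connectedness.)

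If Step~2 is repaired along these lines, your Step~3 is a legitimately different route from the paper's. The paper sidesteps descent entirely: once the period map factors as $ρ = ρ_U ◦ π$ locally over a contractible $U ∋ y$, it defines $\sF_U := ρ_U^*(\sF_{\cD})$ by pulling back a homogeneous bundle from the period domain, so local freeness of the descended sheaf is automatic. Your proposal instead takes $\sF_Y := π_*\sF_{\wtilde Y}$ and invokes Grauert plus cohomology-and-base-change; this is more standard machinery but, as you note, requires controlling the scheme-theoretic fibres, i.e.\ lifting the trivialisation through the infinitesimal neighbourhoods of $F_y$ using $R^iπ_*\sO_{\wtilde Y}=0$. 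That nilpotent induction is doable but is real work that the paper's construction avoids. The two approaches also differ in what they buy: the paper's argument produces $\sF_U$ together with its origin on the period domain (useful for the identification carried out later in Section~\ref{sec:bq}), while a pure descent argument only produces the sheaf.
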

\begin{proof}
  It suffices to construct $\sF_Y$ locally in the analytic topology, near any
  given point $y ∈ Y$.  Recall from \cite[Page~827]{Takayama2003} that there exists a contractible, open
  neighbourhood $U = U(y) ⊆ Y^{an}$ whose preimage $\wtilde U := π^{-1}(U)$ is
  simply connected.  By assumption, $(\sF_{\wtilde Y}, θ_{\wtilde Y})$ is
  induced from a \pCVHS{}, say $\cV$.  Let $ρ : \wtilde U → \cD$ be the
  corresponding period map.
  
  We claim that $ρ$ factors through the resolution,
  $$
  \xymatrix{ %
    \wtilde U \ar[r]_{π} \ar@/^3mm/[rr]^{ρ} & U \ar[r]_{\exists\; ρ_U} & \cD.
  }
  $$
  Since $U$ is normal, this will follows once we show that $ρ$ is constant on
  fibres of $π$.  The fibres of $π$, however, are known to be rationally
  chain-connected, \cite[Corollary~1.5]{HMcK07}.  In summary, $ρ$ will factor as
  soon as we show that for any morphism $η: ℙ¹ → \wtilde U$, the composed map
  $ρ ◦ η : ℙ¹ → \cD$ is constant.  Given one such $η$, we obtain a \pCVHS{} on
  $ℙ¹$ whose associated period map equals $ρ◦η$, simply by pulling back $\cV$
  via $η$.  However, due to hyperbolicity properties of the period domain $\cD$,
  this map has to be constant, \cite[Application 13.4.3]{CMSP}.
    
  It is known that $\sF_{\wtilde Y}|_{\wtilde U} \cong ρ^*(\sF_{\cD})$ for some
  vector bundle $\sF_{\cD}$ on the period domain $\cD$,
  cf.~\cite[Proposition~6.38]{GKPT15}.  If $ρ_U: U → \cD$ is the holomorphic map
  whose existence was shown in the previous paragraph, the vector bundle
  $\sF_U := ρ_U^*(\sF_{\cD})$ hence fulfils
  $π^*(\sF_U) \cong \sF_{\wtilde Y}|_{\wtilde U}$, as desired.
\end{proof}

\subsection{References}
\approvals{Behrouz & yes \\Daniel & yes \\Stefan & yes}

Higgs operators appeared in~\cite{Hit87} where Hitchin studied Yang-Mills
equations with the aim of finding conditions for existence of flat connections
on a compact Riemann surface.  In analogy to results of Narasimhan-Seshadri,
Hitchin observed that solutions to Yang-Mills equations impose additional
holomorphic data on the given holomorphic bundle, a condition that is nowadays
refereed to as Higgs stability.  Higgs fields were also introduced in the theory
of variation of Hodge structures in smooth families of projective varieties,
where they are encoded in the Griffiths transversality and holomorphicity
properties of the Gauss-Manin connection.  A fundamental result of Griffiths,
cf.~\cite{Gri68}, then showed that the existence of variation of Hodge
structures gives rise to a holomorphic map, the \emph{period map}, from the
universal cover to the classifying space of Hodge structures.

This result of Griffiths led Simpson to study uniformisation problems via
variations of Hodge structures.  He aimed to find holomorphic and numerical
conditions on a suitable Higgs bundle for it to define a complex variation of
Hodge structures whose associated period map would then provide an
identification of the universal cover.  This was famously achieved
in~\cite{MR944577}.  The arguments are parallel to earlier work of Hitchin,
Donaldson, and Uhlenbeck-Yau, \cite{Hit87, Donaldson85, UhlenbeckYau86}, in
tracing a correspondence between stable Higgs bundles with vanishing Chern
classes and flat connections.

There are many overview papers on the subject, including Simpson's ICM talk
\cite{MR1159261}.  The reader might also want to look at the excellent survey
\cite{MR3088903}, or at the short note \cite{MR2343296}.

%%% Local Variables:
%%% mode: latex
%%% TeX-master: "SLC"
%%% End:

%
% Do not edit the following line.  The text is automatically updated by
% subversion.
%
\svnid{$Id: 06-higgs-sheaves.tex 116 2016-08-18 12:12:37Z kebekus $}

\section{Higgs sheaves on singular spaces}
\label{sec:Higgs}
\subversionInfo

\subsection{Fundamentals}
\approvals{Behrouz & yes \\Daniel & yes \\Stefan & yes}

On a singular variety, the correct definition of a ``Higgs sheaf'' is perhaps
not obvious.  As we will note below, the following generalisation of
Definition~\ref{def:HiggsBdle} turns out to have just enough universal
properties to make the strategy of our proof work.  In the converse direction,
it seems that Definition~\ref{def:Higgs} and our notion of stability are in
essence uniquely dictated if we ask all these universal properties to hold.

\begin{defn}[Higgs sheaf, generalisation of Definition~\ref{def:HiggsBdle}]\label{def:Higgs}
  Let $X$ be a normal variety.  A \emph{Higgs sheaf} is a pair $(\sE, θ)$
  consisting of a coherent sheaf $\sE$ of $\sO_X$-modules, together with an
  $\sO_X$-linear morphism $θ : \sE → \sE ⊗ Ω^{[1]}_X$, called \emph{Higgs
    field}, such that the composed morphism
  $$
  \xymatrix{ %
    \sE \ar[r]^(.35){θ} & \sE ⊗ Ω^{[1]}_X \ar[rr]^(.45){θ ⊗ \Id} && \sE ⊗
    Ω^{[1]}_X ⊗ Ω^{[1]}_X \ar[rr]^(.55){\Id ⊗ [Λ]} && \sE ⊗ Ω^{[2]}_X }
  $$
  vanishes.  Following tradition, the composed morphism will be denoted by
  $θ Λ θ$.  The definition of \emph{systems of Hodge sheaves} carries over
  verbatim.
\end{defn}

\begin{warning}
  There exists an obvious notion of \emph{morphism of Higgs sheaves}, but there
  is generally no way to equip kernels or cokernels with Higgs fields.  Higgs
  sheaves hence do not form an Abelian category.
\end{warning}

\begin{defn}[Invariant and generically invariant subsheaves]\label{def:Higgsinvar}
  Setting as in Definition~\ref{def:Higgs}.  A coherent subsheaf $\sF ⊆ \sE$ is
  called \emph{$θ$-invariant} if $θ(\sF)$ is contained in the image of the
  natural map
  $$
  \sF ⊗ Ω^{[1]}_X → \sE ⊗ Ω^{[1]}_X.
  $$
  Call $\sF$ \emph{generically invariant} if the restriction $\sF|_{X_{\reg}}$
  is invariant with respect to $θ|_{X_{\reg}}$.
\end{defn}

\begin{warning}
  As $Ω_X^{[1]}$ is not locally free, in Definition~\ref{def:Higgsinvar} the
  sheaf $\sF ⊗ Ω^{[1]}_X$ is generally not a subsheaf of $\sE ⊗ Ω^{[1]}_X$.  As
  a consequence, there is generally no induced Higgs field on invariant or
  generically invariant subsheaves.  At this point, our setting differs
  substantially from the smooth case.  Even though we will later define
  (semi-)stability for Higgs sheaves, this will make it impossible to easily
  construct an analogue of the Harder-Narasimhan filtration.
\end{warning}

\subsection{Explanation and examples}
\approvals{Behrouz & yes \\Daniel & yes \\Stefan & yes}

At first sight, it might seem most natural and functorial to define Higgs fields
as morphisms to $\sE ⊗ Ω_X^1$.  However, in our application to uniformisation
for varieties of general type, the naturally induced sheaf of geometric origin
is $\sE := Ω^{[1]}_X ⊕ \sO_X$, as discussed in Example~\ref{ex:BQfield1} below.
Looking at $Ω_X^1 ⊕ \sO_X$ instead would render any discussion of semistability
moot, as semistability requires torsion freeness and even the most simple klt
singularities lead to torsion in $Ω_X^1$, see \cite{MR2915479} for examples.
  
On the other hand, the reader might wonder why $θ$ takes its values in
$\sE ⊗ Ω^{[1]}_X$ and not in its reflexive hull.  The advantages of our choice
will become apparent when pull-back functors are defined.  None of the
constructions there will work for reflexive hulls.

\begin{example}[A natural Higgs sheaf attached to a normal variety, generalising Example~\ref{ex:BQfield0}]\label{ex:BQfield1}
  Let $X$ be a normal variety.  Set $\sE := Ω^{[1]}_X ⊕ \sO_X$ and define a
  Higgs field
  $$
  θ : \sE → \sE⊗Ω^{[1]}_X, \quad a + b \mapsto (0+1)⊗a.
  $$
  As before, the direct summand $\sO_X ⊆ \sE$ is generically $θ$-invariant, and
  subsheaves of the direct summand $Ω^{[1]}_X$ are never generically
  $θ$-invariant, unless they are zero.
\end{example}

\begin{construction}[Direct sum, tensor product, dual and endomorphisms]
  Construction~\ref{cons:dstp} of Higgs fields on the direct sum and tensor
  product of two Higgs bundles carries over to Higgs sheaves.  If the Higgs
  sheaf is locally free, an immediate analogue of Construction~\ref{cons:dl}
  defines natural Higgs fields on the dual sheaf and on the endomorphism sheaf.
  We refer to \cite[Sections~5.1 and 6.1]{GKPT15} for details, and for further
  constructions.
\end{construction}

\subsection{Pull-back}
\label{sec:pull-back}
\approvals{Behrouz & yes \\Daniel & yes \\Stefan & yes}

One of the most basic properties of Higgs bundles is the existence of a
pull-back functor.  For Higgs sheaves on singular spaces, we do not believe that
a reasonable notion of pull-back exists in general.  In fact, to pull back Higgs
sheaves is at least as difficult as to pull-back reflexive differentials, and
examples abound which show that there is generally no notion of pulling-back for
reflexive differentials.  Worse still, even in settings where pull-back
morphisms happen to exist, the pull-back may fail to be functorial.  For spaces
with klt singularities, however, we have seen in Section~\ref{sec:reflDiff} that
functorial pull-back functor does exist.  For these spaces, the following
construction will then give a functorial pull-back of Higgs sheaves.

\begin{construction}[Pull-back of Higgs sheaves, generalisation of Construction~\ref{cons:pb1}]\label{cons:pb2}
  Let $(X,D)$ be a klt pair and let $(\sE, θ)$ be a Higgs sheaf on $X$.  Given a
  normal variety $Y$ and a morphism $f : Y → X$, consider the sheaf morphism
  $θ'$, defined as the composition of the following maps,
  $$
  f^* \sE \xrightarrow{f^* θ} f^* \Bigl( \sE ⊗ Ω^{[1]}_X \Bigr) = f^*
  \sE ⊗ f^* Ω^{[1]}_X \xrightarrow{\Id_{f^* \sE} ⊗
    \drefl f} f^* \sE ⊗ Ω^{[1]}_Y.
  $$
  One verifies that $θ'Λ θ' = 0$, so that $θ'$ equips $f^* \sE$ with the
  structure of a Higgs sheaf.  By minor abuse of notation, this Higgs sheaf will
  be denoted as $f^* (\sE, θ)$ or $(f^* \sE, f^* θ)$.  If $f$ is a closed or
  open immersion, we will also write $(\sE, θ)|_Y$ or $(\sE|_Y, θ|_Y)$.
\end{construction}

If the space $Y$ of Construction~\ref{cons:pb2} is smooth, the construction can
be generalised further, to define a Higgs field on the reflexive pull-back
$f^{[*]}\sE := \bigl( f^* \sE \bigr)^{**}$.  The resulting notion of ``reflexive
pull-back'' is important, but fails to have any form of functoriality,
cf.~\cite[Sect.~6.4]{GKPT15}.

\subsection{Stability}
\label{ssect:restrict}
\approvals{Behrouz & yes \\Daniel & yes \\Stefan & yes}

We close this section generalising the notion of stability from Higgs bundles to
Higgs sheaves.  Again, it might not be obvious at first sight that the following
definition, which considers slopes of subsheaves that are only generically
injective, is the ``right'' one.  It has the advantage that it behaves well with
respect to the reflexive pull-back discussed above.  The paper \cite{GKPT15}
uses this to compare stability of the Higgs sheaf $(\sE, θ)$ with that of its
reflexive pull-back.

\begin{defn}[Stability of Higgs sheaves]\label{defn:swostab1}
  Let $X$ be a normal, projective variety and $H$ be any nef, $ℚ$-Cartier
  $ℚ$-divisor on $X$.  Let $(\sE, θ)$ be a Higgs sheaf on $X$, were $\sE$ is
  torsion free.  We say that $(\sE, θ)$ is \emph{semistable with respect to $H$}
  if the inequality $μ_H(\sF) ≤ μ_H(\sE)$ holds for all generically
  $θ$-invariant subsheaves $\sF ⊆ \sE$ with $0 < \rank \sF < \rank \sE$.  The
  Higgs sheaf is called \emph{stable with respect to $H$} if the inequality is
  always strict.  Direct sums of stable Higgs sheaves are called
  \emph{polystable}.
\end{defn}

\begin{rem}
  For Higgs bundles, Definition~\ref{defn:swostab1} reproduces the earlier
  notion of stability, as introduced in Definition~\ref{def:hsta} above.  We
  refer to \cite[Sect.~6.6]{GKPT15} for details.
\end{rem}

\subsubsection{The restriction theorem}
\approvals{Behrouz & yes \\Daniel & yes \\Stefan & yes}

We conclude with a restriction theorem of Mehta-Ramanathan type, which will be
crucial for the proof of our main results.  Its (rather long and protracted)
proof relies on Langer's generalised Bogomolov-Gieseker inequalities for sheaves
with operators, resolving singularities and cutting down in order to reduce to a
setting where Langer's results apply.  The functorial properties of Higgs
sheaves play a pivotal role in this.

\begin{thm}[\protect{Restriction theorem for stable Higgs sheaves, \cite[Theorem~6.22]{GKPT15}}]\label{thm:restriction}
  Let $(X,Δ)$ be a projective klt pair of dimension $n ≥ 2$, let $H ∈ \Div(X)$
  be an ample, $ℚ$-Cartier $ℚ$-divisor and let $(\sE, θ)$ be a torsion free
  Higgs sheaf on $X$ of positive rank.  Assume that $(\sE, θ)$ is stable with
  respect to $H$.  If $m \gg 0$ is sufficiently large and divisible, then there
  exists a dense open set $U ⊆ |m·H|$ such that the following holds for any
  hyperplane $D ∈ U$ with associated inclusion map $ι : D → X$.
  \begin{enumerate}
  \item The hyperplane $D$ is normal, connected and not contained in $\supp Δ$.
    The pair $(D,Δ|_D)$ is klt.
  \item The sheaf $\sE|_D$ is torsion free.  The Higgs sheaf $ι^{*}(\sE, θ)$ is
    stable with respect to $H|_D$.  \qed
  \end{enumerate}
\end{thm}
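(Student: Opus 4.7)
The plan is to reduce the singular restriction statement to Langer's algebraic Mehta--Ramanathan-type theorem for (semi)stable Higgs sheaves on smooth projective varieties, cf.~\cite{MR3314517}. The reduction is made possible by the functorial reflexive pull-back of differentials available in the klt setting (Theorem~\ref{thm:main}) and, through it, by the pull-back construction for Higgs sheaves of Construction~\ref{cons:pb2}.

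Part~(1) and the torsion-freeness assertion in Part~(2) are classical Bertini-type statements. For $m \gg 0$ sufficiently divisible, $mH$ is a very ample Cartier divisor; Seidenberg's Bertini theorem on normal varieties then yields that a general $D \in |mH|$ is irreducible, normal, and connected, and for $D$ general we may further arrange that $D$ avoids the components of $\supp \Delta$ and the associated primes of $\sE$ other than the generic point of $X$. The kltness of $(D, \Delta|_D)$ follows from the Bertini theorem for klt pairs. Torsion-freeness of $\sE|_D$ is a direct consequence of the short exact sequence $0 \to \sE(-D) \to \sE \to \sE|_D \to 0$ combined with a routine depth/associated-prime computation.

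The substantive content is the stability assertion in Part~(2). Fix a log resolution $\pi \colon \wtilde X \to X$ of $(X, \Delta)$ that is an isomorphism over the big open set where $X$ is smooth and $\sE$ is locally free. Construction~\ref{cons:pb2} equips $\pi^* \sE$ with a naturally induced Higgs field $\pi^* \theta$, so we may work with $\pi^*(\sE, \theta)$ on the smooth variety $\wtilde X$. The main technical input at this stage is Langer's restriction theorem for semistable Higgs sheaves on smooth projective varieties, which we apply to (the reflexive hull of) $\pi^*(\sE, \theta)$ polarised by an ample class of the form $\pi^* H + \varepsilon A$ for some fixed ample $A$ on $\wtilde X$ and $\varepsilon > 0$ small; a standard bounded-family argument allows us to transfer semistability from the perturbed ample polarization to the nef-and-big divisor $\pi^* H$ itself. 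For $m \gg 0$ and general $D \in |mH|$, the strict transform $\wtilde D := \pi^{-1}_*(D)$ is smooth by Bertini on $\wtilde X$, and $\pi|_{\wtilde D}\colon \wtilde D \to D$ is an isomorphism in codimension one onto the normal variety $D$. Langer's theorem delivers stability of $\pi^*(\sE, \theta)|_{\wtilde D}$ with respect to $\pi^* H|_{\wtilde D}$, and the functoriality of Construction~\ref{cons:pb2} applied to $\pi|_{\wtilde D}$, together with the projection formula for slopes, descends this to stability of $\iota^*(\sE, \theta)$ on $(D, H|_D)$.

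The main obstacle is the compatibility of stability under the birational map $\pi$. Specifically, one needs that a generically $\pi^* \theta$-invariant destabilizing subsheaf of $\pi^*\sE$ with respect to $\pi^* H$ gives rise, via push-forward, to a generically $\theta$-invariant destabilizing subsheaf of $\sE$ with respect to $H$, contradicting the assumed stability of $(\sE, \theta)$ on $X$. This is where the subtle distinction between $\theta$-invariance and generic $\theta$-invariance (Definition~\ref{def:Higgsinvar}) and the fact that $\pi^* \theta$ is built from $\drefl \pi$ rather than from the ordinary differential become decisive: the comparison relies on the projection formula, on the $\pi$-exceptional locus having codimension at least two in $\wtilde X$, and on the careful slope-preserving behaviour of the reflexive pull-back analysed in Section~\ref{ssect:restrict}. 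Handling all of this, and showing that semistability with respect to $\pi^* H + \varepsilon A$ descends to semistability with respect to $\pi^* H$ as $\varepsilon \to 0$, is precisely what the proof of Theorem~\ref{thm:restriction} needs to accomplish.
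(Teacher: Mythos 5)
The paper itself gives no proof of Theorem~\ref{thm:restriction}: it is quoted from \cite[Theorem~6.22]{GKPT15} with only a one-sentence description of the strategy, namely that the (``rather long and protracted'') argument resolves singularities, cuts down, and feeds the result into \emph{Langer's generalised Bogomolov--Gieseker inequalities for sheaves with operators}; Langer's restriction theorem \cite[Theorem~10]{MR3314517} is explicitly mentioned only as a \emph{similar} result, not as the input. Your plan follows the same broad outline (Bertini for Part~(1), resolution plus Construction~\ref{cons:pb2} plus Langer for Part~(2)), but it black-boxes a different theorem of Langer's, and in doing so it runs into a genuine gap.

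The gap is the mismatch between the polarisation and the linear system. A Mehta--Ramanathan-type restriction theorem for a polarisation $L$ produces stability of restrictions to general members of $|mL|$; the polarisation used to measure slopes and the linear system from which the hyperplane is drawn are the \emph{same} class. If you apply Langer's theorem on $\wtilde X$ with the ample class $π^*H + \varepsilon A$, the conclusion concerns general members of $|m(π^*H+\varepsilon A)|$, which do not descend to $X$ and have nothing to do with the divisors $π^*D$, $D ∈ |mH|$, that you actually need. To restrict to members of $|mπ^*H|$ you need a restriction theorem for the merely nef-and-big class $π^*H$ itself, and your perturbation-plus-``transfer of semistability'' step does not supply this: apart from going in the wrong direction (you need to pass \emph{from} stability with respect to $π^*H$ \emph{to} the perturbed class in order to invoke an ample-polarisation statement, and boundedness of potential destabilisers is exactly what fails for a non-ample class), it does not change which hyperplanes the theorem speaks about. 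This is precisely the difficulty that forces \cite{GKPT15} to work with Langer's Bogomolov--Gieseker inequalities for sheaves with operators and with polarisations by tuples of nef divisors rather than with an off-the-shelf ample restriction theorem. Your final paragraph moreover concedes that the descent of (generic) $θ$-invariance and of slopes through $\drefl π$ ``is precisely what the proof needs to accomplish'', so the hardest compatibility is deferred rather than proved. As it stands the proposal is a reasonable plan whose central step does not yet close.
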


For Higgs bundles on manifolds with ample polarisation, the theorem appears in
Simpson's work, \cite[Lemma~3.7]{MR1179076}.  Langer proves a similar theorem
for sheaves on projective manifolds, polarised by tuples of divisors that need
not be ample, \cite[Theorem~10]{MR3314517}.  He works in positive characteristic
but says that \emph{mutatis mutandis}, his arguments will also work in
characteristic zero, cf.\ \cite[Page~906]{MR3314517}.

%%% Local Variables:
%%% mode: latex
%%% TeX-master: "SLC"
%%% End:

\part{Proof of the main results}
%
% Do not edit the following line.  The text is automatically updated by
% subversion.
%
\svnid{$Id: 07-torus.tex 116 2016-08-18 12:12:37Z kebekus $}

\section{Characterisation of torus quotients}
\approvals{Behrouz & yes \\Daniel & yes \\Stefan & yes}
\label{ssec:toriq}

In this section we will very briefly sketch the proof of Theorem~\ref{thm:TQ} on
the uniformisation of singular varieties with vanishing Chern classes by the
Euclidean space.  There are various similarities and some crucial differences
between the methods required for the proof of the two uniformisation results,
Theorems~\ref{thm:TQ} and \ref{thm:BQ}.  Our hope is that a comparison between
the two proofs would prove useful in clarifying the main ideas and techniques
behind both results.  We have therefore chosen to present an outline of the
proof following the strategy of~\cite{GKP13}, even though this is covered in at
least one other survey, \cite[Section~9]{KP14}.  We remark that the case of
canonical threefolds with vanishing Chern classes was achieved by
Shepherd-Barron and Wilson in~\cite{SBW94}.  Theorem~\ref{thm:TQ} has been
generalised to klt spaces in \cite{LT14}, providing a complete numerical
characterisation of quotients of Abelian varieties by finite groups acting
freely in codimension one.  Both of these latter results require working with
orbifold Chern classes\footnote{or ``$ℚ$-Chern classes''} which would require a
rather lengthy preparation and technical details that, for the sake of
simplicity, we have decided to avoid in the current article.

\subsection{Outline of the proof of Theorem~\ref*{thm:TQ}}\label{subsect:torusproof}
\approvals{Behrouz & yes \\Daniel & yes \\Stefan & yes}

The proof consists of two main steps.  Our aim in the first step, which is
modelled on the strategy of \cite{MR84}, is to construct a reflexive sheaf $\sF$
on $X$, formed as the coherent extension of a flat, locally-free, analytic sheaf
on $X_{\reg}$, that verifies the isomorphism $\sF|_S\cong \sT_X|_S$, for a
complete intersection surface $S$ cut out by general members of linear systems
of sufficiently large multiples of $H$.  In the second step we use the
aforementioned sheaf isomorphism on $S$ to find a global isomorphism
$\sF \cong \sT_X$.  Of course when $X$ is smooth, this already implies that
$\sT_X$ is flat.  When $X$ is singular, one then needs a method to extend the
flatness of $\sT_{X_{\reg}}$ across the singular locus.  According to
Theorem~\ref{thm:flat:2} this can be achieved when the singularities are mild,
at least up to a suitable cover.  This is the main ingredient of the second
step.

\subsubsection*{Step~1: Construction of a flat sheaf on $X_{\reg}$}
\approvals{Behrouz & yes \\Daniel & yes \\Stefan & yes}

We first notice that owing to the celebrated generic semipositivity result of
Miyaoka~\cite{Miyaoka87} we know that $\sT_X$ is slope-semistable with respect
to $H$.  Next, choose a sufficiently large and divisible integer $m \gg 0$, and
choose a general tuple of hyperplanes $D_1, …, D_{n-2} ∈ |m·H|$, with general
complete intersection surface $S := D_1 ∩ \cdots ∩ D_{n-2}$.  The following
items will then hold.

\CounterStep
\begin{enumerate}
\item\label{il:whitehorse0} The intersection $S$ is a smooth surface, and
  entirely contained in $X_{\reg}$.  This is because $X$ is smooth in
  codimension two by assumption.

\item\label{il:redhorse0} The restriction $\sT_X|_S$ is semistable with respect
  to $H|_S$.  This follows from Flenner's Mehta-Ramanathan theorem for normal
  varieties, \cite[Theorem~1.2]{Flenner84}.

\item\label{il:blackhorse0} The natural morphism $ι_*: π_1(S) → π_1(X_{\reg})$,
  induced by the inclusion $ι: S \into X_{\reg}$, is isomorphic.  This is the
  content of Goresky-MacPherson's Lefschetz hyperplane sheorem for homotopy
  groups, \cite[Theorem in Section~II.1.2]{GoreskyMacPherson}.

\item\label{il:dappledhorse0} Let $\sF°$ be any locally free, flat, analytic
  sheaf on $X_{\reg}$ with $\rank \sF = n$.  Then, $\sF°$ is isomorphic to
  $\sT_{X_{\reg}}$ if and only if the restrictions $\sF°|_S$ and
  $\sT_{X_{\reg}}|_S$ are isomorphic.  This follows because flat sheaves of
  fixed rank form a bounded family, \cite[Proposition~9.1]{GKP13}, and because
  of the Bertini-type theorem for isomorphism classes in bounded families,
  \cite[Corollary~5.3]{GKP13}.
\end{enumerate}
Now according to~\cite[Corollary~3.10]{MR1179076} the semistability of
$\sT_X|_S$ together with the vanishing condition on its Chern classes imply that
$\sT_X|_S$ comes from a representation of $π_1(S)$.  Item~\ref{il:blackhorse0}
now allows to extend this to a representation of $π_1(X_{\reg})$.  In other
words, we find a locally-free, flat bundle $\sF°$ on $X_{\reg}$ such that
$\sF°|_S \cong \sT_X|_S$.  Define $\sF := ι_* \sF°$, where $ι : X_{\reg} → X$ is
the inclusion map.  The sheaf $\sF$ is then coherent, and in fact reflexive on
$X$.

\subsubsection*{Step~2: Reduction to the smooth case}
\approvals{Behrouz & yes \\Daniel & yes \\Stefan & yes}

As $\sF°$ is flat, Item~\ref{il:dappledhorse0} applies and we find that
$\sF° \cong \sT_{X_{\reg}}$.  Now, let $γ: \wtilde X → X$ be a maximal
quasi-étale cover, as given by Theorem~\ref{thm:flat:2}.  Since $γ$ is
unramified in codimension one, $\sT_{\wtilde X} \cong γ^{[*]}(\sT_X)$.  As one
consequence, we see that $\sT_{\wtilde X}$ is flat over
$\wtilde X° := γ^{-1}(X_{\reg})$, which is a big subset of $\wtilde X$.  We also
see that
$$
K_{\wtilde X}\equiv 0 \quad\text{and}\quad c_2 \bigl(\sT_{\wtilde X})·
[γ^*H]^{n-2}=0,
$$
where the last equality is a consequence of the projection formula.  According
to Theorem~\ref{thm:flat:2}, the sheaf $\sT_{\wtilde X}$ comes from a
representation of $π_1(\wtilde X)$.  In particular, it is locally-free.  Now
thanks to the solution to the Lipman-Zariski conjecture, Theorem~\ref{thm:LZ:1},
we find that $\wtilde X$ is smooth.  Theorem~\ref{thm:TQ} now follows from the
original result of Yau, Theorem~\ref{thm:YausTheorem}.  \qed

%%% Local Variables:
%%% mode: latex
%%% TeX-master: "SLC"
%%% End:

%
% Do not edit the following line.  The text is automatically updated by
% subversion.
%
\svnid{$Id: 08-MY.tex 116 2016-08-18 12:12:37Z kebekus $}

\section{Proof of the Miyaoka-Yau inequality}
\label{sec:MY}

\subsection{Proof of Theorem~\ref*{thm:MYinequality} in a simplified setting}
\approvals{Behrouz & yes \\Daniel & yes \\Stefan & yes}

For the purposes of this survey, we prove Theorem~\ref{thm:MYinequality} only
under the following simplifying assumptions.  Section~\ref{ssec:pigc1} briefly
discusses the missing pieces for a proof in the general case.

\begin{assumption}\label{ass:simpl1}
  The canonical bundle $K_X$ is ample, and $X$ is smooth in codimension two.  In
  particular, Chern classes $c_1$ and $c_2$ exist.
\end{assumption}

\subsubsection*{Step 1: Setup.  The natural Higgs sheaf on $X$}
\approvals{Behrouz & yes \\Daniel & yes \\Stefan & yes}

We begin by considering the natural Higgs sheaf $(\sE, θ)$, as given in
Example~\ref{ex:BQfield1}, where $\sE = Ω^{[1]}_X ⊕ \sO_X$ and
$θ(a + b) = (0+1)⊗a$.  The main reason for our interest in $(\sE, θ)$ is the
observation that the Bogomolov-Gieseker discriminant $Δ(\sE)$ computes the
Miyaoka-Yau discriminant of $\sT_X$.  Indeed, we have
\begin{align*}
  Δ(\sE) · [K_X]^{n-2} & := \bigl(2(\rank \sE) · c_2(\sE_X) - ((\rank \sE)-1) · c_1^2(\sE_X)\bigr) · [K_X]^{n-2} \\
  & = \bigl(2(n+1) · c_2(\sT_X) - n · c_1^2(\sT_X) \bigr) · [K_X]^{n-2}.
\end{align*}
To establish the Miyaoka-Yau inequality for $\sT_X$, it will therefore suffice
to show that $\sE$ verifies the Bogomolov-Gieseker inequality, $Δ(\sE) ≥ 0$.
This will follow from a major result of Simpson, who verified the
Bogomolov-Gieseker inequality for Higgs bundles that are stable with respect to
an ample polarisation, \cite[Theorem~1 and Proposition~3.4]{MR944577}.  To apply
Simpson's result, we need to show that $(\sE,θ)$ is stable with respect to
$K_X$, and then cut down to reduce to the case of a Higgs bundle (rather than a
mere sheaf) on a smooth surface.

\subsubsection*{Step 2: Stability}
\approvals{Behrouz & yes \\Daniel & yes \\Stefan & yes}

Generalising a classical result of Enoki, \cite[Corollary~1.2]{Eno87}, Guenancia
\cite[Theorem~A]{Guenancia} has shown that the tangent sheaf of a klt projective
variety with ample canonical sheaf is necessarily polystable.  Projecting a
potentially destabilising, generically $θ$-invariant subsheaf of $(\sE, θ)$ to
the $\sO_X$-summand of $\sE$ and recalling from Example~\ref{ex:BQfield1} that
no subsheaf of the direct summand $Ω^{[1]}_X$ is ever generically $θ$-invariant,
we deduce the following result, see \cite[Corollary 8.2]{GKPT15}.

\begin{lem}\label{lem:1}
  The Higgs sheaf $(\sE,θ)$ is stable with respect to $K_X$.  \qed
\end{lem}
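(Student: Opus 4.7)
The plan is to argue by contradiction along the route sketched in the paragraph preceding the lemma: starting from any hypothetical destabilising, generically $θ$-invariant subsheaf $\sF \subseteq \sE$ of intermediate rank, I would project to the $\sO_X$-summand and derive a slope contradiction from the combination of Guenancia's polystability theorem and the structural observation of Example~\ref{ex:BQfield1}.

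First, I would record two numerical facts. Since $\det \sE = \det Ω^{[1]}_X$ has Weil divisor class $K_X$, the definition of slope yields $\mu_{K_X}(\sE) = [K_X]^n / (n+1)$, whereas $\mu_{K_X}(Ω^{[1]}_X) = [K_X]^n / n$. The key external input is that $\sT_X$ is polystable by Guenancia's theorem, which, via reflexive duality, implies that $Ω^{[1]}_X$ is $K_X$-polystable, so that every nonzero subsheaf $\sK \subseteq Ω^{[1]}_X$ satisfies $[\sK] \cdot [K_X]^{n-1} \leq \rank \sK \cdot [K_X]^n / n$.

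Next, I would run the case analysis on the projection $π : \sE \to \sO_X$ restricted to $\sF$. If $π|_\sF = 0$, then $\sF$ is a nonzero subsheaf of the $Ω^{[1]}_X$-summand; but Example~\ref{ex:BQfield1} notes that such a subsheaf is never generically $θ$-invariant, contradicting the hypothesis. Otherwise, set $\sK := \sF \cap Ω^{[1]}_X$ and $\sI := π(\sF)$, a nonzero rank-one torsion-free subsheaf of $\sO_X$ with $[\sI] \cdot [K_X]^{n-1} \leq 0$. The short exact sequence $0 \to \sK \to \sF \to \sI \to 0$, combined with the additivity of Weil divisor classes in codimension one and the two slope bounds above, yields
$$
\mu_{K_X}(\sF) \leq \frac{[\sK] \cdot [K_X]^{n-1}}{\rank \sK + 1} \leq \frac{\rank \sK}{\rank \sK + 1} \cdot \frac{[K_X]^n}{n} < \frac{[K_X]^n}{n+1} = \mu_{K_X}(\sE),
$$
where the final strict inequality follows from $\rank \sK \leq n - 1$, a consequence of $\rank \sF \leq n$.

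Conceptually, the only nontrivial input is Guenancia's polystability result, which is precisely where the assumption that $X$ is klt with $K_X$ ample enters. Everything else is straightforward slope bookkeeping, built on the rigidity of the Higgs field $θ$ encoded in Example~\ref{ex:BQfield1}. A minor technicality is that intersection numbers must be computed via Weil divisor classes (since $X$ is only assumed smooth in codimension two), but since all the sheaves involved are torsion-free and the relevant exact sequence is exact on a big open subset of $X$, the additivity $[\sF] = [\sK] + [\sI]$ still holds and the argument goes through unchanged.
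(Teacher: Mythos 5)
Your proposal is correct and follows exactly the route the paper sketches in the paragraph preceding the lemma: Guenancia's polystability of $\sT_X$ (hence semistability of $\Omega^{[1]}_X$), projection of a would-be destabilising, generically $\theta$-invariant subsheaf onto the $\sO_X$-summand, and the observation from Example~\ref{ex:BQfield1} that nonzero subsheaves of the summand $\Omega^{[1]}_X$ are never generically $\theta$-invariant. The remaining slope bookkeeping --- additivity of Weil divisor classes on a big open set, $[\sI]\cdot[K_X]^{n-1}\le 0$ for an ideal sheaf, and the strict inequality coming from $\rank \sK \le n-1$ --- is precisely the content of the reference \cite[Corollary 8.2]{GKPT15} to which the paper defers.
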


\subsubsection*{Step 3: End of proof}
\approvals{Behrouz & yes \\Daniel & yes \\Stefan & yes}

Choose a sufficiently large and divisible integer $m \gg 0$, and choose a
general tuple of hyperplanes $H_1, …, H_{n-2} ∈ |m·K_X|$, with general complete
intersection surface $S := H_1 ∩ \cdots ∩ H_{n-2}$.  Using the assumption that
$X$ is smooth in codimension two, the surface $S$ is smooth, and entirely
contained in the smooth locus of $X$.  In particular, $(\sE,θ)|_S$ is a Higgs
bundle.  A repeated application of the restriction theorem for stable Higgs
sheaves, Theorem~\ref{thm:restriction}, shows that $(\sE,θ)|_S$ is stable with
respect to $K_X|_S$, and Simpson's result \cite[Theorem~1 and
Proposition~3.4]{MR944577} applies to give that
$$
0 ≤ Δ(\sE|_S) = \frac{Δ(\sE|_S)·[K_X]^{n-2}}{m^{n-2}}.
$$
As we have seen in Step~1, this finishes the proof of
Theorem~\ref{thm:MYinequality} in the simplified setting of
Assumption~\ref{ass:simpl1}.  \qed

\subsection{Proof of Theorem~\ref*{thm:MYinequality} in the general case}
\approvals{Behrouz & yes \\Daniel & yes \\Stefan & yes}
\label{ssec:pigc1}

The proof in the general case works along the same lines as the proof presented
above.  However, there are two problems that need to be overcome.

\subsubsection{The canonical sheaf might not be ample}
\approvals{Behrouz & yes \\Daniel & yes \\Stefan & yes}

By assumption, the canonical divisor $K_X$ is nef and not necessarily ample.  It
can, however, be approximated by ample divisors.  This adds an additional layer
of complexity but causes no fundamental problems, because Simpson's theory works
with arbitrary ample divisors, which may or may not equal $K_X$.

\subsubsection{The variety is not necessarily smooth in codimension two}
\approvals{Behrouz & yes \\Daniel & yes \\Stefan & yes}

The proof presented above used that assumption that $X_{\reg}$ is a big set.
That is not necessarily true in the general setting.  It follows as a
consequence of the classification of klt surface singularities, however, that
there exists a big set $X° ⊆ X$ where $X°$ has only quotient singularities,
\cite[Proposition~9.3]{GKKP11}.  The full proof of
Theorem~\ref{thm:MYinequality} uses $X°$ in lieu of $X_{\reg}$.  This leads to
fundamental complications.  Following Mumford's seminal paper \cite{MR717614},
the discussion of orbifold Chern classes forces us discuss $X°$ as a
$ℚ$-variety, and to consider global covers of big open subsets of $X°$, which
can be chosen to be Cohen-Macaulay, but not necessarily to be smooth.  We need
to show that all our notions, Higgs sheaves in particular, behave well under the
elementary operation of Mumford's program; ditto for some of Simpson's
constructions and result.  In particular, we need to show that Higgs sheaves can
be pulled into the $ℚ$-variety structure, and from there to any Cohen-Macaulay
cover, and any resolution thereof.  This setting also forces us to develop our
whole theory in the equivariant setting, for varieties with actions of the
appropriate Galois groups.  The failure of reflexive pull-back to have any
functorial properties is a main obstacle there.  For details the reader is
referred to \cite{GKPT15}.

%%% Local Variables:
%%% mode: latex
%%% TeX-master: "SLC"
%%% End:

%
% Do not edit the following line.  The text is automatically updated by
% subversion.
%
\svnid{$Id: 09-uniform.tex 116 2016-08-18 12:12:37Z kebekus $}

\section{Characterisation of singular ball quotients}
\label{sec:bq}

\subsection{Smoothness criterion}
\label{ssec:scpc3}
\approvals{Behrouz & yes \\Daniel & yes \\Stefan & yes}

The following smoothness criterion is the centrepiece in our proof of the
uniformisation result, Theorem~\ref{thm:BQ}.  Before returning to the proof of
Theorem~\ref{thm:BQ} in Section~\ref{ssec:prtmss} below, we will therefore
discuss its proof in some detail.

\begin{prop}[\protect{Smoothness criterion, \cite[Proposition~9.3]{GKPT15}}]\label{prop:c3}
  Let $Y$ be a projective variety of dimension $n$ that is smooth in codimension
  two and has at worst canonical singularities.  Assume furthermore that the
  étale fundamental groups of $Y$ and of its smooth locus agree,
  $\what{π}_1(Y_{\reg}) \cong \what{π}_1(Y)$.  If $K_Y$ is ample and if equality
  holds in the Miyaoka-Yau inequality~\eqref{eq:X2}, then $Y$ is smooth.
\end{prop}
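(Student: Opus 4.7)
The plan is to mimic the strategy of the torus-quotient proof in Section~\ref{subsect:torusproof}, with the generic semipositivity input replaced by the Miyaoka--Yau equality and Guenancia's stability theorem, and with the tangent sheaf replaced by the natural Higgs sheaf $(\sE, θ)$ of Example~\ref{ex:BQfield1}, where $\sE = Ω^{[1]}_Y ⊕ \sO_Y$. The ultimate target is to show that $Ω^{[1]}_Y$ is locally free, at which point Theorem~\ref{thm:LZ:1} (Lipman--Zariski for klt spaces) forces $Y$ to be smooth.

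First I would verify that $(\sE, θ)$ is $K_Y$-stable: this combines Guenancia's polystability of $\sT_Y$ with the projection-to-$\sO_Y$ argument of Lemma~\ref{lem:1}. A direct Chern-class computation then identifies the Bogomolov--Gieseker discriminant of $\sE$ with the Miyaoka--Yau discriminant appearing in~\eqref{eq:X2}, so the equality hypothesis translates into $Δ(\sE)·[K_Y]^{n-2}=0$. For $m \gg 0$ and a general complete-intersection surface $S = H_1 ∩ \cdots ∩ H_{n-2}$ with $H_i ∈ |mK_Y|$, the smoothness of $Y$ in codimension two places $S$ inside $Y_{\reg}$ and makes $S$ smooth, while the restriction theorem~\ref{thm:restriction} guarantees that $(\sE|_S, θ|_S)$ is a stable Higgs bundle on $S$ with $Δ(\sE|_S) = 0$. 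After twisting by a (rational) line bundle $\sL$ that normalises the first Chern class---or, equivalently, working with $\sEnd(\sE)$---Simpson's nonabelian Hodge correspondence produces a representation of $π_1(S)$ whose associated flat holomorphic bundle is isomorphic to $(\sE ⊗ \sL)|_S$.

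I would then spread this flatness first to $Y_{\reg}$ and then to $Y$ itself. The Goresky--MacPherson Lefschetz hyperplane theorem for homotopy groups, applied inside the smooth manifold $Y_{\reg}$, gives $π_1(S) \xrightarrow{\sim} π_1(Y_{\reg})$, so the representation extends to a flat, locally free analytic sheaf $\sF°$ on $Y_{\reg}^{an}$ with $\sF°|_S \cong (\sE ⊗ \sL)|_S$. Using that flat sheaves of fixed rank form a bounded family and that isomorphism classes in such a family are detected by restriction to a general $S$ (the Bertini-type statement \cite[Corollary~5.3]{GKP13} used in the torus proof), we deduce $\sF° \cong (\sE ⊗ \sL)|_{Y_{\reg}}$ on all of $Y_{\reg}$. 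At this point the hypothesis $\what{π}_1(Y_{\reg}) \cong \what{π}_1(Y)$ enters decisively: Theorem~\ref{thm:flat:1} extends $\sF°$ to a flat, locally free analytic sheaf $\sF$ on $Y^{an}$, which agrees with $(\sE ⊗ \sL)^{an}$ on the big open set $Y_{\reg}^{an}$; since both are reflexive on $Y^{an}$, they coincide globally, and GAGA yields that $\sE ⊗ \sL$---and hence $\sE$---is locally free on $Y$. The direct summand $Ω^{[1]}_Y ⊆ \sE$ of a locally free sheaf is itself locally free, so $\sT_Y$ is locally free, and Theorem~\ref{thm:LZ:1} concludes.

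The hard part will be carrying the $c_1$-normalising twist compatibly through the last three steps. A genuine integral line bundle $\sL$ with $(n+1)·\sL \equiv -K_Y$ need not exist on $Y$, so one must either work in an orbifold or $ℚ$-line-bundle framework, or avoid the twist by first establishing flatness of $\sEnd(\sE)$---which has trivial first Chern class and the same discriminant---and then upgrading to local freeness of $\sE$ itself via standard projectively-flat tricks. Ensuring that the twist survives both the Bertini-style uniqueness argument on $Y_{\reg}$ and the extension via Theorem~\ref{thm:flat:1} without disturbing the isomorphism on $S$ is where the genuine technical bookkeeping of the proof is concentrated.
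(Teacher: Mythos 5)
Your setup (the natural Higgs sheaf $(\sE,θ)$ with $\sE=Ω^{[1]}_Y⊕\sO_Y$, its $K_Y$-stability via Guenancia and Lemma~\ref{lem:1}, the identification of $Δ(\sE)·[K_Y]^{n-2}$ with the Miyaoka--Yau discriminant, the cut-down to $S⊆ Y_{\reg}$ via Theorem~\ref{thm:restriction}) and your endgame (local freeness of $\sEnd(\sE)$, hence of $\sT_Y$, hence smoothness by Theorem~\ref{thm:LZ:1}) both match the paper. But the mechanism you propose for carrying local freeness from $S$ to $Y$ has a genuine gap: you treat the ball-quotient case as if it were the torus case, and the two differ exactly where your argument needs them to agree. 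Simpson's correspondence applied to the stable Higgs bundle $\sEnd(\sE)|_S$ (with vanishing Chern classes) does produce a representation of $π_1(S)$, but the flat holomorphic bundle of that representation is \emph{not} holomorphically isomorphic to $\sEnd(\sE)|_S$ unless the Higgs field vanishes --- the two carry the same $\cC^\infty$ bundle with different $\overline{\partial}$-operators. Here the Higgs field is genuinely non-zero, and in fact $\sEnd(\sE)|_S$ cannot be flat: it contains $Ω^1_S$ as a direct summand of positive $K_Y|_S$-slope, so it is not even slope-semistable as an ordinary bundle, whereas flat bundles are. Consequently your Bertini-type comparison on $Y_{\reg}$ would show $\sF°\not\cong\sEnd(\sE)|_{Y_{\reg}}$, and Theorem~\ref{thm:flat:1} --- which extends \emph{flat} sheaves across the singularities --- never gets a sheaf it can be applied to. (In the torus proof the Higgs field is zero and $\sT_X|_S$ itself is flat, which is why that template works there.)

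The paper's proof replaces this step by a different extension mechanism. One shows that $(\sEnd(\sE)|_S,Θ_S)$ is not merely semistable with vanishing Chern classes but is \emph{induced by a \pCVHS}, using the $ℂ^*$-fixed-point criterion of Theorem~\ref{thm:charPCVHS} (polystability from Hermitian--Yang--Mills metrics, vanishing of $c_2$ from the Miyaoka--Yau equality, and $ℂ^*$-invariance because $\sE$ is a system of Hodge bundles). One then extends this \pCVHS{} not to $Y$ but to a strong log resolution $\wtilde Y$, via Corollary~\ref{cor:pfccs} together with Takayama's theorem $π_1(\wtilde Y)\cong π_1(Y)$, obtaining a Higgs bundle $(\sF_{\wtilde Y},Θ_{\wtilde Y})$ induced by a \pCVHS{}. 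The descent from $\wtilde Y$ to $Y$ is then achieved by Proposition~\ref{prop:higgsfromdownst}: the local period maps factor through $π$ because the fibres of the resolution are rationally chain-connected and the period domain is Brody-hyperbolic, so $\sF_{\wtilde Y}=π^*\sF_Y$ for a locally free $\sF_Y$ on $Y$. Finally, the Bertini-type identification $\sF_Y\cong\sEnd(\sE)$ is carried out inside the bounded family ${\sf B}_r$ of locally free sheaves on the projective variety $Y$ (Claim~\ref{claim:flatbounded}), not inside a family of flat sheaves on $Y_{\reg}$. Your closing remarks about the $c_1$-normalising twist correctly anticipate that one should work with $\sEnd(\sE)$ rather than $\sE⊗\sL$, but the missing idea is the passage through the \pCVHS{}, the resolution, and the period-map factorisation; without it the extension across $Y_{\sing}$ does not go through.
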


Here are the main steps of the proof, which is taken almost verbatim from
\cite[Section~9.2]{GKPT15}.  The main object of study is the canonical Higgs
sheaf $(\sE_Y,θ_Y)$ on $Y$.  In Step~1 we consider this system.  In analogy to
Section~\ref{ssec:toriq}, we fix a complete intersection surface $S$ that
verifies various properties required in the next steps.  This includes
satisfying the property that $(\sE_Y,θ_Y)|_S$ is stable and that a Lefschetz
hyperplane theorem holds.  In Step~2 we construct a \pCVHS{} on $S$ out of this
data, whose induced Higgs bundle is $\sEnd(\sE_Y)|_S$.  It goes without saying
that Simpson's result on the existence of Hermitian-Yang-Mills metrics for
stable Higgs bundles is the key ingredient here.  In Step~3 we extend this
\pCVHS{} to a Higgs bundle $(\sF_{\wtilde Y},θ_{\wtilde Y})$ on a resolution
$\wtilde Y$ of $Y$ and consider local period maps
$$
ρ: \bigl\{\text{1-connected subset of $\wtilde Y$}\bigr\} → \{\text{period domain}\}
$$
Thanks to a factorisation via the period domain,
Proposition~\ref{prop:higgsfromdownst}, we know that $\sF_{\wtilde Y}$ comes
from a locally free sheaf $\sF_Y$ on $Y$.  In the final step we prove that
$\sEnd(\sE_Y) \cong \sF_Y$.  It follows that $\sEnd(\sE_Y)$ is locally free and
then so is $\sT_Y$.  Proposition~\ref{prop:c3} thus follows from the
Lipman-Zariski conjecture for varieties with canonical singularities,
Theorem~\ref{thm:LZ:1}.

We will now go through the steps in more detail.  We aim to present the proof in
a way such that the parallels to Section~\ref{ssec:toriq} become obvious.

\subsubsection*{Step~1: Setup}
\approvals{Behrouz & yes \\Daniel & yes \\Stefan & yes}

We begin by considering the natural Higgs sheaf $(\sE_Y, θ_Y)$, as given in
Example~\ref{ex:BQfield1}, where $\sE_Y = Ω^{[1]}_Y ⊕ \sO_Y$ and
$θ(a + b) = (0+1)⊗a$.  By Lemma~\ref{lem:1} the Higgs sheaf $(\sE_Y, θ_Y)$ is
stable with respect to the ample bundle $K_Y$.

Choose a strong log resolution of singularities, $π : \wtilde Y → Y$, such that
there exists a $π$-ample Cartier divisor supported on the exceptional locus of
$π$.

\begin{claim}\label{claim:flatbounded}
  Write $r := (n+1)²$.  Let ${\sf B}_r$ denote the set of locally free sheaves
  $\sF$ on $X$ that have rank $r$, satisfy
  $μ^{\max}_{K_Y}(\sF) = μ^{\max}_{K_Y}(\sEnd \sE_Y)$, and have Chern classes
  $c_i\bigl(π^* \sF\bigr) = 0$ for all $0 < i ≤ r$.  Then, ${\sf B}_r$ is
  bounded.
\end{claim}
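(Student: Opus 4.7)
The plan is to transfer the boundedness problem from $Y$ to the smooth resolution $\wtilde Y$ and then apply Langer's general boundedness theorem for torsion-free sheaves of fixed Hilbert polynomial and uniformly bounded maximal slope, \cite{MR1954067}.

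First, since $Y$ has canonical, hence rational, singularities, the projection formula yields $\pi_*\pi^*\sF \cong \sF$ for every locally free $\sF$ on $Y$. The pull-back $\pi^*\sF$ therefore determines $\sF$, and it suffices to prove that the family $\{\pi^*\sF : \sF \in {\sf B}_r\}$ is bounded on $\wtilde Y$. Next, I would fix an ample polarisation on $\wtilde Y$ that is close to $\pi^* K_Y$. By choice of $\pi$ there exists an effective Cartier divisor $E$ supported on the exceptional locus with $-E$ being $\pi$-ample; since $K_Y$ is ample on $Y$, the divisor $A := N\cdot \pi^* K_Y - E$ is ample on $\wtilde Y$ for all $N \gg 0$. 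The hypothesis that $c_i(\pi^*\sF) = 0$ for $0 < i \leq r = \rank \pi^*\sF$ fixes the Chern character of $\pi^*\sF$ completely, so its $A$-Hilbert polynomial depends only on $r$ and on $(\wtilde Y,A)$, and in particular $\mu_A(\pi^*\sF) = 0$.

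The remaining task, in order to apply Langer's theorem, is to bound $\mu^{\max}_A(\pi^*\sF)$ uniformly in $\sF \in {\sf B}_r$. Given a saturated coherent subsheaf $\widetilde \sG \subseteq \pi^*\sF$ of intermediate rank, I would set $\sG := (\pi_* \widetilde \sG)^{**} \subseteq \sF$. On the complement of $\Exc(\pi)$ the sheaves $\widetilde \sG$ and $\pi^*\sG$ agree, so expanding
$$
c_1(\widetilde \sG) \cdot A^{n-1} \;=\; c_1(\widetilde \sG) \cdot \bigl(N\cdot \pi^* K_Y - E\bigr)^{n-1}
$$
binomially gives a leading term $N^{n-1}\cdot c_1(\sG) \cdot K_Y^{n-1}$ plus correction terms involving intersections of $c_1(\widetilde \sG)$ with the fixed exceptional divisor $E$. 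The hypothesis $\mu_{K_Y}(\sG) \leq \mu^{\max}_{K_Y}(\sF) = \mu^{\max}_{K_Y}(\sEnd\sE_Y)$ bounds the leading term, and once the correction terms are under control one obtains the required uniform upper bound on $\mu^{\max}_A(\pi^*\sF)$. Langer's boundedness result then applies to the family on $(\wtilde Y, A)$ and the claim follows.

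The main obstacle is the uniform control of the exceptional correction terms $c_1(\widetilde \sG)\cdot E^k \cdot (\pi^* K_Y)^{n-1-k}$ for $k \geq 1$. Although $E$ is fixed, these intersection numbers depend on $\widetilde \sG$ and are not immediately bounded by data on $Y$. One needs to exploit the $\pi$-ampleness of $-E$, which restricts the slope behaviour of $\widetilde \sG$ on fibres of $\pi$, together with the vanishing of the higher Chern classes of $\pi^*\sF$, to bound these contributions independently of the particular $\sF \in {\sf B}_r$. I expect this comparison of slopes between $A$ on $\wtilde Y$ and $K_Y$ on $Y$ to be the only non-formal part of the argument; the rest consists of a routine application of Langer's theorem.
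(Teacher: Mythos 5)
Your reduction to the resolution creates the very difficulty that you then leave unresolved, and that unresolved step is a genuine gap. To apply a boundedness criterion on $(\wtilde Y, A)$ you must bound $\mu^{\max}_A(\pi^*\sF)$ uniformly over $\sF \in {\sf B}_r$, and your only handle on slopes is the hypothesis $\mu^{\max}_{K_Y}(\sF) = \mu^{\max}_{K_Y}(\sEnd \sE_Y)$, which lives on $Y$. Passing from a saturated subsheaf $\wtilde{\sG} \subseteq \pi^*\sF$ to $\sG := (\pi_*\wtilde{\sG})^{**}$ controls only the part of $c_1(\wtilde{\sG})\cdot A^{n-1}$ coming from $\pi^*K_Y$; the exceptional contributions $c_1(\wtilde{\sG})\cdot E^k\cdot(\pi^*K_Y)^{n-1-k}$ depend on $\wtilde{\sG}$ and are not bounded by any data on $Y$. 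You name this as the ``main obstacle'' and offer only the expectation that $\pi$-ampleness of $-E$ will handle it; no argument is given, so the proof is not complete as written.

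The gap is avoidable because the detour through $\wtilde Y$ is unnecessary for the boundedness step. The sheaves $\sF$ live on the projective variety $Y$, and $K_Y$ is ample and $\bQ$-Cartier, so one can apply the boundedness criterion for families of sheaves with fixed Hilbert polynomial and bounded maximal slope --- \cite[Theorem~3.3.7]{MR2665168} --- \emph{directly on $Y$}, polarised by a Cartier multiple of $K_Y$. With that choice the required bound on the maximal slope is literally the defining condition $\mu^{\max}_{K_Y}(\sF) = \mu^{\max}_{K_Y}(\sEnd \sE_Y)$ of ${\sf B}_r$, and no comparison of slopes between $Y$ and $\wtilde Y$ is needed. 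The resolution enters only to show that the Hilbert polynomial is constant on ${\sf B}_r$: since $Y$ has rational singularities, $\chi_Y(\sG) = \chi_{\wtilde Y}(\pi^*\sG)$ for every locally free sheaf $\sG$ on $Y$, and Hirzebruch--Riemann--Roch on the smooth model $\wtilde Y$ together with the assumed vanishing $c_i(\pi^*\sF)=0$ for $0 < i \leq r$ shows that $\chi_Y(\sF \otimes \sO_Y(mK_Y))$ is a polynomial in $m$ depending only on $r$ and $Y$, cf.~\cite[Corollary~15.2.1]{Fulton98}. This is the route the paper takes; your first two observations (rationality of the singularities and the fixed Chern character upstairs) are exactly the right ingredients, but they should be used to compute the Hilbert polynomial \emph{on $Y$}, not to relocate the whole problem to $\wtilde Y$.
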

\begin{proof}[Proof of Claim~\ref{claim:flatbounded}]
  Since $X$ has rational singularities, the Euler characteristics $χ_X(\sG)$ and
  $χ_{\wtilde Y}(π^* \sG)$ agree for all locally free sheaves $\sG$ on $Y$.  The
  assumption on Chern classes thus guarantees that the Hilbert polynomials of
  the members $\sF ∈ {\sf B}_r$ are constant, cf.\
  \cite[Corollary~15.2.1]{Fulton98}.  Boundedness thus follows from
  \cite[Theorem~3.3.7]{MR2665168}.  This ends the proof of
  Claim~\ref{claim:flatbounded}.
\end{proof}

Next, choose a sufficiently large and divisible integer $m \gg 0$, and choose a
general tuple of hyperplanes $H_1, …, H_{n-2} ∈ |m·K_X|$, with general complete
intersection surface $S := H_1 ∩ \cdots ∩ H_{n-2}$.  The following items will
then hold.

\begin{enumerate}
\item\label{il:whitehorse} The intersection $S$ is a smooth surface, and
  entirely contained in $Y_{\reg}$.  This is because $Y$ is smooth in
  codimension two by assumption.

\item\label{il:redhorse} The restriction $(\sE_Y,θ_Y)|_S$ is stable with respect
  to $K_Y|_S$.  This follows from the Restriction Theorem~\ref{thm:restriction}.

\item\label{il:blackhorse} The natural morphism $ι_*: π_1(S) → π_1(Y_{\reg})$,
  induced by the inclusion $ι: S \into Y_{\reg}$, is isomorphic.  This is the
  content of Goresky-MacPherson's Lefschetz hyperplane theorem for homotopy
  groups, \cite[Theorem in Section~II.1.2]{GoreskyMacPherson}.

\item\label{il:dappledhorse} Let $\sF ∈ {\sf B}_r$.  Then, $\sF$ is isomorphic
  to $\sEnd \sE_Y$ if and only if the restrictions $\sF|_S$ and
  $(\sEnd \sE_Y)|_S$ are isomorphic.  This is a consequence of the boundedness
  statement in Claim~\ref{claim:flatbounded}, and of a Bertini-type theorem for
  isomorphism classes in bounded families \cite[Corollary~5.3]{GKP13}.
\end{enumerate}

\begin{claim}\label{claim:blackhorse}
  The natural morphism $π_1(S) → π_1(Y)$ is surjective and induces an
  isomorphism of profinite completions.
\end{claim}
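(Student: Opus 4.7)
\begin{proof}[Proof sketch of Claim~\ref{claim:blackhorse}]
The plan is to factor the natural morphism $π_1(S) → π_1(Y)$ through $π_1(Y_{\reg})$ using the inclusions $S ⊂ Y_{\reg} ⊂ Y$, and then to combine two inputs: Item~\ref{il:blackhorse} above, which identifies $π_1(S)$ with $π_1(Y_{\reg})$, and the hypothesis of Proposition~\ref{prop:c3}, which controls the profinite completion of $π_1(Y_{\reg})$ in terms of $π_1(Y)$.

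First I would establish surjectivity on fundamental groups. The inclusion $j : Y_{\reg} \into Y$ of the smooth locus of a normal complex variety always induces a surjection $j_* : π_1(Y_{\reg}) \onto π_1(Y)$, because $Y_{\sing}$ has complex codimension at least two, hence real codimension at least four, so any continuous loop and any homotopy between loops in $Y^{an}$ can be perturbed off $Y_{\sing}$ by a general transversality argument. Composing $j_*$ with the isomorphism $ι_* : π_1(S) \xrightarrow{\cong} π_1(Y_{\reg})$ from Item~\ref{il:blackhorse} yields the desired surjection $π_1(S) \onto π_1(Y)$.

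Second, I would pass to profinite completions. Since profinite completion is a functor, Item~\ref{il:blackhorse} already gives $\what{ι}_* : \what{π}_1(S) \xrightarrow{\cong} \what{π}_1(Y_{\reg})$. The standing assumption in Proposition~\ref{prop:c3} provides the isomorphism $\what{j}_* : \what{π}_1(Y_{\reg}) \xrightarrow{\cong} \what{π}_1(Y)$. The composition $\what{j}_* ◦ \what{ι}_*$ is precisely the profinite completion of the morphism $π_1(S) → π_1(Y)$ considered in the claim, so it is an isomorphism, finishing the proof.
\end{proof}

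The argument is routine once the two inputs are in place; the only point that requires any care is the topological surjectivity statement $π_1(Y_{\reg}) \onto π_1(Y)$, and this is standard for normal varieties.
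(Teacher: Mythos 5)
Your proposal is correct and follows essentially the same route as the paper: factor through $Y_{\reg}$, use Item~\ref{il:blackhorse} for $π_1(S) \cong π_1(Y_{\reg})$, and combine the surjectivity of $π_1(Y_{\reg}) → π_1(Y)$ with the standing assumption $\what{π}_1(Y_{\reg}) \cong \what{π}_1(Y)$. The only difference is that the paper simply cites \cite[0.7.B on Page~33]{FL81} for the surjectivity of $π_1(Y_{\reg}) → π_1(Y)$ rather than sketching a transversality argument (which, on a singular space, is better replaced by that reference, since $Y$ is not a manifold near $Y_{\sing}$).
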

\begin{proof}
  The natural morphism $π_1(Y_{\reg}) → π_1(Y)$ is surjective, \cite[0.7.B on
  Page~33]{FL81}, and induces an isomorphism of profinite completions by
  assumption.  Composed with the inclusion $S \hookrightarrow Y_{\reg}$,
  Claim~\ref{claim:blackhorse} follows from Item~\ref{il:blackhorse} above.
\end{proof}

\subsubsection*{Step~2: Construction of a \pCVHS\ on $S$}
\approvals{Behrouz & yes \\Daniel & yes \\Stefan & yes}

Since $S$ is entirely contained in the smooth locus of $Y$, the restricted Higgs
sheaf $(\sE_Y,θ_Y)|_S$ is actually a Higgs bundle, and
Construction~\ref{cons:dl} allows to equip the corresponding endomorphism bundle
with a Higgs field.  For brevity of notation, set
$\sF_S := \sEnd \bigl( \sE_Y \bigr)|_S$ and write $(\sF_S, Θ_S)$ for associated
Higgs bundle, constructed as in \ref{cons:dl}.  The rank of $\sF_S$ equals
$r = (n+1)²$.
  
\begin{claim}\label{claim:HVHS}
  The Higgs bundle $(\sF_S, Θ_S)$ is induced by a \pCVHS, in the sense of
  Definition~\ref{def:hscvhs}.
\end{claim}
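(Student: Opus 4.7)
The plan is to verify the three hypotheses of Theorem~\ref{thm:charPCVHS} for the Higgs bundle $(\sF_S, \Theta_S)$ on the smooth projective surface $S$---smooth and contained in $Y_{\reg}$ by Item~\ref{il:whitehorse}---namely $K_Y|_S$-polystability, vanishing of the two Chern-class intersection numbers $ch_1 \cdot [K_Y|_S]$ and $ch_2$, and $\bC^*$-fixedness of the isomorphism class.

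Item~\ref{il:redhorse} already gives that the restricted Higgs bundle $(\sE_Y, \theta_Y)|_S$ is $K_Y|_S$-stable. Since $(\sF_S, \Theta_S)$ is obtained from it via Construction~\ref{cons:dl} as the tensor product of $(\sE_Y, \theta_Y)|_S$ with its dual, polystability follows from Simpson's Hermitian-Yang-Mills correspondence \cite[Theorem~1]{MR944577}: a stable Higgs bundle carries a Hermitian-Yang-Mills metric, tensor products and duals of such bundles inherit induced Hermitian-Yang-Mills metrics, and any Hermitian-Yang-Mills Higgs bundle is polystable. Furthermore $ch_1(\sF_S) = 0$ holds automatically for an endomorphism bundle, and the standard splitting-principle identity $ch_2(\sEnd \sE) = -\bigl(2(\rank \sE) c_2(\sE) - (\rank \sE - 1)c_1(\sE)^2\bigr)$, applied to $\sE_Y$ and intersected with $[K_Y]^{n-2}$, converts the hypothesis of equality in the Miyaoka-Yau inequality (which, as recalled in Step~1 of Section~\ref{sec:MY}, says precisely that the bracketed expression for $\sE_Y$ pairs to zero against $[K_Y]^{n-2}$) into $ch_2(\sEnd \sE_Y) \cdot [K_Y]^{n-2} = 0$. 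Dividing by $m^{n-2}$ gives $ch_2(\sF_S) = 0$ on $S$.

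For $\bC^*$-fixedness I would exhibit an explicit scaling isomorphism. The bundle automorphism $\phi_t \in \Aut(\sE_Y)$ of $\sE_Y = \Omega^{[1]}_Y \oplus \sO_Y$ defined by $\phi_t(a + b) := a + t\cdot b$ satisfies $(t\theta_Y) \circ \phi_t = (\phi_t \otimes \Id) \circ \theta_Y$, and hence gives an isomorphism $(\sE_Y, \theta_Y) \cong (\sE_Y, t\theta_Y)$ of Higgs sheaves for every $t \in \bC^*$. Taking endomorphisms (which is functorial on isomorphisms of Higgs bundles) and restricting to $S$ yields $(\sF_S, \Theta_S) \cong (\sF_S, t\Theta_S)$. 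With all three hypotheses of Theorem~\ref{thm:charPCVHS} verified, Claim~\ref{claim:HVHS} follows. The principal obstacle is the first step---inheriting polystability of the endomorphism bundle from stability of $(\sE_Y, \theta_Y)|_S$---which depends essentially on Simpson's nonabelian Hodge theory; the Chern-class vanishing and the scaling automorphism are both essentially formal consequences of the geometric setup from Example~\ref{ex:BQfield1} and of the Miyaoka-Yau equality itself.
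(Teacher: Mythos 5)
Your proposal is correct and follows the same route as the paper: verify the three conditions of Theorem~\ref{thm:charPCVHS}, obtaining polystability of the endomorphism bundle from the Hermitian--Yang--Mills metric on the stable restriction $(\sE_Y,θ_Y)|_S$ and its dual, deducing the vanishing of $ch_2(\sF_S)$ from the assumed Miyaoka--Yau equality via $ch_2(\sEnd\sE_Y)=-Δ(\sE_Y)$, and establishing $ℂ^*$-fixedness from the Hodge-type grading of $\sE_Y$. The only cosmetic difference is that you write out the scaling automorphism $\phi_t$ explicitly (and your computation checks out), where the paper instead cites Simpson's general statement that systems of Hodge bundles are $ℂ^*$-fixed.
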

\begin{proof}[Proof of Claim~\ref{claim:HVHS}]
  We need to check the properties listed in Theorem~\ref{thm:charPCVHS}.

  \smallskip

  Item~\ref{il:emerson}: polystability with respect to $K_Y|_S$.  By
  Theorem~\ref{thm:restriction}, we know that both $(\sE_Y,θ_Y)|_S$ and its dual
  are $K_Y|_S$-stable Higgs bundles on the smooth surface $S$.  In particular,
  it follows from~\cite[Theorem~1(2)]{MR1179076} that both bundles carry a
  Hermitian-Yang-Mills metric with respect to $K_X|_S$, and thus so does
  $(\sF_S, Θ_S)$.  Hence it follows from~\cite[Theorem~1]{MR1179076} that
  $(\sF_S, Θ_S)$ is polystable with respect to $K_Y|_S$.

  \smallskip

  Item~\ref{il:lake}: vanishing of Chern classes.  As $\sF_S$ is the
  endomorphism bundle of the locally free sheaf $\sE_Y|_S$, its first Chern
  class vanishes.  Vanishing of $c_2(\sF_S)$ is then an immediate consequence of
  the assumed equality in \eqref{eq:X2}.  Together with polystability, this
  implies that $\sF_S$ is flat, \cite[Theorem~1]{MR1179076}, and hence all its
  Chern classes vanish.

  \smallskip

  Item~\ref{il:palmer}: we have seen in Example~\ref{ex:BQfield0} that $\sE_Y$
  has the structure of a system of Hodge bundles.  Its isomorphism class is
  therefore fixed under the action of $ℂ^*$, \cite[Page~45]{MR1179076}.
  Observing that the same holds for its dual and its endomorphism bundle, this
  ends the proof of Claim~\ref{claim:HVHS}.
\end{proof}

\subsubsection*{Step~3: Extension of the \pCVHS{} to a resolution}
\approvals{Behrouz & yes \\Daniel & yes \\Stefan & yes}

Since $S$ is entirely contained in the smooth locus of $Y$, it is canonically
isomorphic to its preimage $\wtilde{S} := π^{-1}(S)$ in the resolution
$\wtilde X$.  Let $(\sF_{\wtilde{S}}, Θ_{\wtilde{S}})$ be the Higgs bundle on
$\wtilde{S}$ that corresponds to $(\sF_S, Θ_S)$ under this isomorphism.

There exists a $ℚ$-divisor $E ∈ ℚ\Div(\wtilde Y)$, supported entirely on the
$π$-exceptional locus, such that $\wtilde H := π^*(K_Y)+E$ is ample.  Since
$\wtilde S$ and $\supp E$ are disjoint, the Higgs bundle
$(\sF_{\wtilde{S}}, Θ_{\wtilde{S}})$ is clearly semistable with respect to
$\wtilde H|_{\wtilde{S}}$.

Recall from \cite[Theorem~1.1]{Takayama2003} that the natural map of fundamental
groups, $π_1(\wtilde Y) → π_1(Y)$ is isomorphic.  Together with
Claim~\ref{claim:blackhorse}, this implies that
$π_1(\wtilde S) → π_1(\wtilde Y)$ is surjective, and induces an isomorphism of
profinite completions.  Items~\ref{il:garfunkel} and \ref{il:simon} of
Corollary~\ref{cor:pfccs} therefore allow to find a Higgs bundle
$(\sF_{\wtilde Y}, Θ_{\wtilde Y})$ on $\wtilde Y$ that restricts to
$(\sF_{\wtilde S}, Θ_{\wtilde{S}})$, and is induced by \pCVHS.  We have seen in
Remark~\ref{rem:charPCVHS} that all Chern classes of $\sF_{\wtilde Y}$ vanish.

\subsubsection*{Step~4: Identification of the \pCVHS}
\approvals{Behrouz & yes \\Daniel & yes \\Stefan & yes}

We have seen in Proposition~\ref{prop:higgsfromdownst} that $\sF_{\wtilde Y}$
comes from $Y$.  More precisely, there exists a locally free sheaf $\sF_Y$ on
$Y$ such that $\sF_{\wtilde Y} =π^*(\sF_Y)$.  First notice that $\sF_Y$ is a
member of the family ${\sf B}_r$ that was introduced in
Claim~\vref{claim:flatbounded}.  Item~\ref{il:dappledhorse} thus gives an
isomorphism $\sEnd \sE_Y \cong \sF_Y$, showing that $\sEnd \sE_Y$ is locally
free.  But $\sEnd \sE_Y$ contains $\sT_Y$ as a direct summand.  It follows that
$\sT_Y$ is locally-free and thus $Y$ is smooth by the partial solution of the
Zariski-Lipman problem for spaces with canonical singularities,
Theorem~\ref{thm:LZ:1}.  This finishes the proof of the smoothness criterion,
Proposition~\ref{prop:c3}.  \qed

\subsection{Proof of Theorem~\ref*{thm:BQ} in a simplified setting}
\label{ssec:prtmss}
\approvals{Behrouz & yes \\Daniel & yes \\Stefan & yes}

For the purposes of this survey, we prove Theorem~\ref{thm:BQ} only under the
following simplifying assumptions.  Section~\ref{ssec:pigc2} briefly discusses
the missing pieces for a proof in the general case.

\begin{assumption}\label{ass:simpl2}
  The canonical bundle $K_X$ is ample, and $X$ is therefore equal to its
  canonical model.
\end{assumption}

Recalling from Definition~\ref{def:minimal} that minimal varieties have terminal
singularities, we infer that $X$ is smooth in codimension two.  In particular,
Chern classes $c_1$ and $c_2$ exist.

Now consider a maximally quasi-étale cover $f : Y → X$, as given by
Theorem~\ref{thm:gfa:2}.  Since $f$ is unramified in codimension two we find
that $K_Y = f^*(K_X)$ is also ample and that $Y$ again has terminal
singularities, cf.~\cite[Proposition~1.2.13]{Laz04-I} and
\cite[Proposition~5.20]{KM98}.  Since $\sT_Y$ and $f^* \sT_X$ differ only along
a set of codimension three, the projection formula for Chern classes yields that
\begin{equation}
  \Bigl( 2(n+1)· c_2(\sT_Y)-n·c_1(\sT_Y)^2
  \Bigr)·[K_Y]^{n-2}= 0.
\end{equation}
In other words, equality holds in the Miyaoka-Yau inequality for $Y$.  In
particular, the smoothness criterion of Proposition~\ref{prop:c3} applies,
showing that $Y$ is smooth.  So, $Y$ is uniformised by the ball, thanks to the
original result of Yau, Theorem~\ref{thm:YausTheorem}.  This finishes the proof
of Theorem~\ref{thm:BQ} in the simplified setting of
Assumption~\ref{ass:simpl2}.  \qed

\subsection{Proof in the general case}
\label{ssec:pigc2}
\approvals{Behrouz & yes \\Daniel & yes \\Stefan & yes}

To prove Theorem~\ref{thm:BQ} in general, we show that the tangent sheaf of the
canonical model satisfies the equality in Miyaoka-Yau inequality, and that it is
smooth in codimension two.  This is a consequence of two computations with
orbifold Chern classes:

Let $π: X → X_{can}$ be the morphism from $X$ to its canonical
model.

\subsubsection*{The Miyaoka-Yau equality for $\sT_{X_{can}}$}
\approvals{Behrouz & yes \\Daniel & yes \\Stefan & yes}
\CounterStep

We claim that $\sT_{X_{can}}$ verifies the Miyaoka-Yau equality.  Reason: on the
one hand we know from Theorem~\ref{thm:MYinequality} that $\sT_{X_{can}}$
verifies the Miyaoka-Yau inequality.  On the other hand, Chern classes
calculations similar to \cite[Proposition~1.1]{SBW94} show that
\begin{equation}\label{eq:c2up}
  c_2\bigl(\sT_X\bigr)·[K_X]^{n-2}- \what{c}_2\bigl(\sT_{X_{can}}\bigr)·[K_{X_{can}}]^{n-2}
  = c_2\bigl(\sT_{\wtilde S}\bigr) - \widehat c_2\bigl(\sT_S\bigr) ≥ 0,
\end{equation}
where $\wtilde S$ is the birational transform of a complete intersection surface
$S = D_1 ∩ \cdots ∩ D_{n-2}$, for sufficiently general members $D_i$ of
$|m· K_{X_{can}}|$, for $m$ sufficiently large and divisible.  But this implies
that the MY discriminant for $\sT_{X_{can}}$ is bounded from the above by the
one for $\sT_X$.  In other words, the MY discriminant of $\sT_{X_{can}}$ is at
most zero.

\subsubsection*{The singularities of the canonical model}
\approvals{Behrouz & yes \\Daniel & yes \\Stefan & yes}

As the MY discriminant of $\sT_{X_{can}}$ is equal to zero, \eqref{eq:c2up}
implies that
\begin{equation}\label{eq:c2}
  c_2(\sT_{\wtilde S})=\widehat c_2(\sT_S).
\end{equation}
But as $X_{can}$ has only canonical singularities, every connected exceptional
divisor of $π|_{\wtilde S} : \wtilde S → S$ is a tree of $ℙ^1$s.  The fact that
$\what{c}_2$ computes the orbifold Euler characteristic of $S$ implies that the
equality can only hold if $S$ is smooth.  But if general complete intersections
surfaces are smooth, then $X_{can}$ needs to be smooth in codimension two.

%%% Local Variables:
%%% mode: latex
%%% TeX-master: "SLC"
%%% End:

\vspace{1cm}

\end{document}